\documentclass{amsart}
\usepackage{amsmath}
\usepackage{amsthm}
\usepackage{amssymb}
\usepackage[alphabetic]{amsrefs}
\usepackage{graphicx}
\usepackage{mathrsfs}
\usepackage{pinlabel}
\usepackage{tikz-cd}
\usepackage{svg}
\usepackage{xypic}
\usepackage{mathtools}
\usepackage{geometry}
\geometry{left=3.7cm,right=3.7cm}
\usepackage[utf8]{inputenc}
\usepackage[hidelinks]{hyperref}
\hypersetup{colorlinks,citecolor={red}}  
\usepackage{cleveref}
\usepackage[section]{placeins}
\usepackage{subcaption}

\newtheorem{thm}{Theorem}[section]
\newtheorem*{thm*}{Theorem}

\newtheorem{cor}[thm]{Corollary}
\newtheorem{prop}[thm]{Proposition}
\newtheorem{lem}[thm]{Lemma}

\theoremstyle{definition}
\newtheorem{defn}[thm]{Definition}

\newtheorem{rmk}[thm]{Remark}

\newtheorem{conv}[thm]{Convention}

\newtheorem*{claim}{Claim}

\newcommand{\Z}{\mathbb{Z}}

\newcommand{\R}{\mathbb{R}}

\newcommand{\FF}{\mathcal{F}}
\newcommand{\OO}{\mathcal{O}}
\newcommand{\hyp}{\mathbb{H}}

\newcommand{\Su}{\mathfrak{S}}

\newcommand{\UU}{\mathcal{U}}
\newcommand{\QQ}{\mathcal{Q}}
\newcommand{\MM}{\mathcal{M}}

\newcommand{\wt}{\widetilde}


\title{Depth-one foliations, pseudo-Anosov flows and universal circles}

\author{Junzhi Huang}
\date{\today}

\begin{document}

\begin{abstract}
Given a taut depth-one foliation $\FF$ in a closed atoroidal 3-manifold $M$ transverse to a pseudo-Anosov flow $\phi$ without perfect fits, we show that the universal circle coming from leftmost sections $\Su_\mathrm{left}$ associated to $\FF$, constructed by Thurston and Calegari-Dunfield, is isomorphic to the ideal boundary of the flow space associated to $\phi$ with natural structure maps. As a corollary, we use a theorem of Barthelm\'e-Frankel-Mann to show that there is at most one pseudo-Anosov flow without perfect fits transverse to $\FF$ up to orbit equivalence.
\end{abstract}

\maketitle

\section{Introduction}

There has been an important theme in 3-manifold topology to study the interaction between flows and codimension one foliations in 3-manifolds. The simplest examples of codimension one foliations of a 3-manifold $M$ are fibrations, which are exactly the foliations with all leaves compact. The theory of Thurston norm organizes different ways of fibration of $M$ into a finite number of fibered faces, and there is a one-to-one correspondence between fibered faces and isotopy classes of suspension pseudo-Anosov flows \cite{fried1979fibrations,Th86}. The aim of this paper is to study one of the next simplest classes of foliations, namely depth-one foliations, and their interaction with transverse pseudo-Anosov flows, by comparing the $\pi_1$-actions on $S^1$ that arise in both settings.

A foliation in a closed 3-manifold is called a \textbf{depth-one foliation} if the restriction to the complement of compact leaves is a fibration over the circle. Visually there are a finite number of compact leaves, called depth-zero leaves, and the rest of the leaves (namely the depth-one leaves) are infinite type surfaces spiraling into the depth-zero leaves. One way to construct depth-one foliations is to ``spin" a fibration around an embedded surface (see \cite[Example 4.8]{CalFoliations07}). 

Given a taut depth-one foliation $\FF$ in a closed 3-manifold $M$, a result of Candel \cite{Candel1993} shows that there exists a Riemannian metric on $M$ such that the restrictions on the leaves of $\FF$ are hyperbolic, giving every $\FF$-leaf a standard hyperbolic structure in the sense of \cite{CCHomot} (see also Section \ref{subsec:depth-1}). In particular, there is a natural circle at infinity associated to any leaf of $\FF$. An unpublished construction of Thurston \cite{ThuCircle2}, which was later written down by Calegari-Dunfield \cite{Calegari:2003aa}, produces a circle $\Su_\mathrm{left}$ associated to $\FF$. We will call this circle \textbf{the universal circle from leftmost sections}. The circle $\Su_\mathrm{left}$ is acted on by $\pi_1(M)$, and is equipped with a $\pi_1(M)$-equivariant collection of monotone \textbf{structure maps} $\{U_\lambda\}_{\lambda\in\wt{\FF}}$ to the circles at infinity of all $\wt{\FF}$-leaves, where $\wt{\FF}$ is the lift of $\FF$ to the universal cover $\wt{M}$ of $M$.

In general, there is an axiomatized notion of a universal circle associated to a taut foliation (Definition \ref{def:universal-circle}). The universal circle $\Su_\mathrm{left}$ is a universal circle of $\FF$ in this general sense, but not a canonical one. However, when $\FF$ is a taut depth-one foliation transverse to a pseudo-Anosov flow without perfect fits $\phi$ (see Section \ref{sec:preliminaries} for definitions), we will see that it is possible to relate the $\Su_\mathrm{left}$ to a more natural object, which is the ideal boundary of the flow space of $\phi$.

For a pseudo-Anosov flow $\phi$, the \textbf{flow space} $\OO$ associated to $\phi$ is the space of orbits of the lifted flow $\wt{\phi}$ in $\wt{M}$. It is homeomorphic to $\R^2$ by \cite{BarbotAnosov1995, FenleyAnosov1994, FENLEY2001503}, and there is a compactification $\overline{\OO}=\OO\cup\partial\OO$ given by Fenley \cite{Fenley2005IdealBO}. The ideal boundary $\partial\OO$ is homeomorphic to a circle and the $\pi_1(M)$-action on $\OO$ extends continuously to $\partial\OO$. If $\phi$ has no perfect fits, we show that the shadow of any leaf $\lambda$ of $\wt{\FF}$ provides a natural map $I_\lambda$ from $\partial\OO$ to the circle at infinity of $\lambda$ (see Section \ref{sec:shadows}).

\begin{thm}\label{thm:ideal-boundary}
Let $M$ be a closed atoroidal 3-manifold with a pseudo-Anosov flow $\phi$ without perfect fits, and let $\FF$ be a taut depth-one foliation in $M$ transverse to $\phi$. Then the circle $\partial\OO$, together with the structure maps $\{I_\lambda\}_{\lambda\in\wt{\FF}}$, is a universal circle for $\FF$.
\end{thm}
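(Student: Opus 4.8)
The plan is to check the pair $(\partial\OO,\{I_\lambda\}_{\lambda\in\wt\FF})$ directly against the axioms of a universal circle, Definition \ref{def:universal-circle}. The $\pi_1(M)$-action demanded there is precisely Fenley's action on $\partial\OO$ recalled in the introduction, so the substance is threefold: (i) every shadow map $I_\lambda\colon\partial\OO\to S^1_\infty(\lambda)$ is a monotone surjection; (ii) the family $\{I_\lambda\}$ is $\pi_1(M)$-equivariant, $I_{\gamma\lambda}\circ\gamma=\gamma\circ I_\lambda$ for $\gamma\in\pi_1(M)$; and (iii) the $I_\lambda$ are mutually consistent along the leaf space of $\wt\FF$ — their gaps must be correctly linked for incomparable leaves and must nest compatibly with the comparison maps $S^1_\infty(\lambda)\to S^1_\infty(\mu)$ of $\FF$ for comparable leaves $\lambda\le\mu$.

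Items (i) and (ii) rest on the analysis of shadows in Section \ref{sec:shadows}. There one sees that transversality of $\FF$ to $\phi$ together with the absence of perfect fits makes each leaf $\lambda$ of $\wt\FF$ a section over an open subset of the flow space $\OO$, and that tracking an orbit approaching a point of $\partial\OO$ — projecting it into this subset when necessary — produces a continuous monotone surjection $I_\lambda$. Equivariance is then a formality: a deck transformation $\gamma$ carries the section region of $\lambda$ to that of $\gamma\lambda$ and intertwines the boundary identifications used to build the shadow maps, so $I_{\gamma\lambda}\circ\gamma=\gamma\circ I_\lambda$ holds by naturality of the construction.

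The real content, and the step I expect to be the main obstacle, is (iii): reconciling two a priori unrelated structures on the circles at infinity of leaves — the comparison maps $\phi_{\mu\lambda}$ of $\FF$, built from the hyperbolic (Candel) geometry of the leaves and holonomy along transversals, and the identifications induced by the flow and packaged in the shadows. The strategy is to fix a transversal from $\lambda$ to $\mu$, invoke no perfect fits to show that the shadows of the intermediate leaves sweep monotonically through $\partial\OO$ without folding, and then match this monotone family end-by-end with $\phi_{\mu\lambda}$. The delicate case is near the depth-zero leaves: a depth-one leaf spirals into them, so infinitely many of its ends — hence infinitely many arcs of $\partial\OO$ appearing in the relevant gap structures — accumulate there, and one must check that the shadow picture degenerates exactly as the spiralling prescribes, so that the comparison maps still agree in the limit. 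This is where the depth-one hypothesis, rather than mere tautness, is essential, together with further use of no perfect fits; the linking condition for incomparable leaves is by comparison routine, following from the fact that the shadows of incomparable leaves have unlinked frontier arcs in $\partial\OO$. Combining these with the cocycle identity for comparison maps along composable transversals verifies Definition \ref{def:universal-circle}, proving the theorem.
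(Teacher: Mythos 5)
Your verification of what Definition \ref{def:universal-circle} actually demands matches the paper's (very short) proof: monotonicity and equivariance of the $I_\lambda$ are built into the construction of Section \ref{sec:shadows} (Lemmas \ref{lem:c-continuous-extension} and \ref{lem:nc-continuous-extension}), and condition (2) for incomparable leaves follows because incomparable leaves have disjoint shadows --- an orbit of $\wt{\phi}$ meeting both would make them comparable --- so the core of $I_\lambda$, being the ideal trace of $\overline{p(\lambda)}$, sits inside a single gap of $I_\mu$. Your ``unlinked frontier arcs'' phrasing is a consequence of this disjointness, and that part of your argument is fine.

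The problem is with what you identify as ``the real content.'' Definition \ref{def:universal-circle} contains no compatibility condition between the structure maps and comparison maps $\partial_\infty\lambda\to\partial_\infty\mu$ for \emph{comparable} leaves; its only coherence axiom is the gap condition for incomparable leaves. So the bulk of your proposal --- reconciling the shadow identifications with holonomy/Candel comparison maps near the spiralling into depth-zero leaves, which you moreover only sketch as a strategy rather than prove --- is addressed to a requirement that is not part of the statement. Had such a condition been required, your proposal would have a genuine gap there, since you give no actual argument for the matching ``in the limit'' at the compact leaves. As it stands the theorem is much softer than you think: the hard compatibility between the flow-induced structure on $\partial\OO$ and the intrinsic structure of the foliation (markers, leftmost sections) is exactly the content of Theorem \ref{thm:main}, not of Theorem \ref{thm:ideal-boundary}, and in the paper it occupies Sections \ref{sec:universal-circle} and \ref{sec:homeomorphism} rather than this proof.
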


In work in progress, Landry-Minsky-Taylor show that given a taut foliation $\FF$ almost transverse to a pseudo-Anosov flow $\phi$ in a closed hyperbolic manifold $M$, the boundary of the flow space naturally has the structure of a universal circle for $\FF$, which is a much stronger version of Theorem \ref{thm:ideal-boundary}.

Nevertheless, we show in our setting that the universal circles $\partial\OO$ and $\Su_{\mathrm{left}}$ are isomorphic. More precisely, we have the following theorem.

\begin{thm}\label{thm:main}

Let $M$ be a closed atoroidal 3-manifold with a pseudo-Anosov flow $\phi$ without perfect fits, and let $\FF$ be a taut depth-one foliation in $M$ transverse to $\phi$. Then the $\pi_1(M)$-actions on $\partial\OO$ and on $\Su_\mathrm{left}$ are conjugated by a homeomorphism $T:\Su_\mathrm{left}\to\partial\OO$. Moreover, for any leaf $\lambda$ of $\wt{\FF}$, we have $I_\lambda\circ T=U_\lambda$.

\end{thm}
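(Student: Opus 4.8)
Our strategy is to build the conjugating homeomorphism $T$ directly out of the two families of structure maps. Since, by Theorem~\ref{thm:ideal-boundary}, $(\partial\OO,\{I_\lambda\})$ is a universal circle for $\FF$, and $(\Su_\mathrm{left},\{U_\lambda\})$ is one by the Thurston--Calegari--Dunfield construction, it is natural to declare $T(x)$ to be the point of $\partial\OO$ whose images under the maps $I_\lambda$ agree with the images of $x$ under the maps $U_\lambda$; that is, $T$ should be characterized by the identity $I_\lambda\circ T=U_\lambda$ for every leaf $\lambda$ of $\wt\FF$. Granting that such a $T$ exists and is a bijection, everything else is formal: $T$ is monotone because it is assembled from the monotone maps $\{U_\lambda\}$ and $\{I_\lambda\}$, hence a bijective monotone map of circles and therefore a homeomorphism; and $T$ is $\pi_1(M)$-equivariant because $\pi_1(M)$-equivariance of the two families of structure maps forces $gTg^{-1}$ to satisfy $I_\lambda\circ(gTg^{-1})=U_\lambda$ as well, so $gTg^{-1}=T$ by uniqueness. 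Thus the content of the theorem is precisely that this matching of structure-map data is well defined and bijective.

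To establish this I would first compare, for each ordered pair of comparable leaves $\lambda<\mu$ of $\wt\FF$, the ``comparison map'' relating $S^1_\infty(\lambda)$ and $S^1_\infty(\mu)$ that is implicit in the leftmost-section construction with the one coming from the flow --- namely the map induced by the embeddings of $\lambda$ and $\mu$ into the orbit space $\OO$ together with the shadow maps of Section~\ref{sec:shadows}. The key lemma is that these two comparison maps coincide, so that a leftmost section $\lambda\mapsto U_\lambda(x)$ is in particular a section compatible with the flow comparison maps and is therefore realized by $\lambda\mapsto I_\lambda(y)$ for some $y\in\partial\OO$; conversely, for $y\in\partial\OO$ the section $\lambda\mapsto I_\lambda(y)$ is compatible with those comparison maps, so its leftmost representative is a genuine leftmost section, i.e. equals $\lambda\mapsto U_\lambda(x)$ for some $x\in\Su_\mathrm{left}$. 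This produces $T$ and its surjectivity, once one also matches, on each side, the two endpoints of a gap of $\Su_\mathrm{left}$ with the two endpoints of the corresponding gap of $\partial\OO$.

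For injectivity of $T$ I would show that the family $\{I_\lambda\}_{\lambda\in\wt\FF}$ separates points of $\partial\OO$: if $y\ne y'$ then $I_\lambda(y)\ne I_\lambda(y')$ for some leaf $\lambda$; equivalently, $\partial\OO$ with its structure maps is already the minimal universal circle of $\FF$. This is where the hypotheses enter essentially: tautness together with the depth-one structure guarantees that the leaves of $\wt\FF$ --- the planes covering depth-one leaves and their limits --- project into $\OO$ so as to exhaust all directions at infinity, while the absence of perfect fits guarantees that the shadow of a leaf is a genuine closed interval of $\partial\OO$ with the expected endpoint behaviour, so that as $\lambda$ varies the partitions of $\partial\OO$ by the fibers of the $I_\lambda$ become arbitrarily fine. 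Injectivity of $T$ on the $\Su_\mathrm{left}$ side is built into the construction, since distinct points of $\Su_\mathrm{left}$ are distinct leftmost sections and hence disagree at some leaf. The identity $I_\lambda\circ T=U_\lambda$ then holds by construction, finishing the proof. The main obstacle is the comparison of the two constructions of structure maps --- showing that ``turning leftmost through the leaves of $\wt\FF$'' agrees with ``tracking a point of $\partial\OO$ through the orbit space'' --- together with the separation statement for $\partial\OO$; this requires a delicate analysis near the ideal boundary and is exactly the place where the no-perfect-fits hypothesis is used.
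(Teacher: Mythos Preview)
Your strategy---characterize $T$ by $I_\lambda\circ T=U_\lambda$ and then verify well-definedness and bijectivity---is the paper's as well (Corollary~\ref{cor:intersection} establishes precisely $T(s)=\bigcap_\lambda I_\lambda^{-1}(s(\lambda))$). But two of your steps do not work as written.

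Your surjectivity argument fails. You assert that for $y\in\partial\OO$ the section $\lambda\mapsto I_\lambda(y)$ has a ``leftmost representative'' in $LS$. But $\lambda\mapsto I_\lambda(y)$ is in general \emph{not} a leftmost section, and there is no canonical way to attach one to it: when a leftmost section passes from a type-0 leaf $\mu$ into an adjacent product region it lands on one \emph{specific} endpoint of the interval $I_{\wt\Omega\mu}^{-1}(\eta)$ (Lemma~\ref{lem:0-to-1}), so $T|_{LS}$ hits only such distinguished endpoints and misses most of $\partial\OO$. The paper instead shows that $T$ preserves the cyclic order on $LS$ and has dense image by minimality of the $\pi_1(M)$-action on $\partial\OO$, and then uses that $\Su_\mathrm{left}$ is the order-completion of $LS$ to obtain the homeomorphism. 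Your injectivity claim that ``distinct points of $\Su_\mathrm{left}$ are distinct leftmost sections'' has the same defect: points added in the completion need not correspond to sections at all, and it is not automatic that the $U_\lambda$ separate them.

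Your ``key lemma'' is correctly identified as the heart of the matter, but you give no mechanism. Leftmost sections are defined through \emph{markers}, so one must show that the markers crossing a type-0 leaf $\mu$ are governed by $I_{\wt\Omega\mu}$. This is the content of Lemmas~\ref{lem:trivial-holonomy}--\ref{lem:0-to-1}: one produces explicit markers from simple closed curves on the compact leaf with trivial holonomy (equivalently, from $\Sigma$-junctures on the depth-one leaf), identifies the closure of their endpoints with the limit set of the escaping set $\wt\UU^+_\mu$ via Handel--Miller theory, and concludes that the leftmost-up/rightmost-down rule across $\mu$ is exactly the rule dictated by $I_{\wt\Omega\mu}$. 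This is where the depth-one structure is used essentially, and it is not a formal consequence of both circles satisfying the universal-circle axioms.
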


\begin{cor}\label{cor:orbit-equivalent}
Let $M$ be a closed atoroidal 3-manifold and let $\FF$ be a taut depth one foliation in $M$. Then there is at most one pseudo-Anosov flow without perfect fits transverse to $\FF$ up to orbit equivalence.
\end{cor}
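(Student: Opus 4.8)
The plan is to deduce Corollary~\ref{cor:orbit-equivalent} by combining Theorem~\ref{thm:main} with the rigidity result of Barthelm\'e--Frankel--Mann. Suppose $\phi_1$ and $\phi_2$ are two pseudo-Anosov flows without perfect fits, both transverse to $\FF$. Applying Theorem~\ref{thm:main} to each of them, we obtain $\pi_1(M)$-equivariant homeomorphisms $T_i\colon \Su_\mathrm{left}\to\partial\OO_i$ intertwining the structure maps, where $\OO_i$ is the flow space of $\phi_i$. The key observation is that $\Su_\mathrm{left}$ depends only on $\FF$ and not on the flow; hence the composition $T_2\circ T_1^{-1}\colon \partial\OO_1\to\partial\OO_2$ is a $\pi_1(M)$-equivariant homeomorphism conjugating the boundary action of $\phi_1$ to that of $\phi_2$.

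Next I would invoke the classification theorem of Barthelm\'e--Frankel--Mann, which says that a pseudo-Anosov flow on an atoroidal (or more generally, a suitable) $3$-manifold is determined up to orbit equivalence by the action of $\pi_1(M)$ on the boundary of its flow space (equivalently, by its ``orbit space action'' or the associated bifoliated plane data). One has to be slightly careful here about exactly what invariant their theorem takes as input: their rigidity statement is typically phrased in terms of the action on the orbit space $\OO$ together with its pair of singular foliations, or in terms of the set of non-separated leaves / the ``JSJ-like'' data, rather than merely the circle action $\partial\OO$. So the real content of this step is to check that the conjugacy $T_2\circ T_1^{-1}$ of circle actions can be upgraded to (or already encodes) the data their theorem requires --- for instance, that it carries the ideal endpoints of stable/unstable leaves of $\phi_1$ to those of $\phi_2$. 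This should follow because the structure maps $I_\lambda$ detect the stable/unstable prongs through their behavior on the shadows of $\wt\FF$-leaves, and $T_2\circ T_1^{-1}$ respects all the $I_\lambda$; alternatively one appeals to the fact, used already in proving Theorem~\ref{thm:main}, that $\partial\OO$ together with its $\pi_1$-action determines the bifoliated structure.

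The main obstacle I anticipate is precisely this bookkeeping: ensuring that ``conjugate as $\pi_1(M)$-circle actions, compatibly with the foliation structure maps'' is exactly the hypothesis of the Barthelm\'e--Frankel--Mann theorem, including the atoroidality assumption (which is needed so that there are no $\Z^2$ subgroups forcing degenerate behavior, and so that no perfect fits can be created). Once that identification is made, orbit equivalence of $\phi_1$ and $\phi_2$ follows immediately, and since $\phi_1,\phi_2$ were arbitrary transverse pseudo-Anosov flows without perfect fits, we conclude there is at most one such flow up to orbit equivalence. I would also remark that the atoroidal hypothesis guarantees that ``transverse to $\FF$'' plus ``no perfect fits'' is a nonvacuous and well-behaved condition, so the statement is exactly as sharp as the input theorems allow.
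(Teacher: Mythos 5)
Your proposal follows the same route as the paper: apply Theorem~\ref{thm:main} to both flows, use that $\Su_\mathrm{left}$ depends only on $\FF$ to conjugate the two boundary actions, and then invoke Barthelm\'e--Frankel--Mann. Two remarks. First, the obstacle you anticipate --- that the rigidity theorem might require the full bifoliated-plane data rather than just the circle action --- is not actually an issue: the result the paper cites (Theorem~1.5 of \cite{Bart2022orbit}) takes as input precisely a conjugacy of the $\pi_1$-actions on the ideal boundaries of the orbit spaces, so no upgrading of $T_2\circ T_1^{-1}$ is needed. Second, and this is the genuine gap in your write-up, that theorem is stated for \emph{transitive} pseudo-Anosov flows, and you never verify transitivity. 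The paper closes this by observing that since $M$ is atoroidal, any pseudo-Anosov flow on $M$ is transitive (by Mosher, \cite{MoshDynHomI}); without this step the application of the rigidity theorem is not justified. With that one sentence added, your argument matches the paper's proof.
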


\begin{proof}
Suppose there are two pseudo-Anosov flows without perfect fits $\phi$ and $\varphi$ that are transverse to $\FF$. Since $M$ is atoroidal, both $\phi$ and $\varphi$ are transitive (we say a flow is \textbf{transitive} if it has an orbit that is dense in both positive and negative time) by \cite{MoshDynHomI}. The actions of $\pi_1(M)$ on the ideal circles of the orbit spaces of $\phi$ and $\varphi$ are conjugate by Theorem \ref{thm:main}. By \cite[Theorem 1.5]{Bart2022orbit}, $\phi$ and $\varphi$ are orbit equivalent.
\end{proof}

A conjectural picture of ``pseudo-Anosov packages" is developed in \cite{Calegari2002PromotingEL} by Calegari in the hope that the different structures from taut foliations, laminations, universal circles and pseudo-Anosov flows are organized and compatible in the most natural way, and it is asked to what extent the picture is true.

In particular, given a universal circle $\Su$, he constructs a pair of $\pi_1(M)$-invariant laminations $\Xi^\pm$ on $\Su$. In our case, one can apply the construction to the universal circle $\Su_\mathrm{left}\cong\partial\OO$ and get a pair of laminations on $\partial\OO$. On the other hand, the endpoints of the singular foliations $\FF_\OO^{u/s}$ also induce a pair of laminations $\mathcal{L}_\OO^{u/s}$ on $\partial\OO$ by taking the pairs of endpoints of regular leaves and faces of singular leaves. We partially verify Calegari's picture by showing that $\Xi^\pm$ and $\mathcal{L}_\OO^{u/s}$ coincide.

\begin{thm}\label{thm:invariant-laminations}
In the setting of Theorem \ref{thm:main}, the invariant lamination $\Xi^+$ (resp. $\Xi^-$) on the universal circle $\Su_\mathrm{left}$ equals the induced stable lamination $\mathcal{L}_\OO^{s}$ (resp. $\mathcal{L}_\OO^{u}$) under the isomorphism $T:\Su_\mathrm{left}\to\partial\OO$.
\end{thm}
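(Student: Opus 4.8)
The plan is to unwind Calegari's construction of $\Xi^\pm$ directly in the flow-space model, using the identification furnished by Theorem \ref{thm:main}. First I would recall that, for any taut foliation, $\Xi^+$ and $\Xi^-$ are produced from a universal circle $\Su$ out of only two pieces of data: the monotone structure maps to the ideal circles of the leaves of $\wt\FF$, and the transverse coorientation of $\FF$, which orders the leaf space of $\wt\FF$ and distinguishes ``positive'' turning of leftmost sections (yielding $\Xi^+$) from ``negative'' turning (yielding $\Xi^-$). The homeomorphism $T\colon\Su_\mathrm{left}\to\partial\OO$ of Theorem \ref{thm:main} is $\pi_1(M)$-equivariant and satisfies $I_\lambda\circ T=U_\lambda$; since $\FF$ is transverse to $\phi$, its coorientation is precisely the one induced by the flow direction. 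Hence $T$ carries all the data feeding Calegari's construction on $\Su_\mathrm{left}$ to the corresponding data on $\partial\OO$, and it suffices to show that running the construction on the pair $(\partial\OO,\{I_\lambda\})$ returns $\mathcal{L}_\OO^{s}$ from the positive direction and $\mathcal{L}_\OO^{u}$ from the negative direction. (One could alternatively try to isolate an intrinsic characterization of $\Xi^\pm$ among $\pi_1(M)$-invariant laminations on a universal circle and verify it for $\mathcal{L}_\OO^{s/u}$, but I will unwind the construction directly.)

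The heart of the argument is then a concrete description of the maps $I_\lambda$ via the shadows of Section \ref{sec:shadows}. The shadow $\mathrm{Sh}(\lambda)\subseteq\OO$ of a leaf $\lambda$ of $\wt\FF$ is an open disk, and because $\phi$ has no perfect fits its frontier in $\OO$ is a disjoint union of leaves of $\FF_\OO^{s}$ and $\FF_\OO^{u}$, with corners only at the singular orbits where prongs of a singular leaf meet; behind each such frontier leaf $\ell$ the map $I_\lambda$ collapses exactly the arc of $\partial\OO$ cut off by the ideal endpoints of $\ell$, and is otherwise essentially injective. Thus the boundary chords of the gaps of $I_\lambda$ are precisely the chords and ideal-polygon sides of $\mathcal{L}_\OO^{s}\cup\mathcal{L}_\OO^{u}$ that span frontier leaves of $\mathrm{Sh}(\lambda)$, and each such chord is intrinsically labelled stable or unstable by the type of its frontier leaf. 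The key point is that this labelling agrees with Calegari's positive/negative dichotomy: transversality of $\FF$ to $\phi$ splits $\partial\mathrm{Sh}(\lambda)$ coherently into a ``forward'' part made of leaves of $\FF_\OO^{s}$ and a ``backward'' part made of leaves of $\FF_\OO^{u}$, and as one moves up through the leaf space of $\wt\FF$ (in particular across its branch points) the shadow opens out across unstable frontier leaves while acquiring new stable frontier leaves, so that the positive turning of the leftmost sections collects exactly the stable chords. I would extract this correspondence from the analysis of leftmost sections and shadows in the proof of Theorem \ref{thm:main}.

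Finally I would verify completeness and closedness. Since $\FF$ is transverse to $\phi$ everywhere, the shadows $\{\mathrm{Sh}(\lambda)\}_{\lambda\in\wt\FF}$ cover $\OO$; a nesting/limiting argument, again using the no-perfect-fits hypothesis to control how shadows degenerate, shows that every regular leaf of $\FF_\OO^{s}$ and every side of every singular stable leaf occurs in the frontier of some $\mathrm{Sh}(\lambda)$. Therefore the closure of the union over all $\lambda$ of the stable boundary chords is exactly $\mathcal{L}_\OO^{s}$ --- and nothing more, since $\mathcal{L}_\OO^{s}$ is closed and any limit of pairs of endpoints of stable leaves is again of this form --- and likewise for the unstable chords and $\mathcal{L}_\OO^{u}$. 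Combined with the reduction above, this gives $T(\Xi^+)=\mathcal{L}_\OO^{s}$ and $T(\Xi^-)=\mathcal{L}_\OO^{u}$.

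The step I expect to be the main obstacle is the sign bookkeeping of the middle paragraph: making precise how the transverse orientation of $\FF$ sorts the frontier of a shadow into a stable and an unstable part, and checking that this sorting matches the positive/negative turning built into $\Xi^\pm$, so that one genuinely obtains $\Xi^+=\mathcal{L}_\OO^{s}$ rather than its mirror. A secondary technical point is the uniform treatment of singular orbits --- confirming that a $p$-prong singular orbit contributes the same ideal $p$-gon to $\Xi^+$ (resp. $\Xi^-$) as to $\mathcal{L}_\OO^{s}$ (resp. $\mathcal{L}_\OO^{u}$) --- together with the routine but necessary verification that Calegari's construction depends only on the data transported by $T$, so that ``applying it on $\partial\OO$'' is unambiguous.
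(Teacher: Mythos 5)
Your overall architecture matches the paper's: identify the gaps of $I_\lambda$ with the sides of the shadow $p(\lambda)$, observe that moving in the positive flow direction the shadows only open up across unstable sides so that stable sides survive as supporting chords of $\bigcup_{\mu\in\Lambda^+(\lambda)}\mathrm{core}(I_\mu)$, and conclude by a density argument. (The paper carries out the first two steps by re-running the interval-tracking machinery from the proof of Theorem \ref{thm:main}, exactly as you propose to do.) However, there are two genuine problems. The more serious one is your justification of the inclusion $\Xi^+\subseteq\mathcal{L}_\OO^{s}$: you argue that ``the closure of the union of the stable boundary chords is exactly $\mathcal{L}_\OO^{s}$ --- and nothing more, since $\mathcal{L}_\OO^{s}$ is closed.'' But $\Xi^+$ is not defined as the closure of the union of stable boundary chords; each $\Xi^+(\lambda)$ is the boundary of a convex hull, and such a boundary can a priori contain chords that are not limits of shadow sides at all, namely diagonals of the complementary ideal polygons of $\mathcal{L}_\OO^{s}$ coming from singular stable leaves. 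Ruling these out is not the ``routine'' singular-orbit bookkeeping you describe; it is roughly half of the paper's proof. The paper supposes such a diagonal $d$ lies in some $\Xi^+(\lambda)$, shows (using the no-perfect-fits hypothesis) that the approximating shadow sides must be stable, deduces that the singular point $s$ of the corresponding leaf is accumulated by shadows of leaves in $\Lambda^+(\lambda)$, and then derives a contradiction from a case analysis on the position of the orbit $p^{-1}(s)$ relative to $\lambda$. Your proposal contains no substitute for this argument.

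The second problem is the density step. You assert that every regular leaf of $\FF_\OO^{s}$ occurs as a frontier leaf of some shadow. This is false: by Proposition \ref{prop:local-dynamics} and Lemma \ref{lem:nc-shadow}, the sides of shadows are \emph{periodic} leaves (they carry $\Z$-stabilizers), and a generic stable leaf is not periodic. The correct route, which the paper takes, is to fix one stable side $e$ of one shadow, note that $\Xi^+$ is closed and $\pi_1(M)$-invariant, and use transitivity of $\phi$ (hence density of the stable leaves, hence density of the $\pi_1(M)$-orbit of $\partial e$ in $\mathcal{L}_\OO^{s}$) to conclude $\mathcal{L}_\OO^{s}\subseteq\Xi^+$. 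With these two repairs your outline would align with the paper's proof.
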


The organization of the paper is as follows. In Section \ref{sec:preliminaries}, we briefly recall some knowledge about depth-one foliations and pseudo-Anosov flows, together with a description of the leaf space of a depth-one foliation. In Section \ref{sec:shadows}, we summarize the structure of the shadows of leaves of $\FF$ in $\OO$ developed by Cooper-Long-Reid \cite{Cooper:1994aa} and Fenley \cite{Fenley1999823}, and carefully study the infinity of shadows. From there we introduce the restriction maps $I_\lambda$ and a relative version of restriction maps. We start Section \ref{sec:universal-circle} with a brief review of the construction of the universal circle from leftmost sections $\Su_\mathrm{left}$ following \cite{Calegari:2003aa}, and we relate $\Su_\mathrm{left}$ to the universal circle structure of $\partial\OO$ we developed in Section \ref{sec:shadows}. We prove Theorem \ref{thm:main} in Section \ref{sec:homeomorphism} by explicitly constructing the homeomorphism $T$ and proving the desired properties. We conclude with a discussion of invariant laminations and the proof of Theorem \ref{thm:invariant-laminations} in Section \ref{sec:invariant-laminations}.

\subsection*{Acknowledgements} The author is grateful to his advisor Yair Minsky for being inspiring and supportive throughout this project. The author would like to thank Hyungryul Baik, Danny Calegari, Sergio Fenley, Michael Landry and Sam Taylor for helpful comments and conversations. The author is partially supported by NSF grant DMS-2005328.

\section{Preliminaries}\label{sec:preliminaries}

\begin{conv}
We fix a closed Riemannian atoroidal 3-manifold $M$ with the Riemannian metric to be determined later.
For a partition $\Theta$ (eg. a flow, a foliation) of a space $X$ and a point $x\in X$, we let $\Theta(x)$ be the atom of $\Theta$ containing $x$. More generally, if $A$ is a subset of $C$, we use $\Theta(A)$ to denote the saturation of $A$ by $\Theta$-atoms.
\end{conv}

\subsection{Pseudo-Anosov flows}\label{subsec:pA}

We refer to \cite{MoshDynHomI}, \cite{FENLEY2001503} and \cite{ATDynamics2021} for detailed discussions of pseudo-Anosov flows in 3-manifolds. The following definition follows \cite{Fenley2005IdealBO}.

A flow $\phi:M\times\R\to M$ in $M$ is a \textbf{pseudo-Anosov flow} if it has the following properties:

\begin{itemize}
\item each flowline is $C^1$ and not a single point;
\item the tangent line bundle $T\phi$ is continuous;
\item there are a finite number of singular closed orbits, and a pair of 2-dimensional singular foliations $\FF^u$ and $\FF^s$ in $M$ so that:
\begin{itemize}
\item each leaf of $\FF^u$ or $\FF^s$ is a union of $\phi$-orbits;
\item outside of the singular orbits $\FF^u$ and $\FF^s$ are regular foliations whose leaves intersect transversely along $\phi$-orbits;
\item for each singular orbit $\omega$, the leaf of $\FF^u$ or $\FF^s$ containing $\omega$ is homeomorphic to $P_n\times[0,1]/f$ where
\[
P_n=\{re^{\frac{2ki\pi}{n}}|r\geq0, 0\leq k\leq n-1\}\subset\mathbb{C}
\]
is an $n$-prong and $f$ is a homeomorphism from $P_n$ to $P_n$. The orbit $\omega$ is the image of $\{0\}\times [0,1]$ and $n$ is always greater than 2 in our case;
\item orbits in the same $\FF^s$-leaf are forward asymptotic, and orbits in the same $\FF^u$-leaf are backward asymptotic.
\end{itemize} 
\end{itemize}
The singular foliations $\FF^{s}$ and $\FF^{u}$ are called the \textbf{stable foliation} and the \textbf{unstable foliation} of $\phi$ respectively. When the set of singular orbits is empty, $\phi$ is simply an Anosov flow.

Fix a universal cover $\wt{M}$ of $M$ and let $\wt{\phi},\wt{\FF}^u, \wt{\FF}^s$ be the lift of $\phi, \FF^u, \FF^s$ to $\wt{M}$ respectively. The quotient of $\wt{M}$ by $\wt{\phi}$ is the \textbf{flow space} $\OO$, which is homeomorphic to $\R^2$ \cite{BarbotAnosov1995, FenleyAnosov1994, FENLEY2001503}. We orient $\OO$ so that the coorientation coincides with the flow direction, and the pictures are drawn so that the flow is flowing towards the reader. The deck transformation on $\wt{M}$ descends to an orientation preserving $\pi_1(M)$-action on $\OO$, and the singular foliations $\wt{\FF}^u$ and $\wt{\FF}^s$ descend to a pair of $\pi_1(M)$-invariant transverse singular foliations on $\OO$, denoted by $\FF_\OO^u$ and $\FF_\OO^s$. The singular leaves of $\FF_\OO^s$ and $\FF_\OO^u$ are $n$-pronged with $n\geq 3$. The union of two adjacent prongs in a singular leaf is called a \textbf{face}. A leaf of $\FF_\OO^{s}$ or $\FF_\OO^{u}$ is called a periodic leaf if it contains the image of a periodic orbit. In particular, singular leaves are periodic. A point in $\OO$ corresponds to a periodic orbit if and only if it is fixed by a non-trivial element $\gamma\in\pi_1(M)$.

A \textbf{ray} of $\FF_\OO^u$ or $\FF_\OO^s$ is an embedded closed half line contained in a leaf with the interior disjoint from singularities. Two rays $l\in\FF_\OO^s$ and $l'\in\FF_\OO^u$ are said to form a \textbf{perfect fit} if there is an (possibly orientation-reversing) embedding
\[
\iota:[0,1]\times[0,1]-(1,1)\to\OO
\]
mapping horizontal lines to $\FF_\OO^s$ leaves, vertical lines to $\FF_\OO^u$ leaves, $[0,1)\times\{1\}$ to $l'$ and $\{1\}\times[0,1)$ to $l$. We say a pseudo-Anosov flow is \textbf{without perfect fits} if no two rays in $\FF_\OO^s$ and $\FF_\OO^u$ form a perfect fit. The notion of perfect fits is introduced and studied by Fenley in \cite{Fenley1998BranchAnosov,Fenley1999FolGoodGeom}. In particular, we have the following lemma which is an immediate consequence of \cite[Theorem 4.8]{Fenley1999FolGoodGeom}.

\begin{lem}\label{lem:stabilizer}
If $\phi$ has no perfect fits, then any non-trivial element of $\pi_1(M)$ has at most one fixed point in $\OO$.
\end{lem}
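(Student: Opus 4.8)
The plan is to deduce the lemma directly from Fenley's description of fixed-point sets of deck transformations acting on $\OO$, namely the cited \cite[Theorem 4.8]{Fenley1999FolGoodGeom}; the proof then amounts to unwinding the definition of a lozenge.

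First I would record that any point $p\in\OO$ fixed by a non-trivial $\gamma\in\pi_1(M)$ is a periodic point. Lifting to $\wt M$, the element $\gamma$ preserves the flowline $\wt\phi(\tilde p)$ setwise, and since a deck transformation preserves $\wt\phi$ together with its orientation it translates along this flowline; hence the image of the flowline in $M$ is a closed orbit $\alpha$, and $\gamma$ lies in the infinite cyclic subgroup of $\pi_1(M)$ stabilizing $\tilde p$. In particular $\gamma$ preserves the leaves $\FF_\OO^{s}(p)$ and $\FF_\OO^{u}(p)$, permuting their prongs if $p$ is singular.

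Next I would recall the notion of a lozenge in $\OO$: a region with two distinct corners $p$ and $q$ whose boundary is made of a stable half-leaf and an unstable half-leaf of $\FF_\OO^{s}$ and $\FF_\OO^{u}$ issuing from $p$ together with a stable half-leaf and an unstable half-leaf issuing from $q$, arranged so that the stable ray from $p$ and the unstable ray from $q$ form a perfect fit and, symmetrically, the unstable ray from $p$ and the stable ray from $q$ form a perfect fit. The key observation is immediate: no lozenge can exist when $\phi$ has no perfect fits. With this in hand, suppose for contradiction that some non-trivial $\gamma$ fixes two distinct points $p\neq q$ in $\OO$. By the previous paragraph both are periodic points, and by \cite[Theorem 4.8]{Fenley1999FolGoodGeom} the points $p$ and $q$ are joined by a finite chain of lozenges, each corner of which is fixed by $\gamma$. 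Since $p\neq q$, this chain contains at least one lozenge, which exhibits a perfect fit of $\phi$ --- contradicting the hypothesis. Therefore $\gamma$ has at most one fixed point in $\OO$.

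I do not expect a genuine obstacle, since the entire mathematical content lies in Fenley's theorem; the only care needed is bookkeeping: checking that the corners of the lozenges in the chain, which may be singular points of $\FF_\OO^{s}$ or $\FF_\OO^{u}$, are actually fixed rather than merely permuted by $\gamma$, and that orientation-preservation of deck transformations prevents $\gamma$ from reversing a flowline. Both are routine.
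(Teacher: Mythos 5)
Your proposal is correct and takes the same route as the paper, which gives no separate argument but simply declares the lemma an immediate consequence of \cite[Theorem 4.8]{Fenley1999FolGoodGeom}. Unwinding that citation via the lozenge-chain dichotomy, exactly as you do, is the intended content, and your side remarks about singular corners and orientation are indeed not needed for the contradiction since the mere existence of one lozenge already produces a perfect fit.
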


Fenley introduced a compactification of $\OO$ in \cite{Fenley2005IdealBO} by building an ideal boundary $\partial\OO$ homeomorphic to $S^1$, and the resulting compactified space $\overline{\OO}=\OO\cup\partial\OO$ is homeomorphic to a closed 2-disk. We orient $\partial \OO$ as the boundary of $\OO$, and the action of $\pi_1(M)$ on $\OO$ extends continuously to an orientation preserving action on $\overline{\OO}$.
Each ray in $\FF_\OO^s$ or $\FF_\OO^u$ has a well-defined end point, and the endpoints of every leaf are distinct. When the flow has no perfect fits, a ray in $\FF_\OO^s$ and a ray in $\FF_\OO^u$ always have distinct endpoints. If we moreover assume that $\phi$ is not conjugate to an Anosov suspension flow (which is automatic when $M$ is atoroidal), the action of $\pi_1(M)$ on $\partial\OO$ is minimal \cite[Main Theorem]{Fenley2005IdealBO}.

\begin{conv}
We assume $\phi$ is a pseudo-Anosov flow without perfect fits in $M$.
\end{conv}

\subsection{End-periodic automorphisms}\label{sec:end-periodic}

We briefly recall the basics of end-periodic automorphisms of infinite type surfaces, which arise naturally in the study of depth-one foliations. Readers are referred to \cite{CANTWELL_CONLON_FENLEY_2021} for a more complete treatment of the theory.

Let $L$ be an infinite type surface without boundary with finitely many non-planar ends. Given a homeomorphism $f:L\to L$, an end $E$ of $L$ is a contracting end of $f$ if there is a neighborhood $U_E$ of $E$ and an integer $n>0$ such that $f^n(U_E)\subsetneq U_E$, and $\bigcap_{k\geq0}f^{nk}(U_E)$ is empty. Such a neighborhood $U_E$ is called a regular neighborhood of $E$. An end is a repelling end of $f$ if it is a contracting end of $f^{-1}$, and a regular neighborhood of $E$ for $f$ is just a regular neighborhood for $f^{-1}$. A homeomorphism $f$ is called \textbf{end-periodic} if each end of $L$ is either contracting or repelling. If $f$ is end-periodic, a multi-curve $\delta$ in $L$ is called an \textbf{$f$-juncture} if $\delta$ is the boundary of a regular neighborhood of an end. If the end is contracting, $\delta$ is called a positive $f$-juncture. Otherwise it is a negative $f$-juncture. An $f$-invariant choice of a positive (resp. negative) $f$-juncture for each contracting (resp. repelling) end is called a \textbf{system of positive (resp. negative) $f$-junctures}. An end-periodic homeomorphism is \textbf{atoroidal} if it does not preserve any essential multi-curve up to isotopy. 

Given an end-periodic homeomorphism $f$, we fix the regular neighborhood $U_E$ for all ends. Let $U^+$ be the union of $U_E$ of contracting ends, and let $U^-$ be the union of those of repelling ends. The \textbf{positive escaping set} $\UU^+$ and the \textbf{negative escaping set} $\UU^-$ is defined as
\[
\UU^\pm=\bigcup_{n\geq 0}f^{\mp n}(U^\pm).
\]
In other words, $\UU^+$ is the set of points whose positive iterations escape to contracting ends, and $\UU^-$ is the set of points whose negative iterations escape to repelling ends. The mapping torus $M_f$ is non-compact, but it is topologically tame and possesses a nice compactification which will be described below.

The mapping torus $M_f$ is the quotient of $L\times\R$ by an automorphism $F$ where $F$ is given by
\begin{equation*}
\begin{aligned}
F:L\times\R&\to L\times\R\\
(x,t)&\mapsto (f^{-1}(x),t+1).
\end{aligned}
\end{equation*}
We attach $\UU^+\times\{+\infty\}$ and $\UU^-\times\{-\infty\}$ to $L\times\R$ to obtain a manifold $N$ with boundary. The transformation $F$ extends to an automorphism of $N$ by setting $F(x,\pm\infty)=(f^{-1}(x),\pm\infty)$. The $\Z$-action on $N$ generated by $F$ is a covering action, and the quotient space $\overline{M_f}$ is a compact 3-manifold with interior $M_f$ and boundary $\partial\overline{M_f}=\partial^+\overline{M_f}\cup\partial^-\overline{M_f}$, where $\partial^\pm\overline{M_f}$ is a (possibly disconnected) closed surface homeomorphic to $\UU^\pm/f$. See \cite{FKLLEndperiodic2023} for a more detailed discussion of the construction. In particular, Lemma 3.3 of \cite{FKLLEndperiodic2023} shows that $\overline{M_f}$ is atoroidal if and only if $f$ is atoroidal.

\subsection{Depth-one foliation}\label{subsec:depth-1}

A foliation $\FF$ in $M$ is a \textbf{depth-one foliation} if $\FF$ has finitely many compact leaves, whose union we denote by $\FF^0$, and $\FF$ restricted to $M-\FF^0$ is a fibration over circle with non-compact fibers. A connected component of $M-\FF^0$ is called a \textbf{fibered region} of $M$. 
Any fibered region $\Omega$ is bounded by leaves in $\FF^0$, and we denote the collection of these leaves by $\partial\Omega$. We say a leaf in $\partial\Omega$ is a \textbf{positive} (resp. \textbf{negative}) \textbf{boundary leaf} of $\Omega$ if it is on the $\phi$-positive (resp. $\phi$-negative) side of $\Omega$. We denote the collection of positive/negative boundary leaves by $\partial^\pm\Omega$. Note it is possible to have a compact leave contained in both $\partial^+\Omega$ and $\partial^-\Omega$. Let $\overline{\Omega}$ be the union of $\Omega$ and $\partial\Omega$.

Let $L$ be a fiber of the fibration $\FF|_\Omega$. The leaf $L$ limits on a compact leaf $\Sigma\subset\partial\Omega$ in the following way \cite{CCPB1981}. Let $N(\Sigma)\cong \Sigma\times[-1,1]$ be a regular neighborhood of $\Sigma$ with $\Sigma$ identified with $\Sigma\times\{0\}$, and assume that $L$ limits on $\Sigma$ on the positive side. If $N(\Sigma)$ is small enough, the intersection of $L$ and $N(\Sigma)$ is an infinite surface spiraling to $\Sigma$ and covering $\Sigma$ with infinite degree. More precisely, up to shrinking $N(\Sigma)$, there is a multi-curve $\delta$ on $\Sigma$ so that $L\cap N(\Sigma)$ is isotopic to an oriented cut-and-paste of $\delta\times (0,1]$ and $\bigcup_{n\geq2}\Sigma\times\{1/n\}$. A fundamental domain for the spiraling is depicted on top of Figure \ref{fig:spiral}, and a schematic picture of the spiraling neighborhood is shown in Figure \ref{fig:juncture}.

We say a foliation is \textbf{taut} if for any leaf of $\FF$, there is a transverse loop intersecting that leaf.

\begin{figure}[h!]
\begin{subfigure}[c]{0.4\textwidth}
\centering
\includegraphics[scale=.17]{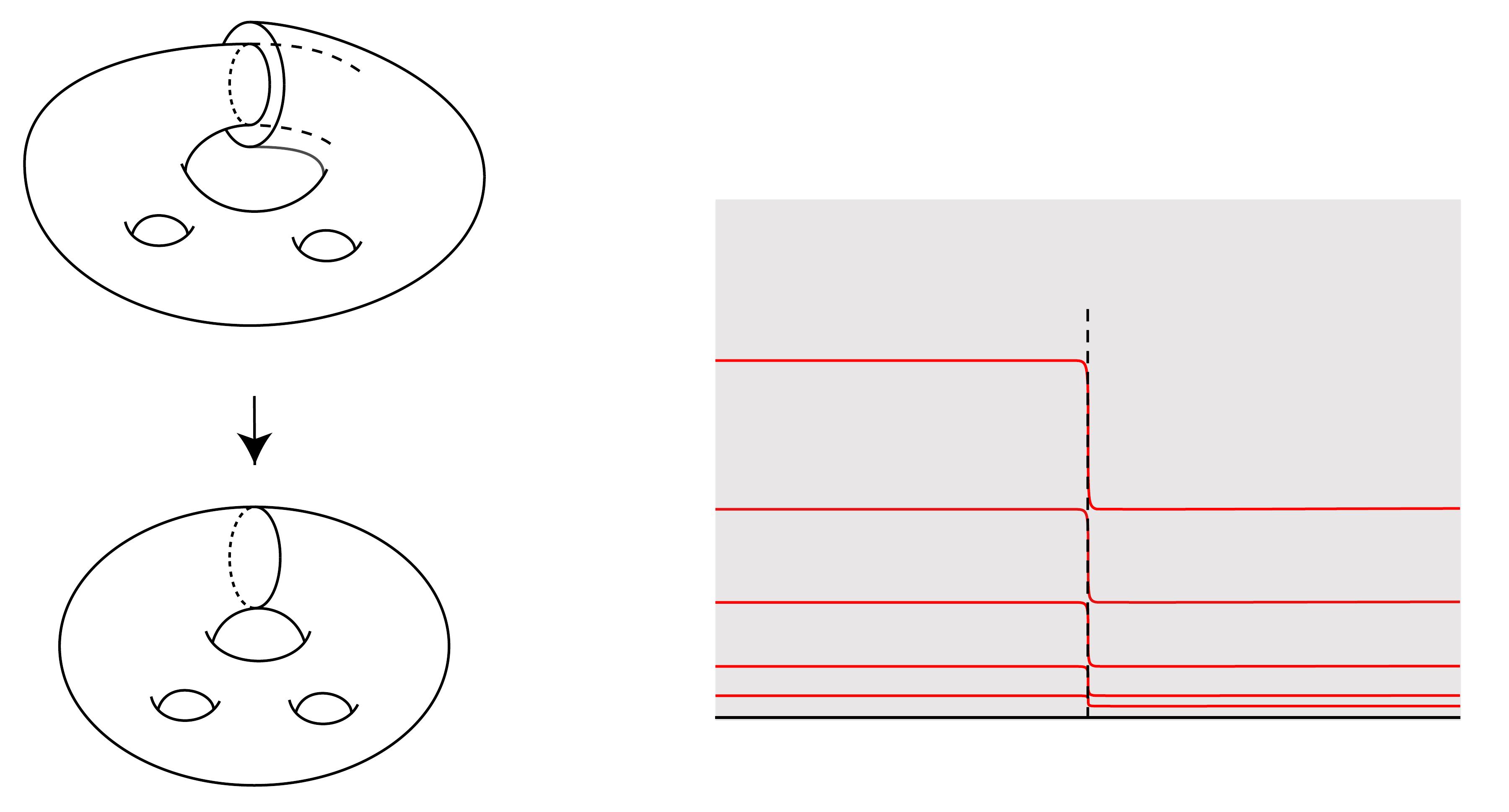}
\labellist
\pinlabel $\Sigma$ at 20 130
\pinlabel $\delta$ at 320 220
\pinlabel $\Sigma$ at 770 110
\pinlabel $\delta$ at 1250 70
\pinlabel $N(\Sigma)$ at 1570 600
\endlabellist
\vfill
\caption{}
\label{fig:spiral}
\end{subfigure}
\begin{subfigure}[c]{0.5\textwidth}
\centering
\includegraphics[scale=.2]{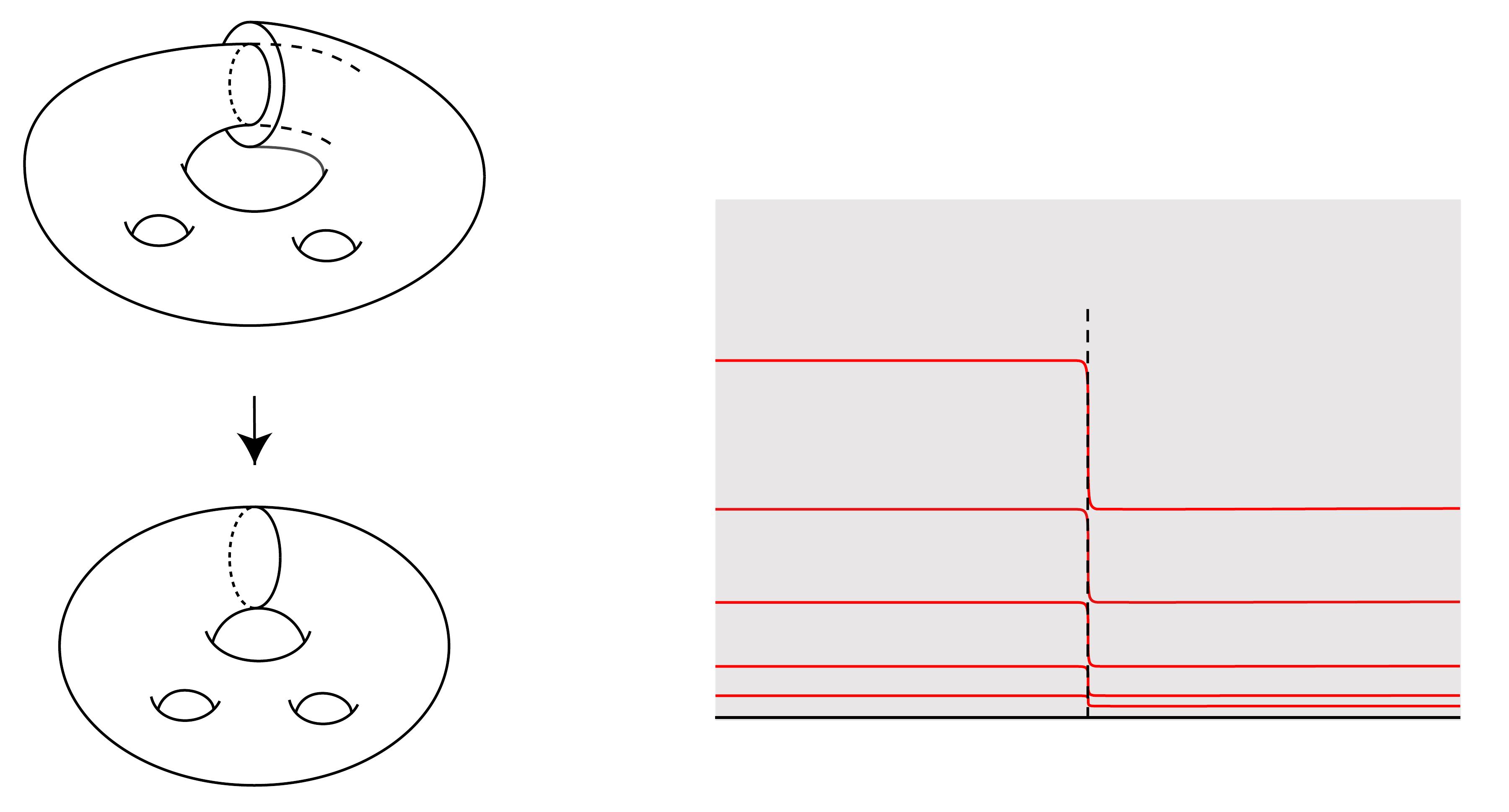}
\vfill
\caption{}
\label{fig:juncture}
\end{subfigure}
\caption{}
\end{figure}

\begin{conv}
We fix a taut depth-one foliation $\FF$ of $M$ and assume that $\FF$ is transverse to $\phi$. By transversality, $\FF$ is coorientable and we take the orientation to be consistent with the flow direction. 
\end{conv}

By transversality of $\FF$ to $\phi$ and by the compactness of $M$, the angle between $T\phi$ and $T\FF$ at any point is uniformly bounded away from zero. In particular, this implies that $\phi|_\Omega$ is a suspension flow of the fibration $\FF|_\Omega$. The flow gives us a way to identify each $\FF$-leaf in $\Omega$ with $L$. Since $\FF$ is taut, every leaf in $\FF^0$ is homologically non-trivial and incompressible by Novikov \cite{NoviFol}. Since $M$ is atoroidal, every compact leaf is a closed hyperbolic surface (the possibility of being a sphere is ruled out by Reeb stability theorem). Therefore, the fundamental domains of the spiraling of $L$ can be chosen to be non-planar. In a spiraling neighborhood of a compact leaf, $\phi$ looks like the product flow. One can see that the first return map induced by $\phi$ is an end-periodic homeomorphism $f:L\to L$ \cite{Fenley1992AsymptoticPO}. Since $M$ is atoroidal, so is $f$. The metric completion of $\Omega$ with respect to the path metric induced by the metric in $M$ gives a compactified mapping torus of $f$.

The metric on $M$ is defined as follows. By \cite{Candel1993}, there is a Riemannian metric on $M$ that restricts to hyperbolic metrics on leaves of $\FF$. We fix such a metric on $M$. A hyperbolic metric on a surface is \textbf{standard} if there is no embedded half-space, following \cite{CCHomot}. The induced hyperbolic metric on any depth-one leaf of $\FF$ is standard, because it has bounded injectivity radius. Indeed, the metric near an end will look more and more like a metric lifted from a hyperbolic closed surface by the nice asymptotic property of depth-one leaves.

Let $\UU^\pm\subset L$ be the positive or negative escaping set of $f$.
By definition, a point $x\in L$ is in $\UU^+$ if and only if the positive ray of $\phi(x)$ hits $\partial^+\Omega$, and it is in $\UU^-$ if and only if the negative ray of $\phi(x)$ hits $\partial^-\Omega$. If $(\overline{\Omega},\partial^-\Omega,\partial^+\Omega)\cong (S\times[0,1],S\times\{0\},S\times\{1\})$ for some closed hyperbolic surface $S$, we say $\Omega$ is a trivial fibered region. This is equivalent to the monodromy $f$ being a pure translation on $L$, and to $\UU^-=\UU^+=L$ \cite[Proposition 4.76]{CANTWELL_CONLON_FENLEY_2021}.

\begin{conv}\label{conv:non-fiber}
We assume that $M$ has no trivial product region. In particular, any compact leaf of $\FF$ is not a fiber. All of our discussions and statements hold with trivial product regions and are readily checked, so we omit the related discussion for simplicity.
\end{conv}

Let $\widetilde{\FF}$ be the lift of $\FF$ in $\widetilde{M}$. The lifted foliation $\widetilde{\FF}$ is a foliation by planes by \cite{NoviFol}. A connected lift of a fibered region of $M$ is called a \textbf{product region} of $\widetilde{M}$. Any product region $\wt{\Omega}$ covering $\Omega$ is homeomorphic to $\wt{L}\times\R$, and we fix a homeomorphism so that $\wt{\Omega}$ is foliated by $\wt{L}\times\{t\}$ and the $\wt{\phi}$-orbits are $\{x\}\times\R$. A lift of a positive/negative boundary leaf of $\Omega$ is a positive/negative boundary leaf of $\wt{\Omega}$, the collection of which is denoted by $\partial^\pm\wt{\Omega}$. Define 
\[
\partial\wt{\Omega}:=\partial^+\wt{\Omega}\cup\partial^-\wt{\Omega}~\text{and}~\overline{\wt{\Omega}}:=\wt{\Omega}\cup\partial\wt{\Omega}
\]

Let $\wt{\UU}^\pm$ be the preimage of $\UU^\pm$ in $\wt{L}$. From the construction of the compactified mapping torus $\overline{\Omega}$ we see that $\overline{\wt{\Omega}}$ is homeomorphic to
\begin{equation}\label{eq:closure-of-pr}
(\wt{L}\times\R)\cup(\wt\UU^+\times\{+\infty\})\cup(\wt\UU^-\times\{-\infty\}).
\end{equation}
The part $\UU^+\times\{+\infty\}$ corresponds to $\partial^+\wt{\Omega}$ and the part $\UU^-\times\{-\infty\}$ corresponds to $\partial^-\wt{\Omega}$.

For any leaf $\mu\in\partial^+\wt\Omega$, there is a component $\wt{\UU}^+_\mu$ of $\wt{\UU}^+$ such that a $\wt\phi$-orbit intersects $\mu$ if and only if it intersects $\wt{L}$ at a point in $\wt{\UU}^+_\mu$. This gives a bijection between leaves in $\partial^+\wt\Omega$ and components of $\wt{\UU}^+$. Similarly, there is a bijection between leaves in $\partial^-\wt\Omega$ and components of $\wt{\UU}^-$.

A leaf of $\widetilde{\FF}$ is called a \textbf{type-0 leaf} if it covers a leaf in $\FF^0$. Otherwise we call it a \textbf{type-1 leaf}. Every type-1 leaf $\mu$ is contained in a unique product region in $\wt{M}$, denoted by $\widetilde{\Omega}(\mu)$. Every type-0 leaf is the negative boundary of a unique production region, and the positive boundary of another different product region. A type-0 leaf $\lambda$ and a type-1 leaf $\mu$ are called \textbf{adjacent} if $\lambda\subset\partial\widetilde{\Omega}(\mu)$. If moreover $\lambda$ is in the positive side of $\mu$, we say $\lambda$ is \textbf{positively adjacent} to $\mu$ or $\mu\lesssim\lambda$; otherwise we say $\lambda$ is \textbf{negatively adjacent} to $\mu$ or $\mu\gtrsim\lambda$.

\begin{figure}[h!]
\centering
\includegraphics[scale=.15]{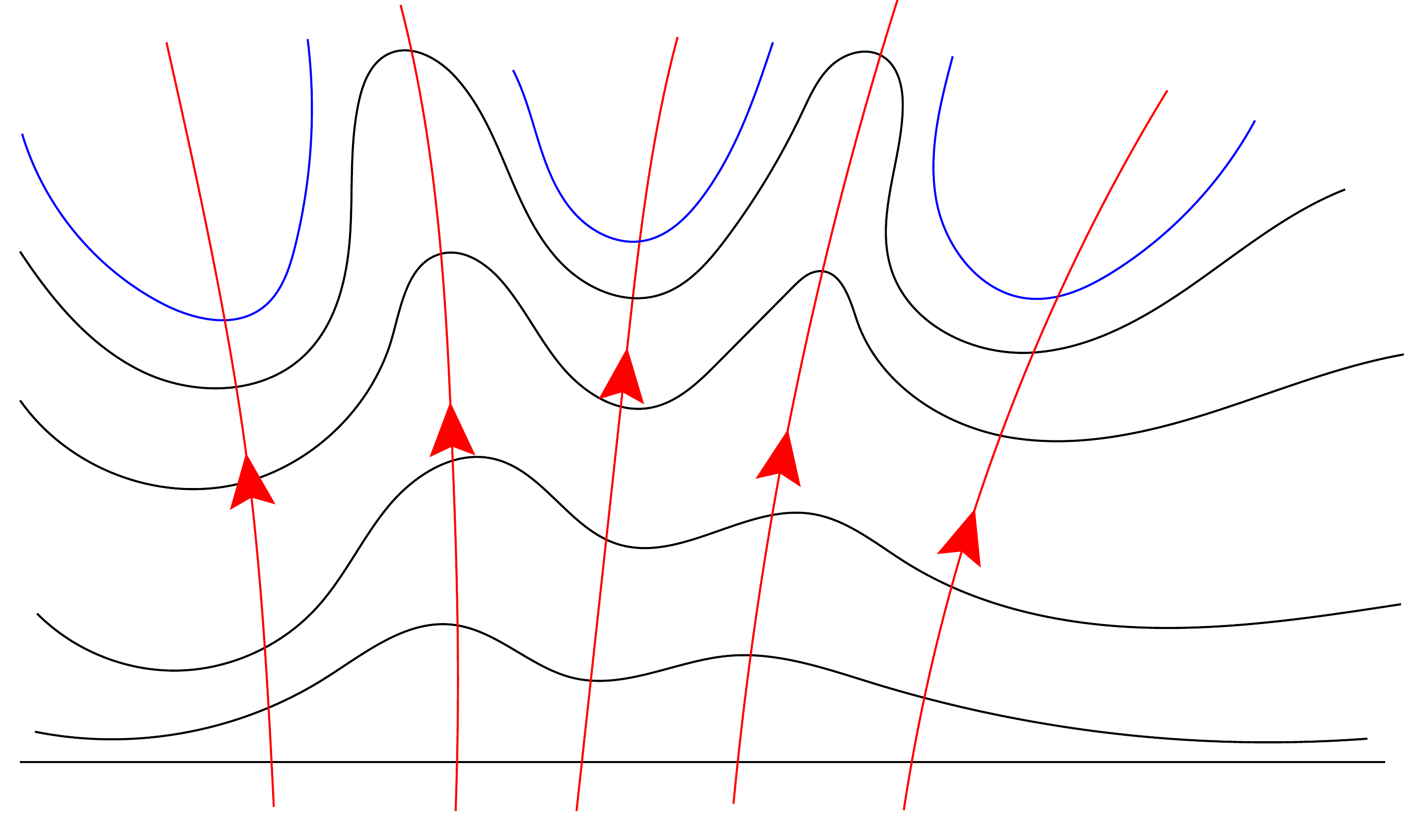}
\caption{Type-1 leaves (black) in a product region limit to type-0 leaves (blue) in the positive boundary and stay transverse to $\wt{\phi}$ (red), creating non-Hausdorff-ness in $\Lambda$}
\label{fig:cataclysm}
\end{figure}

Let $\Lambda$ be the leaf space of $\widetilde{\FF}$ obtained by collapsing each leaf to a point, which is a non-Hausdorff 1-manifold. Since each leaf of $\wt{\FF}$ is properly embedded in $\wt{M}$ and hence separating, $\Lambda$ is simply connected. Each product region $\widetilde{\Omega}$ projects to an oriented open interval in $\Lambda$, with the orientation induced by $\wt{\phi}$. Every such open interval has a countably infinite number of positive endpoints, each corresponding to a component of $\partial^+\wt{\Omega}$. The positive endpoints of the open intervals are non-separated in $\Lambda$ by (\ref{eq:closure-of-pr}). The same is true for negative endpoints. The closures of product regions are glued together along type-0 leaves in the boundary, so each point corresponding to a type-0 leaf is the negative endpoint of exactly one open interval associated to a product region, and the positive endpoint of exactly another different one.

\begin{figure}[h!]
\centering
\labellist
\pinlabel $\wt{\Omega}$ at 320 400
\pinlabel $\cdots$ at 285 650
\pinlabel $\cdots$ at 285 100
\pinlabel $\cdots$ at 970 550
\pinlabel $\cdots$ at 970 210
\endlabellist
\includegraphics[scale=.25]{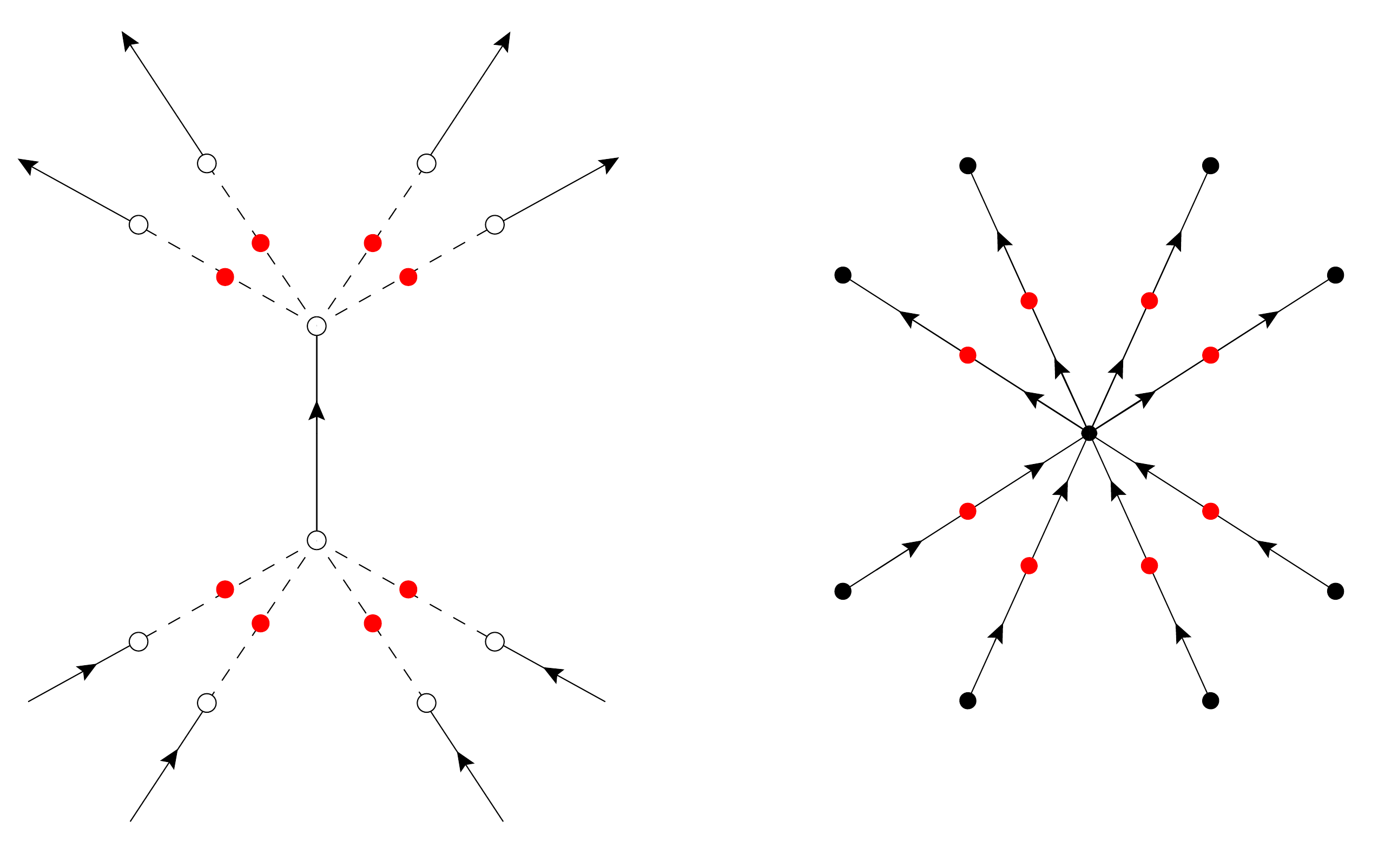}
\caption{Left: a local picture near a product region $\wt{\Omega}$ in $\Lambda$. Right: the corresponding parts in $\Lambda^*$. The red vertices represent type-0 leaves in both pictures, and the arrows indicate the direction of $\wt{\phi}$.}
\label{fig:leaf-space}
\end{figure}

It is sometimes useful to think about the dual graph $\Lambda^*$. The set of vertices are the set of product regions and type-0 leaves. The edges are the pairs $(\Omega,\mu)$ where $\Omega$ is a product region, $\mu$ is a type-0 leaf and $\mu\in\partial\Omega$. The dual graph $\Lambda^*$ is an infinite valence tree with an orientation given by the flow. 

Two leaves of $\widetilde{\FF}$ are called \textbf{comparable} if they can be connected by an oriented path in $\Lambda$. Otherwise, they are \textbf{incomparable}. If $\lambda$ is comparable to $\mu$ and $\mu$ is in the positive side of $\lambda$, we write $\lambda<\mu$; if $\mu$ is in the negative side of $\lambda$, we write $\mu<\lambda$. Similarly, we say two product regions are comparable if there is an oriented path connecting them in $\Lambda^*$, and are incomparable if otherwise.

\subsection{Laminations on $S^1$}
We recall some definitions and constructions of abstract laminations on $S^1$.

Let $\mathrm{Symm}_2(S^1):=S^1\times S^1-\Delta/\sim$ be the space of unordered pairs of distinct points in $S^1$ endowed with the quotient topology, where the relation is given by $(x,y)\sim(y,x)$. Two pairs of points $\{x,y\}$ and $\{z,w\}$ on $S^1$ are said to be unlinked if $z$ and $w$ lie in the same component of $S^1-\{x,y\}$. A lamination on $S^1$ is a closed pairwise unlinked subset of $\mathrm{Symm}_2(S^1)$.

By identifying $S^1$ with $\partial\mathbb{H}^2$, any lamination $\Xi$ on $S^1$ determines a geodesic lamination $\Xi_\mathrm{geod}$ on $\mathbb{H}^2$ by taking the union of geodesics that connect the pairs in $\Xi$. Conversely, any geodesic lamination in $\hyp^2$ gives a lamination on $S^1$ consisting of endpoint pairs of leaves.

Given any subset $A\subset S^1$, the boundary of the convex hull of $\overline{A}$ is a geodesic laminations on $\hyp^2$, which can be viewed as a lamination $\partial\mathrm{CH}(A)$ on $S^1$. Note that the lamination $\partial\mathrm{CH}(A)$ is independent of the choice of the identification $S^1\cong\partial\mathbb{H}^2$.

\section{Infinity of shadows}\label{sec:shadows}

Let $p:\widetilde{M}\to\OO$ be the projection map. For any subset $A$ of $\widetilde{M}$, the image of $A$ under $p$ is called the \textbf{shadow} of $A$. In this section, we will study the shadow of leaves of $\wt\FF$, especially the behavior of the shadow at infinity. The main results are Lemma \ref{lem:c-continuous-extension} and Lemma \ref{lem:nc-continuous-extension}, which hint at a universal circle structure on $\partial\OO$.

Let $\lambda$ be a type-0 leaf of $\widetilde{\FF}$, and let $\Sigma$ be the leaf of $\FF$ it covers. Denote the covering map by $\pi_0$. Since $\lambda$ separates $\wt{M}$ and the flow $\wt{\phi}$ crosses $\lambda$ positively, so each flowline intersects $\lambda$ at most once. It follows that $p$ restricted to $\lambda$ is an orientation-preserving homeomorphism to its image. The map $\pi:=\pi_0\circ (p|_\lambda)^{-1}:p(\lambda)\to \Sigma$ is then a covering map, so we can view $p(\lambda)$ as a universal cover of $\Sigma$. The deck transformation group action on $p(\lambda)$ is simply the action of $\pi_1(M)$ on $\OO$ restricted to a subgroup in the conjugacy class of $\pi_1(\Sigma)$ that stabilizes $p(\lambda)$. We identify this subgroup with $\pi_1(\Sigma)$. By transversality, the intersection of $\FF^{s/u}$ with $\Sigma$ induces a singular foliation on $\Sigma$, denoted by $\FF_\Sigma^{s/u}$. Similarly, let the intersection of $\wt{\FF}^{s/u}$ with $\lambda$ be denoted by $\wt{\FF}_\lambda^{s/u}$. These foliations are related by the relations $p(\wt{\FF}_\lambda^{s/u})=\FF_\OO^{s/u}|_{p(\lambda)}$ and $\pi(\FF_\OO^{s/u}|_{p(\lambda)})=\FF_\Sigma^{s/u}$.

\subsection*{Shadows of type-0 leaves}
Assume $\lambda$ is a type-0 leaf, so $\Sigma$ is an embedded closed surface in $M$ which is not a fiber by Convention \ref{conv:non-fiber}. The shape of the shadow of a non-fibered transverse closed embedded surface is carefully studied in \cite{Cooper:1994aa} when $\phi$ is a pseudo-Anosov suspension flow, later generalized by \cite{Fenley1999823} to general pseudo-Anosov flows. Each component of the topological boundary of $p(\lambda)$ in $\OO$ is either a regular leaf of $\FF_\OO^u$ or $\FF_\OO^s$, or a face of a singular leaf that is regular on the side containing $p(\lambda)$ \cite[Proposition 4.3]{Fenley1999823}. We call a boundary component of $p(\lambda)$ a \textbf{side} of $p(\lambda)$. Consider a side $e$ of $p(\lambda)$, which we assume to be contained in a leaf of $\FF_\OO^s$ for concreteness. We collect some useful facts about the local dynamics at $e$ from \cite{Fenley1999823} in the following proposition.

\begin{prop}[\cite{Fenley1999823}] \label{prop:local-dynamics}
The stabilizer of $e$ in $\pi_1(M)$ is isomorphic to $\Z$ and contained in $\pi_1(\Sigma)$. There is a generator $g_e$ acting on $p(\lambda)$ with the following dynamics (Figure \ref{fig:edge}). 
\begin{itemize}
\item [(1)] The element $g_e$ acts as a contraction on $e$ with a unique fixed point $x_e$.
\item [(2)] The element $g_e$ fixes and expands $l_e:=\FF_\OO^u(x_e)\cap p(\lambda)$, which is the interior of a ray of $\FF_\OO^u$. We have that $l_e$ projects via $\pi$ to a closed leaf $\alpha_e$ of $\FF_\Sigma^u$, whose free homotopy class is represented by $g_e$.
\item [(3)] For any point $x$ other than $x_e$ in $e$, the intersection $l_x:=\FF_\OO^u(x)\cap p(\lambda)$ is connected and it projects via $\pi$ to a non-compact leaf of $\FF_\Sigma^u$ that spirals into $\alpha_e$. If we orient $l_x$ and $l_e$ so that they point towards $e$ and let $\pi(l_x)$ and $\alpha_e$ inherit the orientation, then $\pi(l_x)$ and $\alpha_e$ are asymptotic in the forward direction.
\end{itemize}
\end{prop}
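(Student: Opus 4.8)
The plan is to extract all three parts from the description of $\partial p(\lambda)$ recalled above (Proposition 4.3 of \cite{Fenley1999823}, which I take as given) together with the standard dynamics of $\pi_1(M)$ on $\OO$ for a pseudo-Anosov flow without perfect fits, in particular Lemma \ref{lem:stabilizer}. Write $G_e$ for the stabilizer of $e$ in $\pi_1(M)$ and $\ell$ for the leaf of $\FF^s_\OO$ containing $e$. First I would note that an element fixing $e$ setwise preserves the complementary side of $\ell$ on which $p(\lambda)$ lies, hence preserves $p(\lambda)$, so $G_e\subseteq\mathrm{Stab}_{\pi_1(M)}(p(\lambda))=\pi_1(\Sigma)$. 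To get the isomorphism type I would transport the problem to $\wt\Sigma\cong\hyp^2$ via $p|_\lambda$, which conjugates the $\pi_1(\Sigma)$-action on $p(\lambda)$ with the deck action on $\wt\Sigma$: the side $e$ is attached to a single point of the ideal boundary of $\wt\Sigma$, so $G_e$ is a subgroup of the cocompact Fuchsian group $\pi_1(\Sigma)$ fixing an ideal point, hence infinite cyclic generated by a primitive loxodromic element; nontriviality of $G_e$ is exactly the fact (implicit in \cite{Fenley1999823}) that a side of the shadow of a closed transverse surface is contained in a periodic leaf. This gives $G_e\cong\Z$, and pins down a generator $g_e$ once we demand that $g_e$ contract $e$.

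\textbf{Dynamics on $e$ and on $l_e$.} Since $g_e\neq 1$ stabilizes the leaf $\ell$ of $\FF^s_\OO$, the standard dynamics of pseudo-Anosov flows (a non-trivial element stabilizing a stable leaf fixes a point of it, the projection of a periodic orbit, and Lemma \ref{lem:stabilizer} makes this point unique in $\OO$) gives a unique fixed point $x_e\in\ell$ with $g_e$ acting as an orientation-preserving contraction of $\ell\cong\R$ toward $x_e$, orientation-preserving because $g_e$ preserves the coorientation of $\ell$. Any $g_e$-invariant connected proper subarc of $\ell$ must then have $x_e$ in its closure, so $x_e\in\overline{e}\subseteq\overline{p(\lambda)}$, giving (1). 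As $x_e$ is periodic, $\FF^u_\OO(x_e)$ is $g_e$-invariant and $g_e$ expands it by the stable/unstable dichotomy at a periodic orbit; because $x_e$ lies on the boundary side $e$, only the sub-ray of $\FF^u_\OO(x_e)$ pointing into $p(\lambda)$ meets $p(\lambda)$, so $l_e=\FF^u_\OO(x_e)\cap p(\lambda)$ is the interior of a ray, and $\alpha_e=\pi(l_e)$ is a $g_e$-invariant complete leaf of $\FF^u_\Sigma$ on which $g_e$ acts by translation, hence a closed leaf with free homotopy class $g_e$. This is (2).

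\textbf{The non-fixed orbits.} For $x\in e\setminus\{x_e\}$ I would first show $l_x=\FF^u_\OO(x)\cap p(\lambda)$ is connected: a ray of $\FF^u_\OO(x)$ leaving $p(\lambda)$ and returning would have to cross a side of $p(\lambda)$ twice, producing a perfect fit between a stable ray in $\partial p(\lambda)$ and $\FF^u_\OO(x)$, contradicting our standing hypothesis (this can also be read off the product-like model of a neighborhood of $e$ in $\overline{p(\lambda)}$ from \cite{Fenley1999823}). Then the backward iterates $g_e^{-n}(x)$ converge monotonically to $x_e$ along $e$, so $g_e^{-n}(l_x)=l_{g_e^{-n}(x)}$ accumulate onto $l_e$; projecting, $\pi(l_x)$ is a non-compact leaf of $\FF^u_\Sigma$ spiralling onto $\alpha_e$, and the orientation conventions (pointing $l_x$ and $l_e$ toward $e$) make the spiralling forward-asymptotic, which is (3).

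The main obstacle I anticipate is the bookkeeping in the first step: making precise that a side $e$ corresponds to exactly one ideal point of $\wt\Sigma$ and that its stabilizer is non-trivial, together with the claim in the last step that $l_x$ cannot re-enter $p(\lambda)$. Both come down to pinning the exact shape of $\partial p(\lambda)$ near a side, which is precisely where the no-perfect-fits hypothesis and Fenley's analysis of shadows of closed transverse surfaces are doing the essential work; everything else is then a routine consequence of the contraction/expansion dynamics of $g_e$.
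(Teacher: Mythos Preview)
The paper does not give its own proof of this proposition; it is stated as a summary of facts extracted from \cite{Fenley1999823}, so there is no argument in the paper to compare against. Your sketch has the right shape, but there is a genuine gap in the first step.

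The problematic line is ``an element fixing $e$ setwise preserves the complementary side of $\ell$ on which $p(\lambda)$ lies, hence preserves $p(\lambda)$.'' The second ``hence'' does not follow: the half of $\OO$ cut off by $\ell$ (or by the face, in the singular case) is much larger than $p(\lambda)$, and preserving that half-plane says nothing about preserving the particular open set $p(\lambda)$ inside it. A priori $g\in G_e$ could send $\lambda$ to a different type-0 leaf $g\lambda$ whose shadow also has $e$ as a side on the same side of $\ell$; ruling this out is exactly the content that needs Fenley's analysis of how the closed leaves of $\FF_\Sigma^{s/u}$ correspond to the sides of $p(\lambda)$. Relatedly, your sentence ``the side $e$ is attached to a single point of the ideal boundary of $\wt\Sigma$'' is essentially what Lemma~\ref{lem:c-continuous-extension} proves \emph{using} this proposition, so invoking it here risks circularity.

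A cleaner route to the structural claims is to argue on the flow side first: $G_e\subseteq\mathrm{Stab}_{\pi_1(M)}(\ell)$, and for a pseudo-Anosov flow without perfect fits the stabilizer of a leaf of $\FF_\OO^{s}$ is infinite cyclic, generated by the element corresponding to the unique periodic orbit on that leaf (this is where Lemma~\ref{lem:stabilizer} enters). That gives $G_e\cong\Z$ and the dynamics in (1) and (2) immediately. The containment $G_e\subset\pi_1(\Sigma)$ is then the substantive statement, and it is obtained in \cite{Fenley1999823} by showing directly that each side of $p(\lambda)$ arises from a closed leaf of the induced singular foliation on $\Sigma$; once one knows that, the generator of $G_e$ is represented by that closed curve on $\Sigma$ and the rest of your outline for (2) and (3) goes through as written.
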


\begin{figure}
\centering
\labellist
\pinlabel $x_e$ at 300 300
\pinlabel $l_e$ at 450 480
\pinlabel $e$ at 160 430
\endlabellist
\includegraphics[scale=.2]{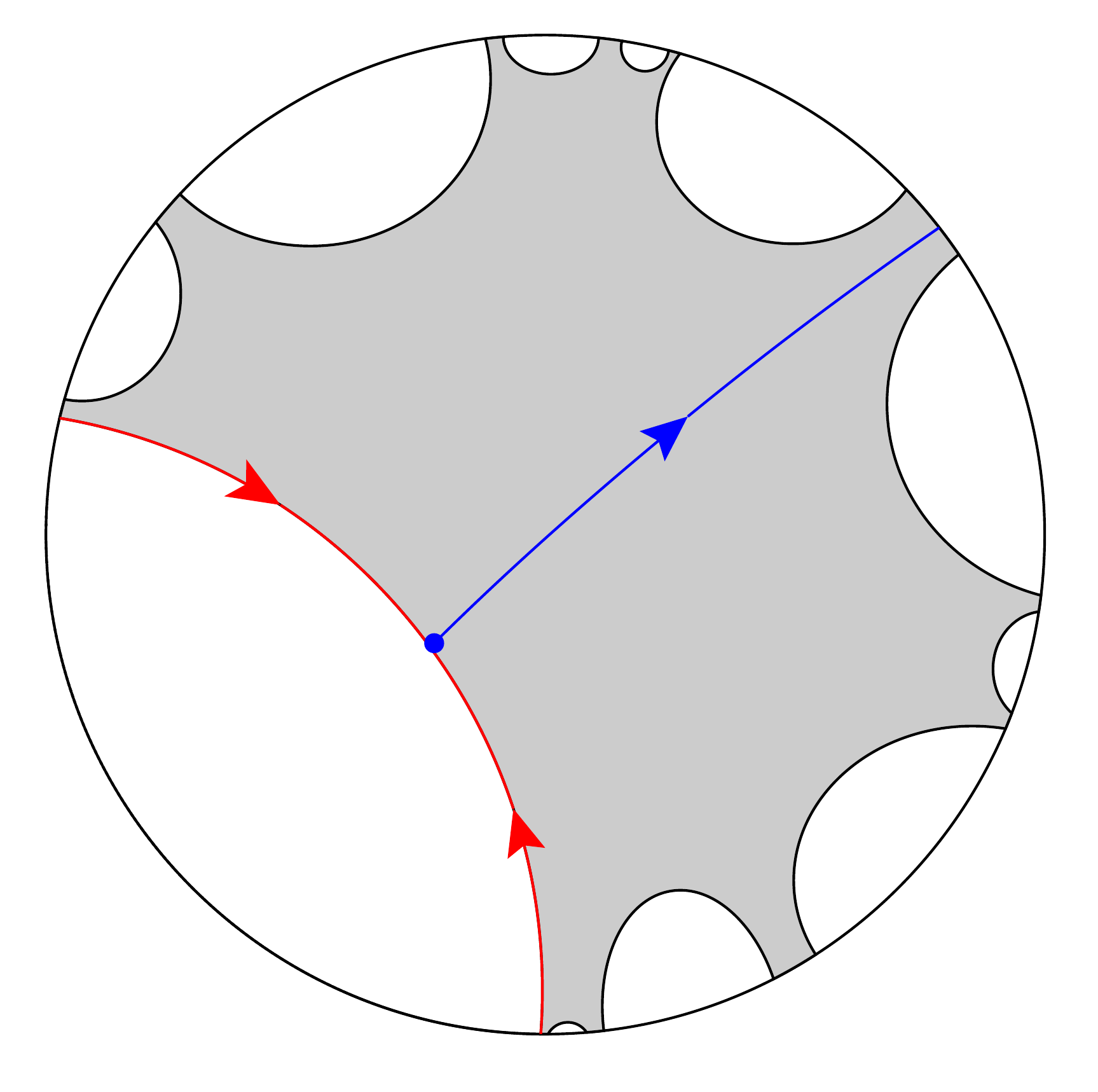}
\caption{}
\label{fig:edge}
\end{figure}

The 3-dimensional picture is the following (Figure \ref{fig:3d}). In $\wt{M}$, the orbit $\sigma_e:=p^{-1}(x_e)$ is a periodic orbit disjoint from $\lambda$. The leaf $p^{-1}(e)\subset\wt{\FF}^s(\sigma_e)$ does not intersect $\lambda$, and $p^{-1}(l_e)\subset\wt{\FF}^u(\sigma_e)$ intersects $\lambda$ transversely in a line $\wt{\alpha}_e$, so that all the $\wt{\phi}$-orbits in $p^{-1}(l_e)$ cross $\lambda$ positively. The line $\wt{\alpha}_e$ covers the simple closed curve $\alpha_e$ in $\Sigma$. The element $g_e$ is a translation that fixes $\sigma_e$ and $\lambda$.

For a side $e$ contained in a leaf of $\FF_\OO^s$, we have a similar picture. From now on, given a side $e$ of $p(\lambda)$ we will continue to use the notations $g_e$, $l_e$, $x_e$ and $\alpha_e$ for the objects described in Proposition \ref{prop:local-dynamics}. In particular, we take $g_e$ to be the generator in the stabilizer of $e$ that contracts $e$.

\begin{figure}
\centering
\labellist
\endlabellist
\includegraphics[height=2in]{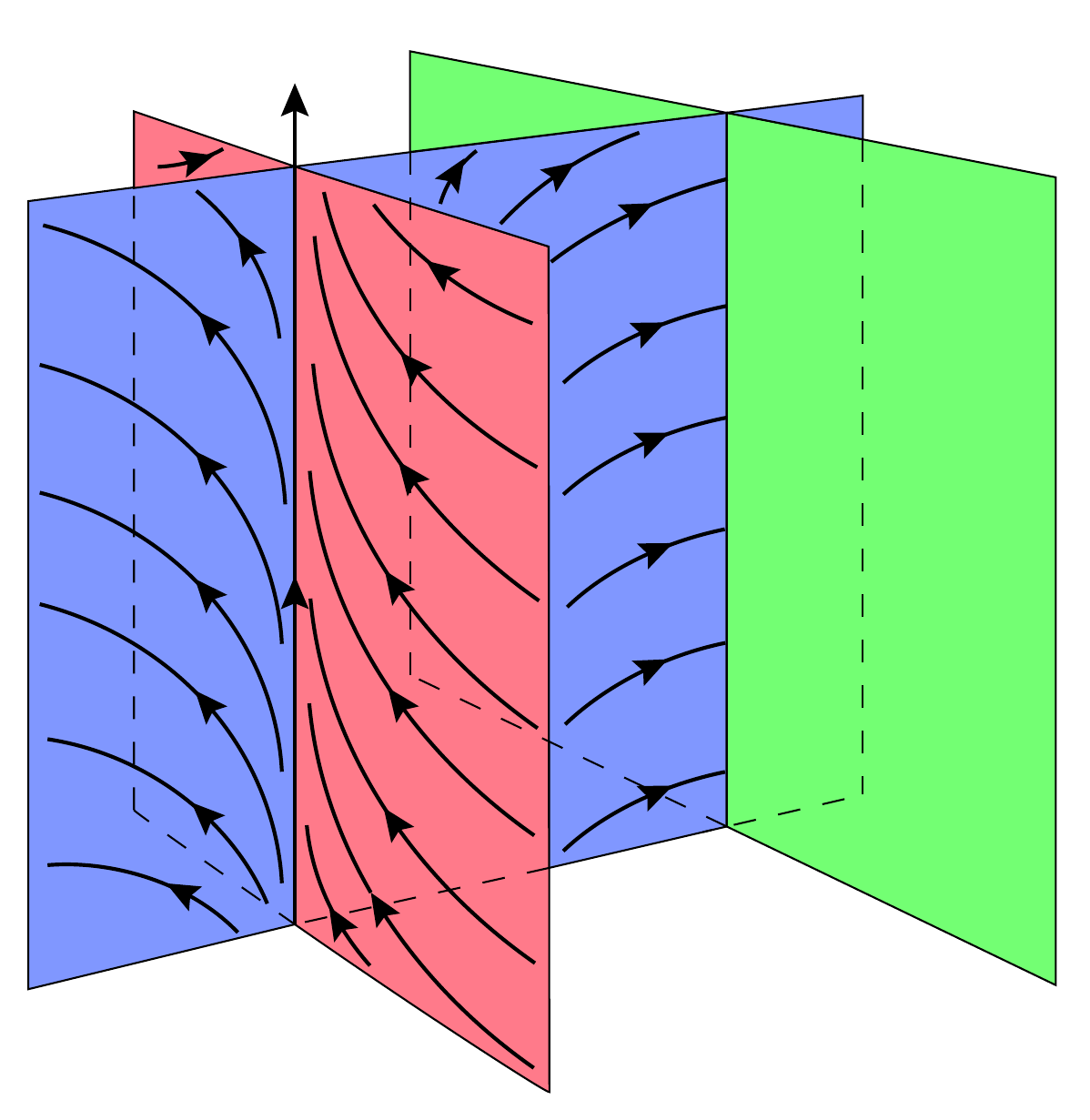}
\caption{The vertical arrowed line represents the periodic orbit $\sigma_e$, the red plane is $\wt\FF^s(\sigma_e)$, the blue plane is $\wt{\FF}^u(\sigma_e)$, and the green plane is $\lambda$.}
\label{fig:3d}
\end{figure}

Let $\partial p(\lambda)$ be the boundary of $p(\lambda)$ \emph{in $\overline{\OO}$}, consisting of sides and ideal boundary points at infinity of $p(\lambda)$. 

\begin{lem}\label{lem:density-of-edges}
The intersection $\partial p(\lambda)\cap\partial\OO$ is nowhere dense.
\end{lem}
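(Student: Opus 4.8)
The plan is to show that $\partial p(\lambda)\cap\partial\OO$ is closed with empty interior, the closedness being automatic since $\partial p(\lambda)$ is closed in $\overline{\OO}$ and $\partial\OO$ is closed in $\overline{\OO}$. The work is therefore to rule out that $\partial p(\lambda)\cap\partial\OO$ contains an arc of $\partial\OO$. First I would recall that $p(\lambda)$ is an open, connected, simply connected subsurface of $\OO\cong\R^2$ whose frontier in $\OO$ is a disjoint union of sides, each a regular leaf of $\FF_\OO^{s}$ or $\FF_\OO^{u}$ or a regular face of a singular leaf, as in \cite[Proposition 4.3]{Fenley1999823}. The key structural input is Proposition \ref{prop:local-dynamics}: the stabilizer of each side $e$ is infinite cyclic, generated by $g_e$, and the curve $\alpha_e=\pi(l_e)$ is an essential closed curve in $\Sigma$; since $\Sigma$ is an incompressible closed hyperbolic surface (not a fiber, by Convention \ref{conv:non-fiber} and the atoroidality of $M$), distinct sides give (up to the $\pi_1(\Sigma)$-action) only finitely many curves $\alpha_e$, and correspondingly the sides fall into finitely many $\pi_1(\Sigma)$-orbits.

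The heart of the argument is a limiting/compactness step. Suppose for contradiction that $\partial p(\lambda)\cap\partial\OO$ contains a nondegenerate arc $J\subset\partial\OO$. I would first argue that the endpoints of sides are dense in $\partial p(\lambda)\cap\partial\OO$: if a point $\xi\in\partial\OO$ lies in the closure of $p(\lambda)$ but is not an endpoint of a side, then every neighborhood of $\xi$ meets infinitely many distinct sides $e_n$ of $p(\lambda)$, since $p(\lambda)$ near $\xi$ cannot be all of a half-disk neighborhood (else $\xi$ would be an interior ideal point, contradicting $\xi\in\partial p(\lambda)$) — here I use that the sides are disjoint embedded leaves/faces and that their union together with the ideal boundary bounds the region. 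Taking endpoints $x_{e_n}\in\partial\OO$ of these sides (each side $e$, being a ray or a face-ray configuration, has well-defined distinct endpoints on $\partial\OO$ by the no-perfect-fits hypothesis and \cite{Fenley2005IdealBO}) gives a sequence of distinct endpoints of sides accumulating in $J$. Passing to a subsequence, the sides $e_n$ converge (on compact subsets of $\OO$, after translating by deck transformations if one wants, but here we keep them fixed) and, since there are only finitely many $\pi_1(\Sigma)$-orbits of sides, infinitely many of the $e_n$ are $\pi_1(\Sigma)$-translates $\gamma_n\cdot e$ of a single side $e$. Then the infinitely many distinct closed curves $\gamma_n\alpha_e\gamma_n^{-1}$ in $\Sigma$ would all be embedded and their lifts $\gamma_n\cdot l_e$ to $p(\lambda)$ would be pairwise disjoint rays landing at the distinct points $\gamma_n\cdot x_e$ accumulating in $J$; but $l_e$ together with its stabilizer $\langle g_e\rangle$ forces the translates to be arranged around $p(\lambda)$ in a way controlled by the cyclic word/conjugation structure, and a hyperbolic surface group cannot have infinitely many conjugates of a fixed element with landing points accumulating on a full arc — this contradicts the discreteness of the $\pi_1(\Sigma)$-action on $\partial p(\lambda)\cong\partial\hyp^2$, where the fixed point set of each conjugate $\gamma_n g_e\gamma_n^{-1}$ is a single pair and conjugates accumulate only at limit points in a controlled (nowhere dense, since $\pi_1(\Sigma)$ is convex cocompact on $\partial p(\lambda)$) fashion.

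Cleaner, and the version I would actually write: identify $\partial p(\lambda)$ with $\partial\hyp^2$ via the standard hyperbolic structure on $\Sigma$ pulled back to the universal cover $p(\lambda)\to\Sigma$ (using that the induced metric on $\lambda$ is standard, as in the excerpt, and that $p|_\lambda$ is a homeomorphism). Under this identification, each side $e$ has two endpoints in $\partial\hyp^2$, and the closure $\overline{p(\lambda)}$ in $\overline{\OO}$ restricted to $\partial\OO$ corresponds to $\partial\hyp^2$ minus the open intervals "cut off" by the sides. The sides project to a collection of geodesics-up-to-bounded-distance in $\Sigma$ whose images are a finite union of closed curves $\alpha_e$ and their spiraling leaves; the full preimage of this collection in $\hyp^2$ is a geodesic lamination (or a discrete geodesic family), and the gaps it cuts off on $\partial\hyp^2$ form a $\pi_1(\Sigma)$-invariant family whose union is dense because $\pi_1(\Sigma)$ acts minimally on $\partial\hyp^2$ and one gap has nonempty interior — hence the complement, which is exactly $\partial p(\lambda)\cap\partial\OO$ under the identification, has empty interior. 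I expect the main obstacle to be making rigorous the passage from "sides of $p(\lambda)$" to "geodesics in the hyperbolic structure on $\Sigma$" — i.e., controlling that a side and its endpoints behave, coarsely, like a geodesic with the same endpoints, so that the endpoints of the $\pi_1(\Sigma)$-translates of the sides are dense in $\partial\hyp^2\cong\partial p(\lambda)$; this is where the finiteness of side-orbits (Proposition \ref{prop:local-dynamics} plus incompressibility of $\Sigma$) and minimality of the surface group action on its Gromov boundary do the work.
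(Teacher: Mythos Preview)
Your proposal has genuine gaps in both sketched approaches.

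In the first approach, the key dynamical claim is false: you assert that a cocompact surface group ``cannot have infinitely many conjugates of a fixed element with landing points accumulating on a full arc,'' appealing to discreteness. But for a cocompact Fuchsian group the orbit of any point on $\partial\hyp^2$ is dense, so the fixed points of conjugates of any hyperbolic element are dense in $\partial\hyp^2$ and certainly accumulate on every arc. Nothing is contradicted.

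In the ``cleaner'' approach, the identification you invoke is circular and the picture of where sides land is inverted. The natural comparison between $\partial p(\lambda)$ (the boundary in $\overline{\OO}$) and $\partial_\infty\lambda\cong\partial\hyp^2$ is precisely the map $Q_\lambda$ built in Lemma~\ref{lem:c-continuous-extension}, and that lemma \emph{uses} Lemma~\ref{lem:density-of-edges} in an essential way (to extend $Q_\lambda$ from the sides to all of $\partial p(\lambda)$). So you cannot presuppose such an identification here. Moreover, under $Q_\lambda$ each side $e$ collapses to a single point $\partial^-g_e\in\partial_\infty\lambda$; sides do not have ``two endpoints in $\partial\hyp^2$'' and do not cut off intervals there. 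The open intervals cut off by the sides live on $\partial\OO$, not on $\partial\hyp^2$, and there is no $\pi_1(\Sigma)$-equivariant circle on which to run the minimality argument prior to Lemma~\ref{lem:c-continuous-extension}.

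The paper's argument avoids all of this by working directly on $\overline{\OO}$. Assume a maximal closed arc $A\subset\partial p(\lambda)\cap\partial\OO$ exists; take a side $e_1$ with one endpoint of $A$ as an ideal endpoint, and let $c\in\partial\OO$ be the ideal endpoint of the transverse ray $l_{e_1}$. If $c\notin A$, the element $g_{e_1}$ fixes the two ideal points $\eta_1$ and $c$ but (by Lemma~\ref{lem:stabilizer}, i.e.\ no perfect fits) cannot fix the side $e_2$ abutting the other end of $A$, so applying $g_{e_1}^{\pm1}$ enlarges $A$, contradicting maximality. If $c\in A$, then $l_{e_1}$ separates $p(\lambda)$ into two pieces, one of which meets no side; transporting via $(p|_\lambda)^{-1}$ to $\lambda\cong\hyp^2$, this says a lift of the simple closed curve $\alpha_{e_1}$ has no other lift on one side, which is impossible on a closed hyperbolic surface. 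This is short, uses only already-established facts, and does not require any boundary identification.
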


\begin{proof}
It is clear that $\partial p(\lambda)\cap\partial\OO$ is a closed subset of $\partial\OO$. Suppose that there is a maximal closed interval $A$ contained in $\partial p(\lambda)\cap\partial\OO$ with endpoints $\eta_1$ and $\eta_2$. Then there is a side $e_1$ of $p(\lambda)$ such that $\eta_1$ is an endpoint of $e_1$. Take $g_{e_1}$, $x_{e_1}$ and $l_{e_1}$ as before. Let $c$ be the endpoint of $l_{e_1}$ at $\partial\OO$.

There is another side $e_2$ of $p(\lambda)$ such that $\eta_2$ is an endpoint of $e_2$. If $c$ does not lie in $A$, then $e_2$ is between $c$ and $\eta_1$ (Figure \ref{fig:density-of-edges}). The action of $g_{e_1}$ fixes $\eta_1$ and $c$, but it cannot fix $e_2$. Otherwise $g_{e_1}$ will have two fixed points on $\OO$, contradicting the assumption that $\phi$ has no perfect fits by Lemma \ref{lem:stabilizer}. Then one of $g_{e_1}^{\pm1}(\eta_2)$ will be in $A$, contradicting our choice of $A$. 

\begin{figure}[h!]
\centering
\labellist
\pinlabel $x_{e_1}$ at 550 250
\pinlabel $\eta_1$ at 270 70
\pinlabel $\eta_2$ at 50 300
\pinlabel $e_1$ at 750 300
\pinlabel $e_2$ at 190 420
\pinlabel $l_{e_1}$ at 350 550
\pinlabel $c$ at 50 650
\pinlabel $A$ at 130 170
\endlabellist
\includegraphics[scale=.18]{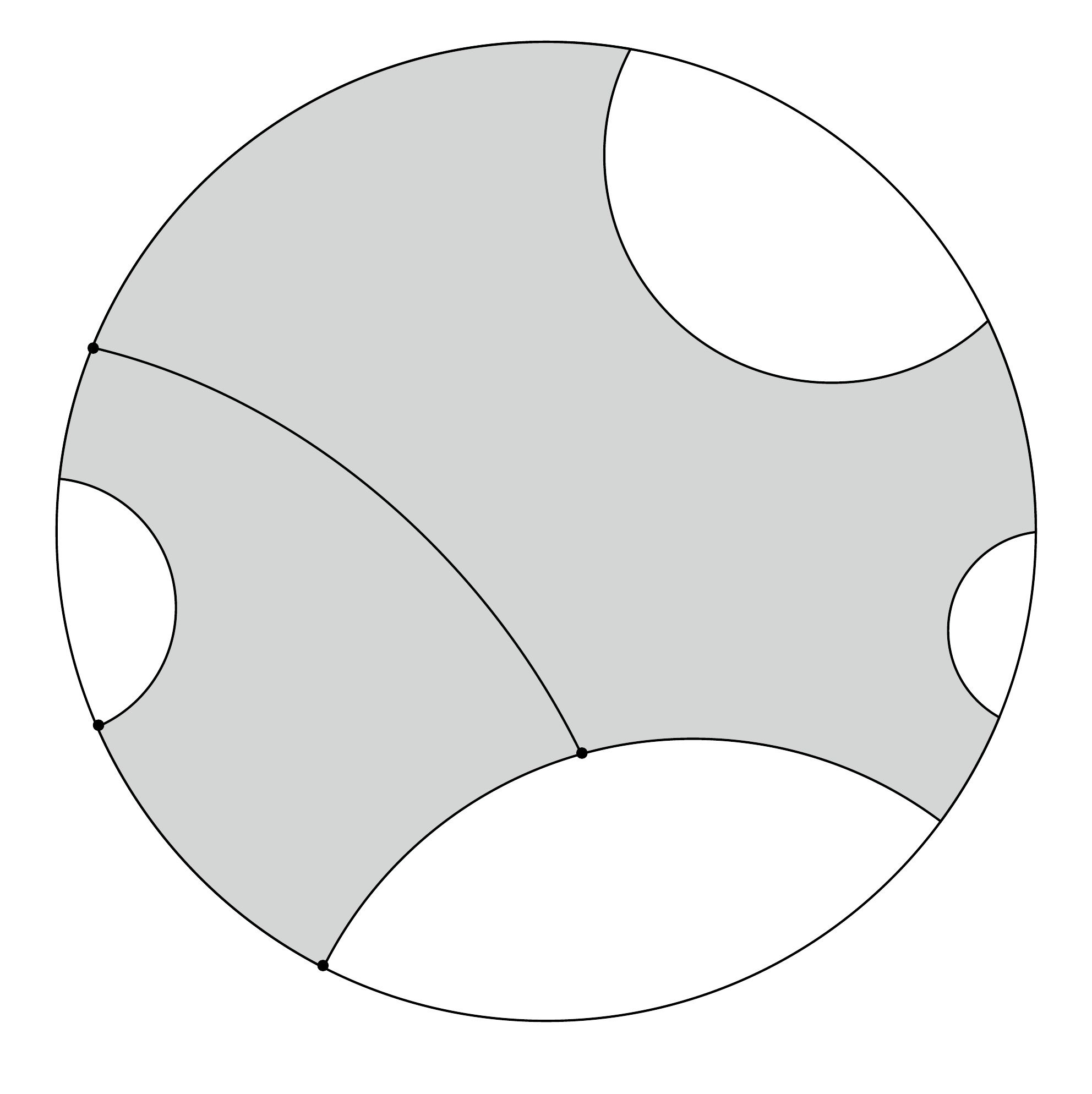}
\caption{}
\label{fig:density-of-edges}
\end{figure}

Hence, $c$ must lie in $A$. Now $l_{e_1}$ divides $p(\lambda)$ into two connected components, one of which contains no side of $p(\lambda)$. In particular, this component contains no $\pi_1(\Sigma)$-translation of $l_{e_1}$. Translated to the hyperbolic plane by $Q_\lambda$, this means there is a simple closed curve $\alpha_{e_1}\in\Sigma$ and a lift $Q_\lambda(l_{e_1})$ of $\alpha_{e_1}$ in $\lambda$ such that there is no other lift on one side of $\wt{\alpha}$. But that is impossible. We conclude that such interval $A$ does not exist.
\end{proof}

The boundary $\partial p(\lambda)$ is homeomorphic to a circle, and $\overline{p(\lambda)}:=\partial p(\lambda)\cup p(\lambda)$ is the closure of $p(\lambda)$ in $\overline{\OO}$. Note that our choice of the metric on $M$ restricts to a hyperbolic metric on $\Sigma$, so $\lambda$ is isometric to $\hyp^2$. We define $\overline{\lambda}$ to be the usual compactified hyperbolic plane with ideal boundary $\partial_\infty\lambda$. The next lemma reveals how the boundary of $p(\lambda)$ is related to $\partial_\infty\lambda$.

Recall that a map $g$ from $S^1$ to $S^1$ is called \textbf{monotone} if the preimage of any point on $S^1$ is contractible.  A \textbf{gap} of $g$ is a maximal closed interval of positive length in $S^1$ that is collapsed to a single point by $g$. The \textbf{core} of $g$ is the complement of the union of the interiors of gaps, denoted by $\mathrm{core}(g)$. We remark that the gaps are sometimes taken to be open intervals in the literature, different from our convention.

\begin{lem}\label{lem:c-continuous-extension}
    Let $\lambda$ be a type-0 leaf of $\wt{\FF}$. Then the homeomorphism $p^{-1}|_{p(\lambda)}: p(\lambda)\to\lambda$ extends continuously to a map $Q_\lambda$ from $\overline{p(\lambda)}$ to $\overline{\lambda}:=\partial_\infty \lambda\cup\lambda$. The map restricted to $\partial{p(\lambda)}$ is a monotone map to $\partial_\infty\lambda$ with core $\overline{p(\lambda)}\cap\partial\OO$.
\end{lem}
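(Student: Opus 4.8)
The plan is to build the extension $Q_\lambda$ in two stages, first extending $p^{-1}$ to the sides of $p(\lambda)$ and then to the ideal points of $p(\lambda)$ in $\partial\OO$, and throughout to keep track of enough of the coarse geometry of $\lambda\cong\hyp^2$ to identify the image of $\partial p(\lambda)$ with $\partial_\infty\lambda$. First I would treat a side $e$ of $p(\lambda)$: by Proposition \ref{prop:local-dynamics} the leaves $l_x = \FF_\OO^{u}(x)\cap p(\lambda)$ for $x\in e$ sweep out a region of $p(\lambda)$, and under $p^{-1}$ each $l_x$ maps to a properly embedded line in $\lambda$; the key point is that all these lines, and in fact all of $e$, collapse to a single ideal point of $\lambda$. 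Concretely, $g_e$ acts on $\lambda$ as a hyperbolic isometry whose axis is $Q_\lambda(l_e)=\wt\alpha_e$, with one endpoint $\xi_e^+\in\partial_\infty\lambda$ the attracting fixed point; since $g_e$ contracts $e$ toward $x_e$ and the $l_x$ are asymptotic to $l_e$ on the $e$-side, the preimage lines $p^{-1}(l_x)$ all share the ideal endpoint $\xi_e^+$, and the side $e$ together with its closure must map to $\{\xi_e^+\}$. So each side collapses to a point, and I would note that distinct sides give distinct ideal points (two sides sharing an ideal endpoint of $\lambda$ would give two lifts of closed curves asymptotic in $\lambda$, forcing a perfect fit or violating Lemma \ref{lem:stabilizer}, as in the proof of Lemma \ref{lem:density-of-edges}).

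Next I would extend to the ideal points of $p(\lambda)$ lying on $\partial\OO$. Here the input is Lemma \ref{lem:density-of-edges}: $\partial p(\lambda)\cap\partial\OO$ is nowhere dense, so the sides and their endpoints are dense in $\partial p(\lambda)$. For an ideal point $\eta\in\partial p(\lambda)\cap\partial\OO$, I would approach $\eta$ by a sequence of sides $e_n$ with endpoints converging to $\eta$, and show the corresponding ideal points $\xi_{e_n}^+=Q_\lambda(e_n)\in\partial_\infty\lambda$ form a Cauchy sequence (using that the $e_n$ nest down and their $p^{-1}$-images are disjoint properly embedded lines nesting down in $\hyp^2$, hence their ideal endpoints converge). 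Define $Q_\lambda(\eta)$ to be this limit; independence of the approximating sequence follows from monotonicity/nesting. The map so defined is visibly monotone: a nontrivial preimage of a point $\xi\in\partial_\infty\lambda$ is exactly a closed interval of $\partial p(\lambda)$ bounded by (the endpoints of) sides that collapse to $\xi$, and since a single ideal point of $\lambda$ is hit by at most one side, such an interval is either a single point of $\partial\OO$, a single side, or a side-together-with-adjacent-ideal-arc — in all cases contractible.

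The remaining tasks are continuity and the identification of the core. For continuity at a point of $p(\lambda)$ itself, since $p|_\lambda$ is a homeomorphism onto an open subset of $\OO$, I only need continuity at boundary points, which follows from the nesting/approximation description above together with Lemma \ref{lem:density-of-edges}: any neighborhood basis in $\overline{p(\lambda)}$ is refined by regions cut off by sides, which map into small arcs of $\overline\lambda$. For the core, a point of $\partial p(\lambda)$ lies in the interior of a gap precisely when it lies in the interior of a side (the maximal collapsed intervals are exactly the sides, by the "distinct sides, distinct ideal points" fact and density of sides), so $\mathrm{core}(Q_\lambda|_{\partial p(\lambda)})$ is the complement of the union of interiors of sides, which is exactly $\overline{p(\lambda)}\cap\partial\OO$; here I should also confirm surjectivity onto $\partial_\infty\lambda$, i.e.\ that no ideal point of $\lambda$ is "missed", which again comes from density of sides in $\partial p(\lambda)$ and the fact that the $\pi_1(\Sigma)$-translates of the axes $\wt\alpha_e$ have endpoints dense in $\partial_\infty\lambda$ (equivalently, $\Sigma$ is a closed hyperbolic surface, so closed geodesics in the relevant classes are dense). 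I expect the main obstacle to be the continuity argument at the ideal boundary: one has to rule out that a sequence in $p(\lambda)$ converging to some $\eta\in\partial\OO$ could have $p^{-1}$-images escaping to the "wrong" part of $\partial_\infty\lambda$, and the clean way to do this is to bound the $p^{-1}$-image of the region of $p(\lambda)$ cut off by a side $e$ inside a half-plane of $\hyp^2$ bounded by the geodesic with the same endpoints as $p^{-1}(l_e)$, then let $e$ shrink toward $\eta$. Making that "cut off region maps into a shrinking half-plane" estimate precise — uniformly as the side varies — is the technical heart of the argument.
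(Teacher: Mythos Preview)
Your plan follows the same architecture as the paper's proof: collapse each side $e$ to the attracting fixed point of $g_e$ on $\partial_\infty\lambda$, use that distinct sides go to distinct points (via Lemma~\ref{lem:stabilizer}), extend over $\partial p(\lambda)\cap\partial\OO$ by density of sides (Lemma~\ref{lem:density-of-edges}) and monotonicity, and then verify continuity on $\overline{p(\lambda)}$.

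There is, however, a real gap in your continuity argument at an ideal point $\eta\in\partial p(\lambda)\cap\partial\OO$. You propose to use, for a single side $e$ near $\eta$, the half-plane in $\lambda$ bounded by the geodesic with the same endpoints as $Q_\lambda(l_e)$, and ``let $e$ shrink toward $\eta$''. But the endpoints of $Q_\lambda(l_e)$ are $\partial^-g_e$ and $\partial^+g_e$: while $\partial^-g_e=Q_\lambda(e)\to Q_\lambda(\eta)$, the \emph{other} endpoint $\partial^+g_e$ is the far end of the axis and has no reason to converge to $Q_\lambda(\eta)$ (the ray $l_e$ can go deep into $p(\lambda)$ and exit at a point of $\partial\OO$ far from $\eta$). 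So these half-planes do not nest down to a point. The paper fixes this by taking sides $e_m^{+},e_m^{-}$ approaching $\eta$ from \emph{both} directions, using only short initial segments $l'_{e_m^\pm}\subset l_{e_m^\pm}$, and joining their free endpoints by an arc in $p(\lambda)$; the resulting barrier $\alpha_m$ has $Q_\lambda$-image with two ideal endpoints $Q_\lambda(e_m^+),Q_\lambda(e_m^-)$, both converging to $Q_\lambda(\eta)$, and these bound genuinely shrinking half-planes $H_m$.

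You also glide over continuity at points lying \emph{on} a side $e$. The paper handles this separately via a rectangular neighborhood foliated by the rays $l_{x'}$ for $x'\in e$ near $x$: Proposition~\ref{prop:local-dynamics}(3) says each $\pi(l_{x'})$ spirals onto $\alpha_e$, so $Q_\lambda$ sends the rectangle to a wedge-shaped region in $\lambda$ with the single ideal point $Q_\lambda(e)$. You invoke this asymptotic when defining $Q_\lambda(e)$ but should also cite it for continuity at side points, since your ``regions cut off by sides'' formulation does not address sequences approaching an interior point of a side from within $p(\lambda)$.
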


\begin{proof}

In the interior of $p(\lambda)$, we define $Q_\lambda=p^{-1}|_{p(\lambda)}$ in the interior of $p(\lambda)$. To extend $Q_\lambda$ to $\partial p(\lambda)$, we will first define the map on the sides of $p(\lambda)$ and then extend it to the entire $\partial p(\lambda)$.

Let $e$ be a side of $p(\lambda)$. Take $g_e\in\mathrm{Stab}(e)$, $x_e\in e$ and $l_e$ as in Proposition \ref{prop:local-dynamics}. As an element of $\pi_1(\Sigma)$, $g_e$ acts as a hyperbolic element on $\overline{\lambda}$ with a contracting fixed point $\partial^-g_e$ and a repelling fixed point $\partial^+g_e$ at infinity. We define $Q_\lambda$ on $e$ as the constant map to $\partial^-g_e$. Different sides are sent to different points in $\partial_\infty(\lambda)$ because of Lemma \ref{lem:stabilizer}.

Since $Q_\lambda(l_e)$ projects to a simple closed curve $\alpha_e$ in $\Sigma$, it is a quasi-geodesic in $\lambda$ with well-defined endpoints in $\partial_\infty\lambda$. Moreover, $\alpha_e$ represents the free homotopy class of $g_e$ up to taking the inverse, by Proposition \ref{prop:local-dynamics}. Therefore, the endpoints of $Q_\lambda(l_e)$ are $\partial^\pm g_e$. By the way we choose $g_e$ (item (2) of Proposition \ref{prop:local-dynamics}), it contracts $l_e$ near $e$. Therefore, if we orient $l_e$ to point towards $e$ and give $Q_\lambda(l_e)$ the induced orientation, the forward endpoint of $Q_\lambda(l_e)$ is $\partial^-g_e$. This shows $Q_\lambda$ is continuous at $x_e$ when restricted to $l_e\cup x_e$. This further assures that $Q_\lambda$ preserves the cyclic order of the boundary leaves, which is a consequence of the fact that $Q_\lambda$ is an orientation preserving homeomorphism in the interior. For example, one can argue as the following (indicated in Figure \ref{fig:cyclic-order}). Consider three sides $e_1,e_2$ and $e_3$ in clockwise order, and take $x_{e_i}$ and $l_{e_i}$ as before. Take a point $y_{e_i}\in l_{e_i}$ and let $l'_{e_i}\subset l_{e_i}$ be the segment between $x_{e_i}$ and $y_{e_i}$. We may assure $l'_{e_i}$ are pairwisely disjoint by taking $y_{e_i}$ close enough to $x_{e_i}$. Take any point $z\in p(\lambda)$ and connect $y_{e_i}$ to $z$ to get an embedded 3-prong $P$ with $z$ as the center and $x_{e_i}$ as the endpoints. The image $Q_\lambda(P)$ is an embedded 3-prong in $\overline{\lambda}$ with well-defined endpoints $Q_\lambda(e_i)$. The cyclic order of the edges of $R$ is preserved under $Q_\lambda$, assuring that $Q_\lambda(e_1), Q_\lambda(e_2)$ and $Q_\lambda(e_3)$ are arranged clockwisely.

\begin{figure}[h!]
\centering
\labellist
\pinlabel $z$ at 340 330
\pinlabel $Q_\lambda(z)$ at 1280 330
\endlabellist
\includegraphics[scale=.2]{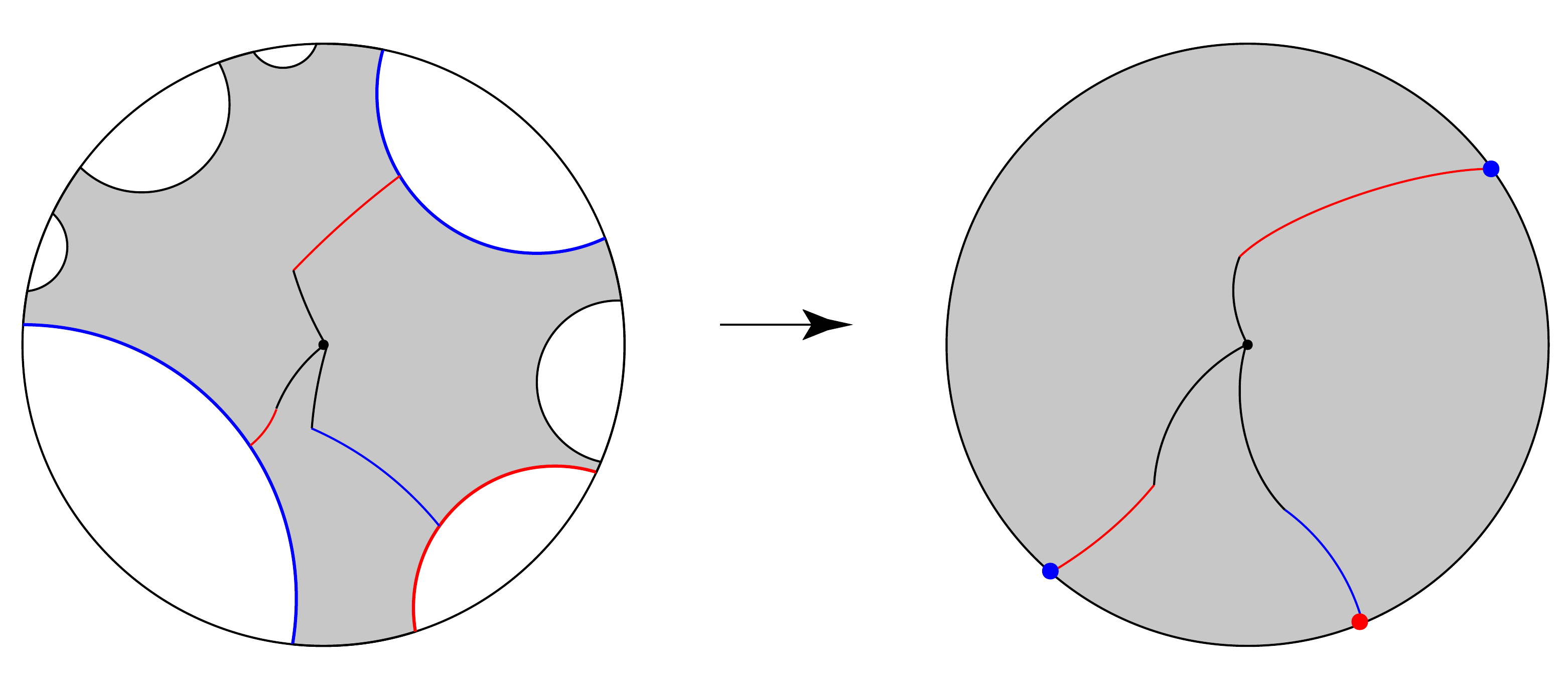}
\caption{}
\label{fig:cyclic-order}
\end{figure}

Moreover, if $\gamma$ is an element of $\pi_1(\Sigma)$, it translates $e$ to another side $\gamma e$ of $p(\lambda)$. We have $g_{\gamma e}=\gamma g_e\gamma^{-1}$, so $Q_\lambda(\gamma e)=\gamma Q_\lambda(e)$. By the minimality of the $\pi_1(\Sigma)$ action on $\partial_\infty\lambda$, the $Q_\lambda$-image of all the sides of $p(\lambda)$ are dense in $\partial_\infty\lambda$. Now there is a unique way to define $Q_\lambda$ on $\partial p(\lambda)$ such that it is continuous restricted to $\partial p(\lambda)$. 
That is, for any point $x\in\partial p(\lambda)$, one can find a sequence of sides $e_n$ converging to $x$ and define $Q_\lambda(x)$ as the limit of $Q_\lambda(e_n)$ by Lemma  \ref{lem:density-of-edges}. The limit exists and is independent of $e_n$ because $Q_\lambda$ is monotone and the image of edges is dense. 

To complete the proof of Lemma \ref{lem:c-continuous-extension}, what is left is to check that $Q_\lambda$ is continuous on $\overline{p(\lambda)}$.

For a side $e$ (which is assumed to be contained in a leaf of $\FF_\OO^s$ for concreteness) of $p(\lambda)$ and a point $x\in e$, we take a rectangular neighborhood of $x$ as the image of an embedding $\rho:(0,1)\times[0,1)\to \overline{p(\lambda)}$ such that
\begin{itemize}
\item $\rho(\frac{1}{2}, 0)=x$;
\item $\rho((0,1)\times \{0\})$ is contained in $e$;
\item for any $s\in[0,1)$, $\rho((0,1)\times\{s\})$ is contained in a leaf of $\FF_\OO^s$;
\item for any $t\in(0,1)$, $\rho(\{t\}\times(0,1))$ is contained in a leaf of $\FF_\OO^u$.
\end{itemize}
Such a neighborhood always exists because $\FF_\OO^s$ is regular on the side of $e$ that contains $p(\lambda)$.

Fix a rectangular neighborhood $RN(x)$ of $x$. By Proposition \ref{prop:local-dynamics}, we know that $Q_\lambda(\rho(\{t\}\times(0,1)))$ is asymptotic to $Q_\lambda(l_e)$, the latter being a quasi-geodesic ray in $\lambda$ since it is a lift of a simple closed curve in $\Sigma$. This shows that $Q_\lambda(RN(x))$ is a wedge-shape region in $\lambda$ with one ideal point $Q_\lambda(e)$ (Figure \ref{fig:boundary}).

\begin{figure}[h!]
\labellist
\pinlabel $p(\lambda)$ at 570 730
\pinlabel $Q_\lambda$ at 1030 520
\pinlabel $\lambda$ at 1600 730
\pinlabel $x$ at 300 440
\pinlabel $Q_\lambda(e)$ at 1300 50
\pinlabel $l_x$ at 700 400
\endlabellist
\includegraphics[scale=.15]{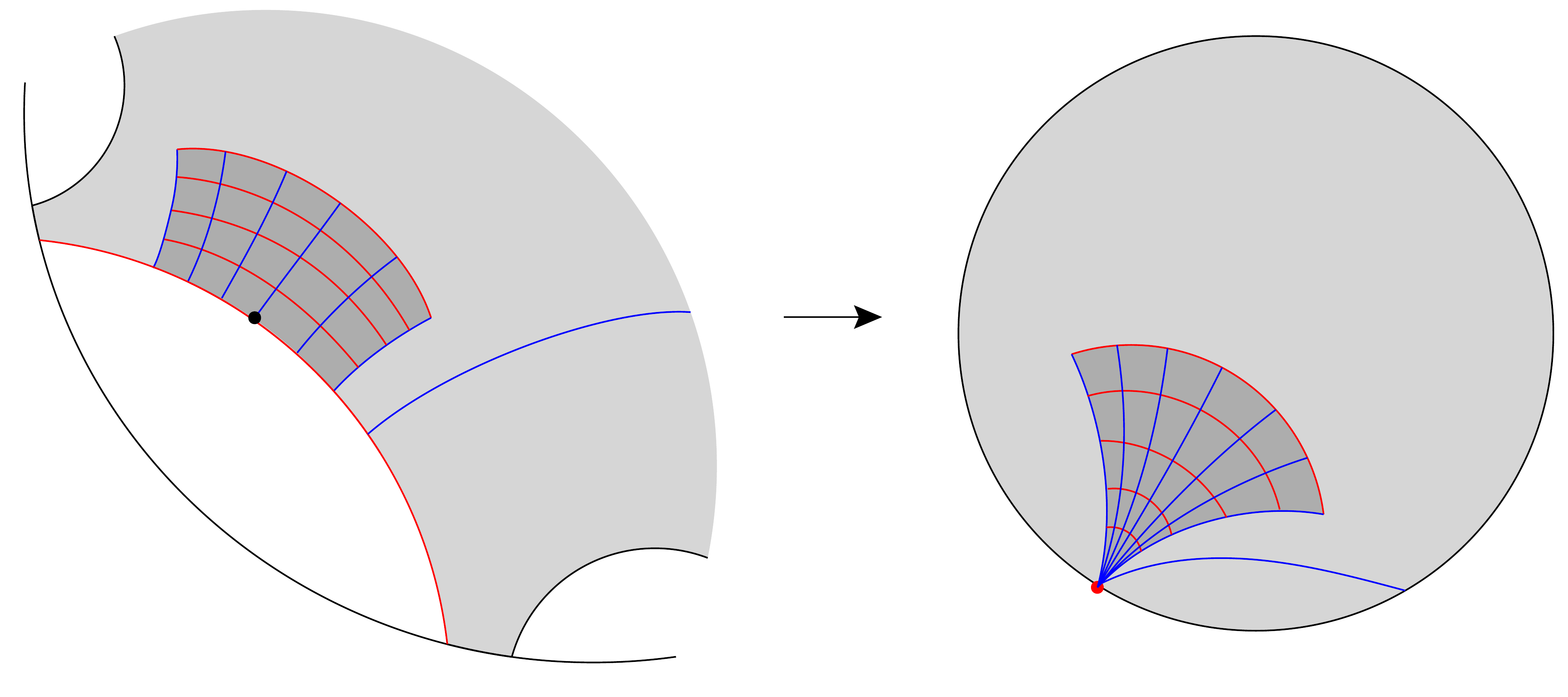}
\caption{The $Q_\lambda$-image of $\FF_\OO^s$-leaves near an unstable boundary leaf $e$}
\label{fig:boundary}
\end{figure}

Consider a sequence of points $x_n$ in $p(\lambda)$ converging to $x\in\partial p(\lambda)$. If $x$ is in a side $e$, we can take a rectangular neighborhood of $x$ that will eventually contain $x_n$. The shape of the rectangular neighborhoods under $Q_\lambda$ shows that $Q_\lambda(x_n)$ converge to $Q_\lambda(x)=Q_\lambda(e)$.

If $x\in\partial p(\lambda)\cap\partial\OO$, we can trap $Q_\lambda(x_n)$ using segments of $l_e$, forcing them to converge to the right points. The argument is similar to the one we use to prove that $Q_\lambda$ preserves the cyclic order of the sides. More precisely, take sequences of sides $\{e_m^+\}$ and $\{e_m^-\}$ of $p(\lambda)$ that approximate $x$ from two sides respectively (using Lemma \ref{lem:density-of-edges}). Fixing $m$, take a short segment $l'_{e_m^\pm}\subset l_{e_m^\pm}$ with one endpoint at $x_{e_m^\pm}$ so that $l'_{e_m^+}$ and $l'_{e_m^-}$ are disjoint. Connect the other endpoints of the two segments by a path in $p(\lambda)$, and denote the resulting path by $\alpha_m$. The image $Q_\lambda(\alpha_m)$ is an embedded line in $\wt{\lambda}$ with disjoint endpoints, separating $\wt{\lambda}$ into two half planes. We let the half plane containing $Q_\lambda(x)$ be $H_m(x)$. The fact that $Q_\lambda|_{p(\lambda)}$ is an orientation preserving homeomorphism guarantees that $Q_\lambda(x_n)$ eventually enter $H_m(x)$ for all $m$. We can arrange $H_m(x)$ to be nested so that $\cap_m H_m(x)$ has exactly one ideal point $Q_\lambda(x)$. Since $\{x_n\}$, hence $\{Q_\lambda(x_n)\}$, escape every compact set, we know that $\{Q_\lambda(x_n)\}$ must limit to $Q_\lambda(x)$.

\begin{figure}[!htb]
\labellist
\pinlabel $x$ at 530 20
\pinlabel $Q_\lambda(x)$ at 1370 -10
\endlabellist
\includegraphics[scale=.2]{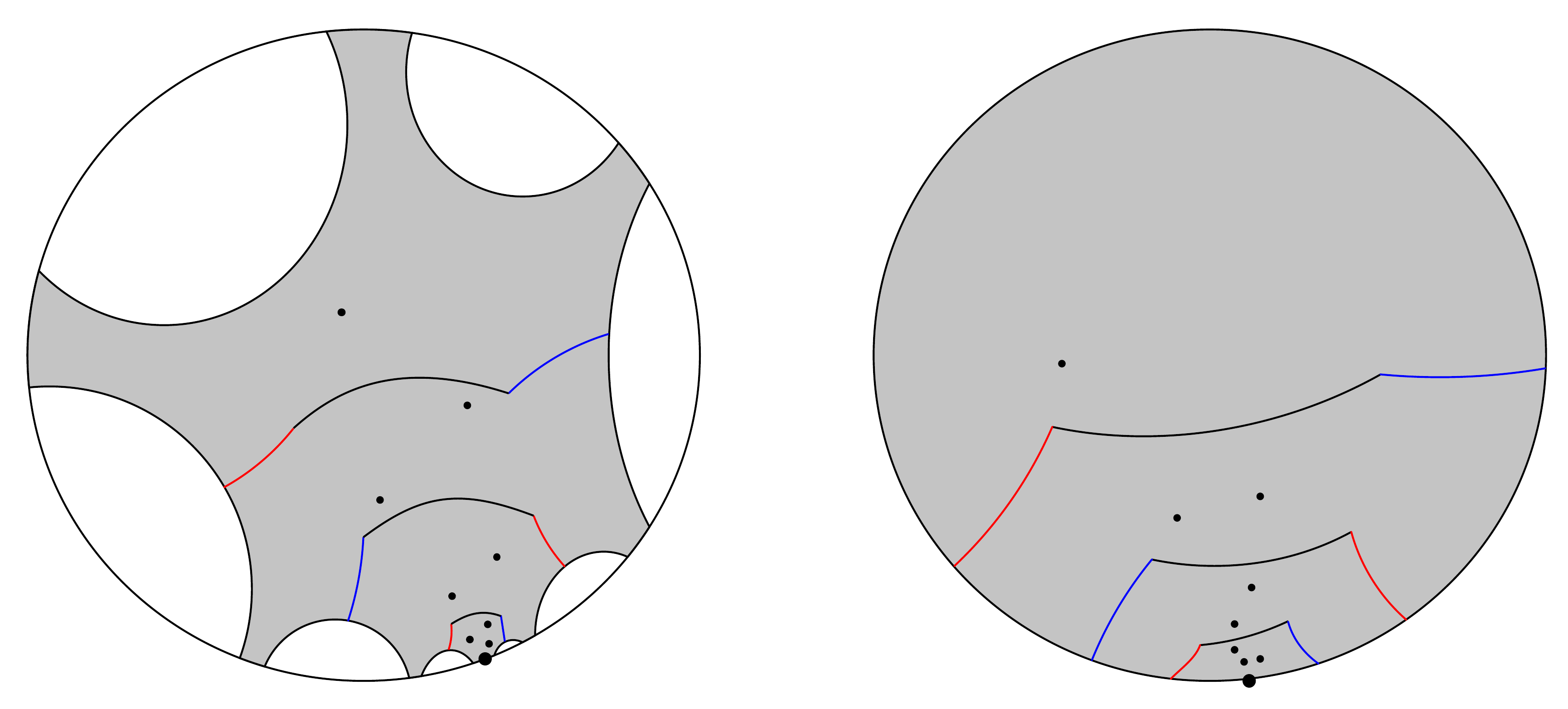}
\caption{}\label{fig:continuity}
\end{figure}

We have proved that $Q_\lambda$ is continuous on $\overline{p(\lambda)}$, finishing the proof of Lemma \ref{lem:c-continuous-extension}.

\end{proof}

\subsection*{Shadows of type-1 leaves}
We now assume $\lambda$ is a type-1 leave contained in a product region $\wt{\Omega}\cong \wt{L}\times\R$ and $\lambda$ covers a non-compact leaf identified with $L$. The first part of the following theorem is a special case of \cite[Proposition 4.1]{Fenley2009}. We gave another proof of this fact in our setting for the sake of completeness. Similar to the case of type-0 leaves, we will use $\partial p(\lambda)$ to denote the boundary of $p(\lambda)$ in $\overline{\OO}$.

\begin{lem}\label{lem:nc-shadow}
    The shadow $p(\lambda)$ is a proper open subset of $\OO$, bounded by periodic regular leaves of $\FF_\OO^s$ or $\FF_\OO^u$, or faces of singular leaves which are regular on the side containing $p(\lambda)$. We call a component of the boundary of $p(\lambda)$ (as a subset of $\OO$) a side of $p(\lambda)$.
        
    Moreover, if a side $e$ of $p(\lambda)$ is contained in a leaf of $\FF_\OO^s$, then there is a type-0 leaf $\mu$ negatively adjacent to $\lambda$ (see Section \ref{subsec:depth-1} for the definition) such that $e$ is also a side of $p(\mu)$. If $e$ is contained in a leaf of $\FF_\OO^u$, then there is a type-0 leaf $\mu$ positively adjacent to $\lambda$ such that $e$ is also a side of $p(\mu)$.
\end{lem}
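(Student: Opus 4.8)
The plan is to transport the structure of type-0 shadows established in Lemma~\ref{lem:c-continuous-extension} down into the product region containing $\lambda$. First I would fix a homeomorphism $\wt\Omega\cong\wt L\times\R$ with $\lambda=\wt L\times\{t_0\}$ and the $\wt\phi$-orbits equal to the fibers $\{x\}\times\R$, so that $p|_\lambda$ is again a homeomorphism onto $p(\lambda)$ (each orbit meets $\lambda$ at most once by transversality and coorientation). Let $\pi_0:\lambda\to L$ be the covering map; identifying $p(\lambda)$ with a universal cover of $L$ via $\pi=\pi_0\circ(p|_\lambda)^{-1}$, the subgroup of $\pi_1(M)$ stabilizing $p(\lambda)$ is identified with $\pi_1(L)$.

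The first claim is that $p(\lambda)$ is a proper open subset of $\OO$: openness is clear since $p$ is an open map and $\lambda$ is open in $\wt M$ relative to the foliation, and properness follows because $\lambda$ does not meet a full $\wt\phi$-orbit (the orbit of a point whose positive or negative iterates escape to the boundary leaves never returns to $\lambda$), so $p(\lambda)\ne\OO$. The key step is identifying the sides. Here I would use the adjacency structure from Section~\ref{subsec:depth-1}: $\lambda$ sits in $\wt\Omega$ between the type-0 boundary leaves in $\partial^-\wt\Omega$ and $\partial^+\wt\Omega$. By the description of escaping sets, a $\wt\phi$-orbit through $x\in\wt L$ that does \emph{not} cross $\lambda$ must escape, in positive or negative time, to a leaf of $\partial^\pm\wt\Omega$; thus $\OO\setminus p(\lambda)$ is the union of the complements $\OO\setminus p(\mu)$ over $\mu\in\partial\wt\Omega$. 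Since each $p(\mu)$ for a type-0 leaf $\mu$ is an open set whose sides are regular $\FF_\OO^{s}$- or $\FF_\OO^{u}$-leaves or regular faces of singular leaves (Fenley's \cite[Proposition 4.3]{Fenley1999823}), and the leaves of $\FF_\OO^{s/u}$ are the images of $\wt\FF^{s/u}$-leaves which, along a boundary leaf $\mu$, are forced to be \emph{stable} on the $\phi$-negative side and \emph{unstable} on the $\phi$-positive side (because forward-asymptotic orbits escape to $\partial^+$ and backward-asymptotic orbits escape to $\partial^-$, this being exactly the mechanism that makes $\UU^\pm$ the escaping sets), one gets that a boundary component of $p(\lambda)$ which is an $\FF_\OO^s$-leaf comes from a negative boundary leaf $\mu$ and one which is an $\FF_\OO^u$-leaf comes from a positive boundary leaf.

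More precisely, for the ``moreover'' clause: suppose $e$ is a side of $p(\lambda)$ contained in a leaf of $\FF_\OO^s$. Pick $x$ on the $p(\lambda)$-side of $e$ and consider points $y$ on $e$; the orbit $\wt\phi(p^{-1}(y))$ does not cross $\lambda$, and since it lies in the stable leaf of a point asymptotic forward to the orbit through $x$, its positive ray escapes out the positive boundary $\partial^+\wt\Omega$ while staying on the far side of $\lambda$ — so the obstruction preventing $e$ from being in $p(\lambda)$ must instead be that $e$ lies in $p(\mu)$ for some leaf $\mu\in\partial^-\wt\Omega$, i.e. $\mu$ is negatively adjacent to $\lambda$ (write $\mu\gtrsim\lambda$); and then $e$ is also a side of $p(\mu)$ since $p(\mu)$ and $p(\lambda)$ are glued along $e$ in $\OO$. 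The case $e\subset\FF_\OO^u$ is symmetric with $\mu\in\partial^+\wt\Omega$, i.e. $\mu\lesssim\lambda$. The periodicity of the sides then follows from the type-0 case: each side $e$ is a side of some $p(\mu)$ with $\mu$ type-0, so Proposition~\ref{prop:local-dynamics} applies verbatim and gives the stabilizing element $g_e$ expanding $l_e$ and contracting $e$.

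The main obstacle I anticipate is pinning down exactly which $\FF_\OO^{s/u}$-leaves appear as sides and matching them to the correct adjacent type-0 leaf — that is, making rigorous the dichotomy ``stable sides $\leftrightarrow$ negative adjacency, unstable sides $\leftrightarrow$ positive adjacency.'' The subtlety is that a single $\wt\phi$-orbit not crossing $\lambda$ could \emph{a priori} be separated from $\lambda$ by more than one boundary leaf, or escape in a direction not obviously tied to the stable/unstable dichotomy; one must use the asymptotic behavior of $\wt\FF^{s/u}$-leaves along the boundary leaves (forward/backward asymptoticity) together with the product structure $\wt\Omega\cong\wt L\times\R$ to see that the side's foliation type determines the sign of the adjacency. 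Once that dictionary is set up carefully, the rest is bookkeeping with the leaf space $\Lambda$ and invoking Fenley's and Cooper–Long–Reid's structural results.
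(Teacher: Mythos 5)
Your overall strategy---reduce to the type-0 case by showing that every side of $p(\lambda)$ is a side of $p(\mu)$ for an adjacent type-0 leaf $\mu$, and then invoke Proposition \ref{prop:local-dynamics} for periodicity and regularity---is the right one and is exactly what the paper does. But the step that is supposed to carry out this reduction has a genuine gap. The identity ``$\OO\setminus p(\lambda)=\bigcup_{\mu\in\partial\wt{\Omega}}(\OO\setminus p(\mu))$'' is false: crossing \emph{any} boundary leaf $\mu$ forces an orbit into $\wt{\Omega}$ and hence across $\lambda$, so $p(\mu)\subset p(\lambda)$ for every $\mu$, and since an orbit meeting $\wt{\Omega}$ crosses at most one positive and one negative boundary leaf (and crosses none if it enters through a non-escaping point of $\wt{L}$), the right-hand union is strictly larger than $\OO\setminus p(\lambda)$. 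Similarly, an orbit that does not cross $\lambda$ never enters $\wt{\Omega}$ at all, so it cannot ``escape out the positive boundary $\partial^+\wt{\Omega}$''; and if $e$ were contained in $p(\mu)$ it would lie in the interior of $p(\mu)$, not be a side of it. The net effect is that the dictionary ``stable side $\leftrightarrow$ negative adjacency'' is asserted rather than derived.

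The missing idea is a control on the itinerary of a boundary orbit in the dual tree $\Lambda^*$. For $z\in\partial p(\lambda)$ the orbit $\sigma_z=p^{-1}(z)$ cannot meet any product region or type-0 leaf incomparable to $\wt{\Omega}(\lambda)$ (otherwise all nearby orbits would miss $\wt{\Omega}(\lambda)$, contradicting that $z$ is a boundary point), so the oriented path it traces in the tree $\Lambda^*$ stays among vertices comparable to $\wt{\Omega}(\lambda)$ while avoiding that vertex; such a path cannot be bi-infinite, so $\sigma_z$ is eventually trapped in a single product region $\wt{\Omega}_0$ in, say, forward time, with $\wt{\Omega}(\lambda)$ on its positive side. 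Note that $\wt{\Omega}_0$ may sit arbitrarily deep in the tree---the boundary orbit need not lie ``just outside'' a boundary leaf of $\wt{\Omega}$, which is why the argument must run through $\Lambda^*$ rather than through $\partial\wt{\Omega}$ alone. The type-0 leaf $\mu$ adjacent to $\wt{\Omega}(\lambda)$ on the path from $\wt{\Omega}_0$ then acts as a gate: an orbit through $\wt{\Omega}_0$ meets $\lambda$ if and only if it meets $\mu$, so near $z$ the boundaries of $p(\lambda)$ and $p(\mu)$ coincide; and because the orbits of $\wt{\FF}^s(\sigma_z)$ are forward asymptotic to $\sigma_z$, they too enter $\wt{\Omega}_0$ and miss $\mu$, which shows both that the entire side $e_z$ of $p(\mu)$ through $z$ lies in $\partial p(\lambda)$ and that this side is stable precisely when $\mu$ is negatively adjacent. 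With that reduction in place, your final step (regularity and periodicity of sides via the type-0 case) goes through as you describe.
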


\begin{proof}
Suppose $z\in\OO$ is a boundary point of $p(\lambda)$. Let $\sigma_z=p^{-1}(z)$ be the $\wt{\phi}$-orbit that projects to $z$. The orbit $\sigma_z$ cannot intersect any product region that is incomparable to $\wt{\Omega}(\lambda)$, which is the product region containing $\lambda$. This is because if $\sigma_z$ intersects such a product region $\wt{\Omega}$, so are the $\wt{\phi}$-orbits close enough to $\sigma_z$, which forces them to be disjoint from $\wt{\Omega}(\lambda)$. Then $\sigma_z$ will not be in the boundary of $p(\lambda)$. Similarly, it cannot intersect any type-0 leaf incomparable to $\wt{\Omega}(\lambda)$ either.

Since $p(\lambda)$ is open, $z$ is not contained in $p(\lambda)$. The orbit $\sigma_z$ induces an oriented path $\gamma_z$ in the dual graph $\Lambda^*$ (Section \ref{sec:preliminaries}). Every vertex in $\gamma_z$ is comparable to $\wt{\Omega}(\lambda)$ by the previous paragraph, but $\wt{\Omega}(\lambda)$ is not in $\gamma_z$. Since $\Lambda^*$ is simply connected, the path $\gamma_z$ cannot be bi-infinite. That means the orbit $\sigma_z$ will eventually stay in one product region in either the positive or the negative direction.

If $\wt{\Omega}(\lambda)$ is in the positive side of $\sigma_z$, then $\sigma_z$ eventually stays in a product region, denoted by $\wt{\Omega}_0$, in the positive direction. Take the unique shortest oriented path $\gamma$ from $\wt{\Omega}_0$ to $\wt{\Omega}(\lambda)$ in $\Lambda^*$ and let $\wt{\Omega}_1$ be the last product region in $\gamma$ before $\wt{\Omega}(\lambda)$. There is a unique type-0 leaf $\mu$ that is positively adjacent to $\wt{\Omega}_1$ and negatively adjacent to $\wt{\Omega}(\lambda)$. Any $\wt{\phi}$-orbit intersecting both $\wt{\Omega}_0$ and $\lambda$ has to enter $\wt{\Omega}(\lambda)$ via $\mu$. Conversely, every $\wt{\phi}$-orbit intersecting $\mu$ also meets $\lambda$. This shows that if an orbit $\sigma$ intersects $\wt{\Omega}_0$, then $p(\sigma)\in p(\mu)$ if and only if $p(\sigma)\in p(\lambda)$. Since $p(\wt{\Omega}_0)$ is open, we have also showed that if $x$ is in $p(\wt{\Omega}_0)$, then $x\in\partial p(\lambda)$ if and only if $x\in\partial p(\mu)$.

Any orbit close enough to $\sigma_z$ will intersect $\wt{\Omega}_0$ as well, so $z$ is in the boundary of $p(\mu)$. Let $e_z$ be the side of $p(\mu)$ containing $z$. Note that all the orbits in the same $\wt{\FF}^s$-leaf as $\sigma_z$ are positively asymptotic to $\sigma_z$. In particular, they intersect $\wt{\Omega}_0$ and are disjoint from $\mu$. We see that $e_z$ is contained in a leaf of $\FF_\OO^s$, and using a similar argument to the last paragraph we have $e_z\subset\partial p(\lambda)$.

To summarize, we have showed the following: for a point $z\in\partial p(\lambda)$, if $\wt{\Omega}(\lambda)$ is in the positive side of $p^{-1}(z)$, then there is type-0 leaf $\mu$ negatively adjacent to $\lambda$ and a side $e_z\in \partial p(\mu)$ such that $z\in e_z$ and $e_z\subset \partial p(\lambda)$. One can apply the same argument to the case where $\wt{\Omega}(\lambda)$ is in the negative side of $p^{-1}(z)$, finishing the proof.

\end{proof}

Similar to the case of type-0 leaves, we define $\overline{p(\lambda)}$ to be $p(\lambda)\cup \partial p(\lambda)$. The following lemma is the type-1 version of Lemma \ref{lem:c-continuous-extension}. 

\begin{lem}\label{lem:nc-continuous-extension}
Let $\lambda$ be a type-1 leaf of $\wt{\FF}$. Then the homeomorphism $p^{-1}|_{p(\lambda)}:p(\lambda)\to\lambda$ extends continuously to a map $Q_\lambda$ from $\overline{p(\lambda)}$ to $\partial_\infty \lambda$. The map restricted to $\partial{p(\lambda)}$ is a monotone map to $\partial_\infty\lambda$ with core $\overline{p(\lambda)}\cap\partial\OO$.
\end{lem}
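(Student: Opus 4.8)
The plan is to mimic the proof of Lemma~\ref{lem:c-continuous-extension}, replacing the role of $\pi_1(\Sigma)$ acting minimally on $\partial_\infty\lambda$ with the corresponding statement for the standard hyperbolic structure on the depth-one leaf $L$ and its covering group. First I would fix the identification of $\lambda$ with $\hyp^2$ coming from the standard hyperbolic metric on $L$ (which exists and is standard by the discussion in Section~\ref{subsec:depth-1}), and recall that the covering group $\pi_1(L)$, realized as the subgroup of $\pi_1(M)$ stabilizing $\lambda$ and $\wt\Omega(\lambda)$, acts on $\partial_\infty\lambda$ with dense orbits (a Fuchsian group of the second kind has a limit set which is a Cantor set, but on that limit set the action is minimal; more importantly the endpoints of lifts of the infinitely many closed curves $\alpha_e$ are dense in the limit set, and I should be careful here since $L$ is infinite type). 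So the first subtlety is that $\partial_\infty\lambda$ is not all of a circle's worth of conical points — but since $L$ has the standard structure with bounded injectivity radius, its convex core has geodesic boundary and $\partial_\infty\lambda$ decomposes accordingly; I will use that the $Q_\lambda$-images of sides accumulate onto exactly the part of $\partial_\infty\lambda$ that is the limit set, and the complementary arcs (boundary leaves of the convex hull, coming from the ends) get filled in by monotonicity.

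The key steps, in order, are: (1) Use Lemma~\ref{lem:nc-shadow} to identify each side $e$ of $p(\lambda)$ with a side of $p(\mu)$ for an adjacent type-0 leaf $\mu$; the stabilizer of $e$ in $\pi_1(M)$ is $\Z$ generated by some $g_e$, and since a $\wt\phi$-orbit hits $\mu$ iff it hits $\lambda$ (for orbits near $e$, as shown in that proof), the curve $l_e = \FF_\OO^{u/s}(x_e)\cap p(\lambda)$ projects under $\pi$ to a closed geodesic-homotopic curve $\alpha_e$ in $L$ whose free homotopy class is $g_e$; hence $g_e$ acts as a hyperbolic isometry of $\lambda=\hyp^2$ with attracting/repelling fixed points $\partial^\pm g_e\in\partial_\infty\lambda$. (2) Define $Q_\lambda$ on the interior of $p(\lambda)$ as $p^{-1}$, on each side $e$ as the constant $\partial^- g_e$, and check via Proposition~\ref{prop:local-dynamics} (applied through the adjacent $\mu$) that $Q_\lambda$ restricted to $l_e\cup\{x_e\}$ is continuous and that cyclic order of sides is preserved — these arguments transfer verbatim from the type-0 case using the 3-prong picture in Figure~\ref{fig:cyclic-order}. (3) Equivariance: $g_{\gamma e}=\gamma g_e\gamma^{-1}$ for $\gamma\in\pi_1(L)$, so $Q_\lambda(\gamma e)=\gamma Q_\lambda(e)$; combined with density of the $Q_\lambda$-images of sides in $\partial_\infty\lambda$ (here invoking that the endpoints of lifts of closed curves, equivalently the limit set points, are dense in $\partial_\infty\lambda$ for the standard structure — this is where the infinite-type geometry must be handled), one gets a unique monotone continuous extension to $\partial p(\lambda)$ by the nowhere-density of $\partial p(\lambda)\cap\partial\OO$. (4) Prove $\partial p(\lambda)\cap\partial\OO$ is nowhere dense, the analogue of Lemma~\ref{lem:density-of-edges}: the same argument applies since $l_{e}$ still divides $p(\lambda)$ into two pieces one of which contains no $\pi_1(L)$-translate of $l_e$, impossible for a lift of a closed curve in $L$. (5) Finally, continuity on all of $\overline{p(\lambda)}$: for $x$ in a side use the rectangular-neighborhood argument (Figure~\ref{fig:boundary}), using that $\FF_\OO^{s/u}$ is regular on the side of $e$ containing $p(\lambda)$; for $x\in\partial p(\lambda)\cap\partial\OO$ use the nested-half-planes trapping argument (Figure~\ref{fig:continuity}) verbatim.

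The main obstacle I anticipate is step (3)/(5) in the guise of the infinite-type geometry of $L$: unlike the closed surface $\Sigma$, the action of $\pi_1(L)$ on $\partial_\infty\lambda$ is not minimal, so I must argue directly that the $Q_\lambda$-images of the sides of $p(\lambda)$ are dense in $\partial_\infty\lambda$. The right way to do this is to show every boundary leaf of the convex hull of $\lambda$ (equivalently every geodesic boundary component of a lift of the convex core of $L$) is a limit of axes of elements $g_e$ — this follows because the ends of $L$ are ``end-periodic'' with compact cores, so closed curves spiral into the convex core boundary and their lifts accumulate everywhere on $\partial_\infty\lambda$; alternatively one argues that the only way $Q_\lambda$ could fail to surject is if some ideal point of $\lambda$ were separated from all side-images, and then trace back through the flow to contradict the structure of $\wt\Omega(\lambda)\cong\wt L\times\R$. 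Once density is in hand, monotonicity together with Lemma~\ref{lem:nc-shadow} linking each side to an adjacent type-0 shadow lets all the type-0 arguments go through with only cosmetic changes.
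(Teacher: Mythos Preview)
Your proposal has a genuine gap in step (1), which propagates through steps (2)--(4). You assume that the stabilizer element $g_e$ lies in $\pi_1(L)$ and hence acts on $\lambda$ as a hyperbolic isometry, so that $Q_\lambda(l_e)$ covers a closed curve $\alpha_e$ in $L$. This is not true in general. The element $g_e$ comes from Proposition~\ref{prop:local-dynamics} applied to the adjacent type-0 leaf $\mu$, so $g_e\in\pi_1(\Sigma)$ where $\Sigma$ is the compact leaf $\mu$ covers. Viewed inside $\pi_1(\Omega(\lambda))\cong\Z\ltimes\pi_1(L)$, the element $g_e$ may have nontrivial $\Z$-component; in that case it does not preserve the leaf $\lambda$ in $\wt M$ at all, and the induced action $x\mapsto Q_\lambda(g_e\cdot p(x))$ on $\lambda$ is a lift of a power of the monodromy $h$, not an isometry. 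Consequently $Q_\lambda(l_e)$ is not a lift of a closed curve in $L$, $\partial^\pm g_e$ is undefined in your sense, and your definition $Q_\lambda(e)=\partial^- g_e$ breaks down.

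The paper handles this by invoking an external result of Fenley (\cite[Theorem~C]{Fenley2009}) that every ray of $\wt\FF^{s/u}\cap\lambda$ has a well-defined endpoint in $\partial_\infty\lambda$, and defines $Q_\lambda(e)$ directly as the forward endpoint of $Q_\lambda(l_e)$. The rectangular-neighborhood picture is then justified by a distance-comparison argument between $\lambda$ and $\mu$ using forward asymptoticity of orbits in the same $\wt\FF^s$-leaf. The injectivity of $e\mapsto Q_\lambda(e)$ (different sides map to different points) requires a separate argument analyzing common fixed points of pairs $g_{e_1},g_{e_2}$ in the $\Z\ltimes\pi_1(L)$ action on $\partial_\infty\lambda$, using atoroidality of $h$ and the extension of lifts of $h$ to $\partial_\infty\lambda$ from \cite{CCHomot}; your appeal to Lemma~\ref{lem:stabilizer} alone is insufficient here since the two elements could in principle share a boundary fixed point without sharing an interior one. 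Finally, your worry about minimality is misplaced: since the Candel metric gives $L$ bounded injectivity radius, the limit set of $\pi_1(L)$ is all of $\partial_\infty\lambda$, so the $\pi_1(L)$-action is genuinely minimal and density follows immediately once $Q_\lambda$ is $\pi_1(L)$-equivariant.
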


\begin{proof}

We define $Q_\lambda=(p|_\lambda)^{-1}$ in $p(\lambda)$ and extend $Q_\lambda$ to the boundary following the same strategy as in the proof of Lemma \ref{lem:c-continuous-extension}.

Let $e$ be any side of $p(\lambda)$, and we assume it to be a leaf of $\FF_\OO^s$ for concreteness. By Lemma \ref{lem:nc-shadow} $e$ is also a side of the shadow of a type-0 leaf $\mu$ negatively adjacent to $\lambda$. Take $g_e$, $x_e\in e$ and $l_e$ as in Proposition \ref{prop:local-dynamics}. We orient $l_e$ to be pointing towards $e$. By \cite[Theorem C]{Fenley2009}, each ray of $\wt{\FF}^s\cap\lambda$ or $\wt{\FF}^u\cap\lambda$ has a well-defined endpoint at $\partial_\infty\lambda$. In particular, $Q_\lambda(l_e)$ has a well-defined forward endpoint at infinity.

Let $x'\in e$ be a point different from $x_e$ and set $l'$ to be the intersection of $p(\mu)$ and the ray of $\FF_\OO^u(x')$ containing $x'$. We orient $l'$ to be pointing towards $e$ as well. By Proposition \ref{prop:local-dynamics}, $Q_\mu(l')$ is forward asymptotic to $Q_\mu(l_e)$ under the orientation induced from $l'$ and $l_e$. Since $\FF_\OO^s$ is regular near $e$ on the side of $p(\lambda)$, there is a pair of points $p_e\in l_e$ and $p'\in l'$ so that $p'\in\FF_\OO^s(p_e)$. Let $\sigma_e$ and $\sigma'$ be the $\wt{\phi}$-orbits corresponding to $p_e$ and $p'$ respectively. Since they lie in the same leaf of $\wt{\FF}^s$, $\sigma_e$ and $\sigma'$ are positively asymptotic. It follows that the distance between of $\sigma_e\cap\lambda$ and $\sigma'\cap\lambda$ is bounded by the distance between $\sigma_e\cap\mu$ and $\sigma'\cap\mu$ up to a uniform constant multiple. In other words, $d_\lambda(Q_\lambda(p_e),Q_\lambda(p'))$ is coarsely bounded by $d_\mu(Q_\mu(p_e),Q_\mu(p'))$. Moreover, the pair $\{p_e, p'\}$ can be chosen arbitrarily close to $e$. Hence, $Q_\lambda(l')$ and $Q_\lambda(l_e)$ are forward asymptotic. If $RN(x')$ is a rectangular neighborhood of $x'$ in $p(\mu)$, then we have showed that $Q_\lambda(RN(x'))$ is a wedge-shape domain in $\lambda$ meeting $\partial_\infty\lambda$ at exactly one point. Roughly speaking, this means Figure \ref{fig:boundary} is also a correct picture when $\lambda$ is a type-1 leaf.

We now define $Q_\lambda$ on each side $e$ of $p(\lambda)$ to be the constant map to the forward endpoint of $Q_\lambda(l_e)$. 

\begin{claim}
Different sides are mapped to different point by $Q_\lambda$.
\end{claim}

\begin{proof}[Proof of the claim]

Recall that $\wt{\Omega}(\lambda)$ is the product region containing $\lambda$, and it covers a fibered region $\Omega(\lambda)$ in $M$ with fiber $L$ and the monodromy $h:L\to L$. Since $M$ is atoroidal, $h$ is an atoroidal end-periodic map. The fundamental group of $\Omega(\lambda)$ is isomorphic to the semidirect product $\Z\ltimes\pi_1(L)$ with $\Z$ acting on $\pi_1(L)$ by $h_*$. The group $\pi_1(\Omega(\lambda))$ stabilizes $p(\lambda)$ and we can define an action of $\pi_1(\Omega(\lambda))$ on $\lambda$ by $g(x):=Q_\lambda\circ g\circ p(x)$ for $g\in\pi_1(\Omega(\lambda))$ and $x\in\lambda$. If the element $g$ has a trivial $\Z$-factor, then the action of $g$ is a covering transformation. Otherwise, $g$ acts as a lift of some power of the monodromy $h$. 

Let $e_1$ and $e_2$ be different sides of $p(\lambda)$, and take $g_{e_i}$ and $x_{e_i}$ be as before for $i=1,2$. By Proposition \ref{lem:nc-shadow}, there is a type-0 leaf $\mu_i$ adjacent to $\lambda$ so that $e_i$ is a side of $p(\mu_1)$. The type-0 leaf $\mu_i$ covers a compact leaf $\Sigma_i\subset\partial\Omega(\lambda)$. The element $g_{e_i}$, being a deck transformation for the covering map $\mu_i\to\Sigma_i$, can be viewed as an element of $\pi_1(\Omega(\lambda))$. Therefore, $g_{e_i}$ stabilizes $p(\lambda)$ and acts on $\lambda$ either as a hyperbolic isometry or as a lift of some power of $h$. In both cases, the action extends continuously to an automorphism of $\partial_\infty{\lambda}$ by \cite{CCHomot}. Since $g_{e_i}$ stabilizes $l_{e_i}$ in $p(\lambda)$, we know $g_{e_i}$ stabilizes $Q_\lambda(l_{e_i})$ in $\lambda$, hence also the point $Q_\lambda(e_i)$ by the definition of $Q_\lambda(e_i)$.

Suppose for contradiction that $Q_\lambda(e_1)=Q_\lambda(e_2)=:q$. Then $g_{e_1}$ and $g_{e_2}$ fixes the same point $q$ on $\partial \lambda$. Note that $g_{e_1}$ and $g_{e_2}$ represents different elements in $\pi_1(\Omega(\lambda))$ and not a power of the other by Lemma \ref{lem:stabilizer}. Also note that for any lift $\wt{h}:\lambda\to\lambda$ of some power of the monodromy $h$, the action of $\wt{h}$ on $\partial\lambda$ will not fix any hyperbolic fixed points of $\pi_1(L)$ because $h$ is atoroidal. Therefore, if one of $g_{e_i}$ is a hyperbolic isometry, then $g_{e_1}$ and $g_{e_2}$ have no common fixed point on $\partial_\infty\lambda$. If both of them are a lift of some power of $h$, up to taking powers we may assume that they are lifts of the same power of $h$. Then there is an element $\gamma\in\pi_1(\lambda)$ such that $g_{e_1}=\lambda g_{e_2}$. The common fixed point $q$ will also be fixed by $\lambda$, a contradiction. We have proved that $g_{e_1}$ and $g_{e_2}$ do not have common fixed points, which indicates that $Q_\lambda(e_1)\neq Q_\lambda(e_2)$.
\end{proof}

The map $Q_\lambda$ preserves the cyclic order of the sides by the same reason as in the proof of \ref{lem:c-continuous-extension}, and the image is dense by the minimality of the $\pi_1(L)$-action on $\partial_\infty\lambda$. Indeed, the hyperbolic structure on $L$ has bounded injectivity radius, so the limit set of $\pi_1(L)$ is the entire circle at infinity. 

We also have the following lemma, analogous to Lemma \ref{lem:density-of-edges}.

\begin{lem}\label{lem:nc-density-of-edges}
The intersection $\partial p(\lambda)\cap \partial\OO$ is nowhere dense in $\partial\OO$.
\end{lem}

\begin{proof}
Let $e$ be a side of $p(\lambda)$. By Lemma \ref{lem:nc-shadow}, $e$ is a side of a shadow of a type-0 leaf. So we can take $g_e$ as in Proposition \ref{prop:local-dynamics}. By the discussion above, $g_e$ stabilizes $p(\lambda)$, and the action near $e$ has the desired expanding-contracting dynamics because of Proposition \ref{prop:local-dynamics}. The rest of the proof is the same as that of Lemma \ref{lem:density-of-edges}.
\end{proof}

Now the map $Q_\lambda$ can be extended to $\partial p(\lambda)$ continuously, and the extension is continuous by the nice property of rectangular neighborhoods in type-1 leaves and the same argument as the proof of Lemma \ref{lem:c-continuous-extension}.

\end{proof}

We record a lemma that will be useful in later sections.

\begin{lem}\label{lem:disjoint-edges}
If $\mu$ is a type-0 leaf adjacent to a type-1 leaf $\lambda$, $e_\lambda$ is a side of $p(\lambda)$ and $e_\mu$ is a side of $p(\mu)$, then either $e_\lambda=e_\mu$ or $\partial e_\lambda\cap\partial e_\mu=\varnothing$ holds. Here the boundary $\partial e$ of a side $e$ is the set of endpoints of $e$ on $\partial\OO$.
\end{lem}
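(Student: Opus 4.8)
The plan is to argue by contradiction, supposing that $e_\lambda \neq e_\mu$ but $\partial e_\lambda \cap \partial e_\mu \neq \varnothing$, say both contain the ideal point $\eta \in \partial\OO$. First I would note that both $e_\lambda$ and $e_\mu$ are contained in leaves of $\FF_\OO^s$ or $\FF_\OO^u$ (or faces of singular leaves), and by Lemma \ref{lem:nc-shadow} the side $e_\lambda$ of $p(\lambda)$ is simultaneously a side of $p(\mu')$ for some type-0 leaf $\mu'$ adjacent to $\lambda$ on the appropriate side; similarly $e_\mu$ is a genuine side of the type-0 shadow $p(\mu)$. Thus in either case we may invoke Proposition \ref{prop:local-dynamics}: there are generators $g_{e_\lambda}$ and $g_{e_\mu}$ of the (infinite cyclic) stabilizers, each contracting its respective side towards a fixed point $x_{e_\lambda}$, $x_{e_\mu}$, and expanding the transverse ray $l_{e_\lambda}$, $l_{e_\mu}$ of the opposite foliation.

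The key step is to analyze the three cases according to whether $e_\lambda, e_\mu$ lie in the same or in transverse singular foliations. If $e_\lambda$ and $e_\mu$ lie in leaves of the \emph{same} foliation, say both in $\FF_\OO^s$, then sharing the endpoint $\eta$ with $e_\lambda \neq e_\mu$ would force two distinct $\FF_\OO^s$-leaves (or faces) to have a common ideal endpoint; I would rule this out using the fact (stated in Section \ref{subsec:pA}, from \cite{Fenley2005IdealBO}) that the endpoints of every leaf of $\FF_\OO^{s/u}$ are distinct, together with the structure of the shadow boundary --- distinct sides of a single shadow have disjoint closures in $\overline{\OO}$ by construction, and when the sides belong to shadows of adjacent leaves one relates them via the identification of sides in Lemma \ref{lem:nc-shadow}. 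If instead $e_\lambda \subset \FF_\OO^s$ and $e_\mu \subset \FF_\OO^u$ (or vice versa), then a stable ray and an unstable ray share the ideal point $\eta$; since $\phi$ has no perfect fits, a stable ray and an unstable ray always have distinct endpoints (Section \ref{subsec:pA}), giving the contradiction directly. The remaining care is with faces of singular leaves, where one applies the same endpoint-distinctness statements to the two boundary prongs of the face.

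The step I expect to be the main obstacle is the \emph{same-foliation} case, specifically ruling out that two distinct stable sides $e_\lambda$ and $e_\mu$ coming from the shadows of the \emph{adjacent} leaves $\lambda$ and $\mu$ (rather than from a single shadow) share an ideal endpoint. Here one cannot immediately appeal to a single shadow's boundary structure; instead I would use the dynamics: $g_{e_\lambda}$ and $g_{e_\mu}$ both fix $\eta$, and by Lemma \ref{lem:stabilizer} a nontrivial element of $\pi_1(M)$ has at most one fixed point in $\OO$, so if either $g_{e_\lambda}$ or $g_{e_\mu}$ fixed a point of $\OO$ we would be forced to conclude $g_{e_\lambda} = g_{e_\mu}^{\pm k}$ or derive that the two sides coincide. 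Combined with item (2) of Proposition \ref{prop:local-dynamics}, which pins down the transverse expanding ray emanating from $\eta$, this forces $l_{e_\lambda}$ and $l_{e_\mu}$ to be nested or equal, and then the monotonicity of $Q_\lambda$ (Lemma \ref{lem:nc-continuous-extension}) shows $e_\lambda$ and $e_\mu$ bound a region of $p(\lambda)$ containing no further $\pi_1$-translate, contradicting minimality exactly as in the last paragraph of the proof of Lemma \ref{lem:density-of-edges}. Thus either $e_\lambda = e_\mu$ or their ideal boundaries are disjoint, as claimed.
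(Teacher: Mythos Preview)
Your cross-foliation case is fine: a stable ray and an unstable ray sharing an ideal point is ruled out directly by the no-perfect-fits hypothesis, as you say. The gap is in the same-foliation case. You argue that $g_{e_\lambda}$ and $g_{e_\mu}$ both fix $\eta\in\partial\OO$ and then invoke Lemma \ref{lem:stabilizer}; but that lemma only bounds fixed points in $\OO$, not in $\partial\OO$, so a common boundary fixed point gives no relation between the two elements. The follow-up claim that item (2) of Proposition \ref{prop:local-dynamics} ``pins down the transverse expanding ray emanating from $\eta$'' and hence forces $l_{e_\lambda}$ and $l_{e_\mu}$ to be nested is not right either: the rays $l_{e_\lambda},l_{e_\mu}$ emanate from the interior periodic points $x_{e_\lambda},x_{e_\mu}$, not from the ideal point $\eta$, so no nesting is forced. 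The final appeal to the minimality argument of Lemma \ref{lem:density-of-edges} then has nothing to stand on.

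What you are missing is the one fact that makes the paper's proof a three-line argument with no case split: $g_{e_\mu}$ \emph{stabilizes} $p(\lambda)$. This was established inside the Claim in the proof of Lemma \ref{lem:nc-continuous-extension}: since $\mu$ is adjacent to $\lambda$, the element $g_{e_\mu}\in\pi_1(\Sigma)$ sits inside $\pi_1(\Omega(\lambda))$, and that whole group preserves the shadow $p(\lambda)$. Now $g_{e_\mu}$ permutes the sides of $p(\lambda)$ and fixes the endpoint $\xi$ of $e_\lambda$, so it must send $e_\lambda$ to itself; hence $g_{e_\mu}\in\mathrm{Stab}(e_\lambda)$. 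Then $g_{e_\mu}$ fixes both $x_{e_\mu}$ and $x_{e_\lambda}$ in $\OO$, contradicting Lemma \ref{lem:stabilizer}. Once you have this, the stable/unstable dichotomy and the singular-face bookkeeping become unnecessary.
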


\begin{proof}
Suppose $e_\lambda\neq e_\mu$ but they share an endpoint $\xi$ in $\partial\OO$. Then the element $g_{e_\mu}$ stabilizes $\xi$. By the discussion in the proof of the claim above, $g_{e_\mu}$ also stabilizes $p(\lambda)$. It follows that $g_{e_\mu}$ is in the stabilizer of $e_\lambda$, contradicting Lemma \ref{lem:stabilizer}.
\end{proof}

\begin{figure}[htb]
\labellist
\pinlabel $p(\mu)\subset p(\lambda)$ at 320 320
\pinlabel $p(\lambda)$ at 150 250
\pinlabel $p(\lambda)$ at 500 360
\pinlabel $\partial\OO$ at 600 280
\endlabellist
\includegraphics[scale=.3]{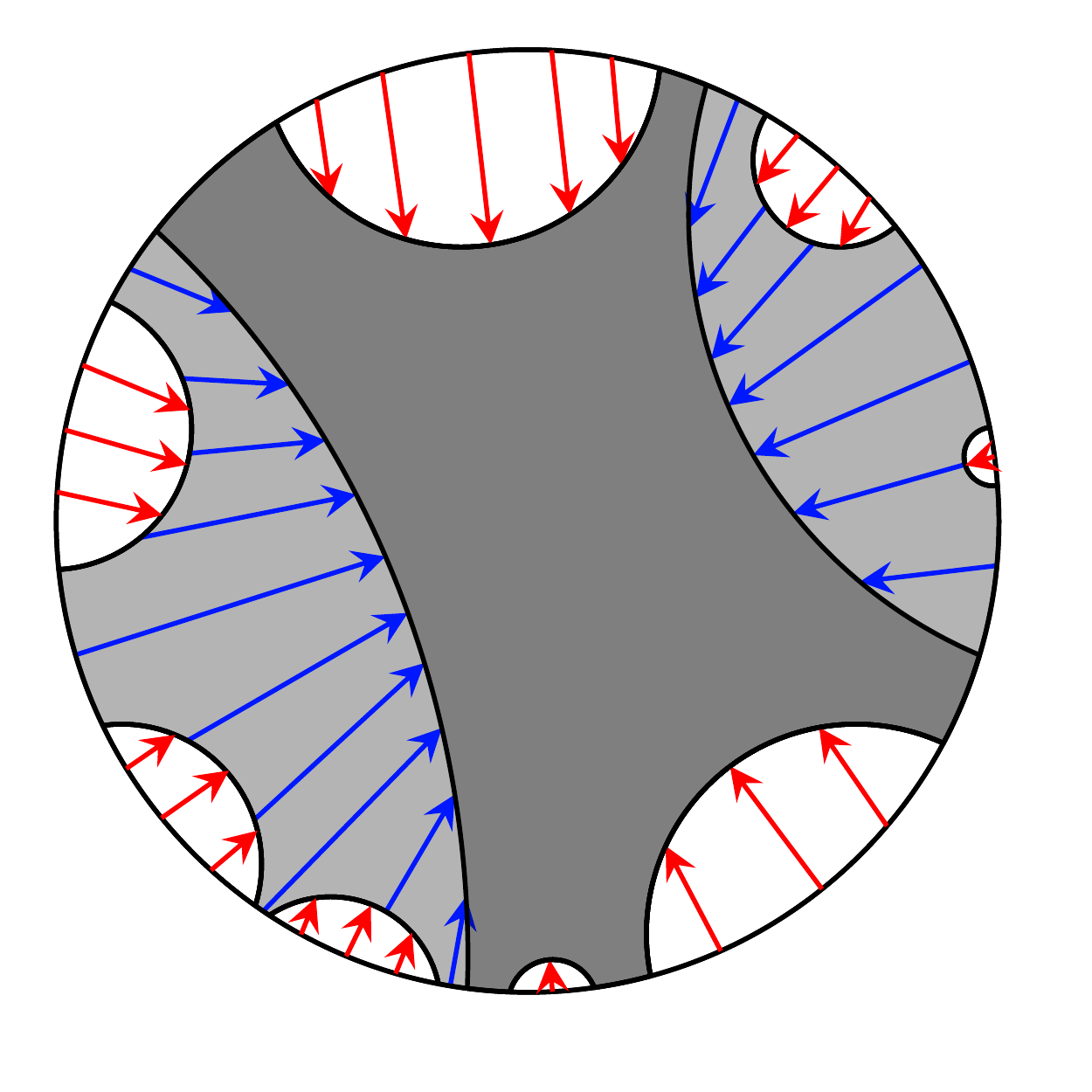}
\caption{The union of shaded area is $p(\lambda)$, and the heavily shaded area is $p(\mu)$, which is a subset of $p(\lambda)$. The red arrows represent the map $Q_\lambda$ and the blue ones represent $Q_{\lambda\mu}$. One should think of each side of $p(\lambda)$ or $p(\mu)$ as a single point at infinity.}
\label{pic:restriction}
\end{figure}

For any leaf $\lambda$ of $\wt{\FF}$, either type-0 or type-1, we define a map $I_\lambda:\partial\OO\to\partial_\infty \lambda$ as follows. For any $\zeta\in\partial\OO$, if $\zeta$ is in $\overline{p(\lambda)}$, set $I_\lambda(\zeta)=Q_\lambda(\zeta)$. If $\zeta$ is contained in an open interval $V_\zeta$ in $\partial\OO\backslash\overline{p(\lambda)}$, then there is a boundary leaf $e$ of $p(\lambda)$ with the same endpoints as $V_\zeta$. In this case, we define $I_\lambda(\zeta)$ to be $Q_\lambda(e)$. It is immediate from the definition that the set of maps $I_\lambda$ are continuous and $\pi_1(M)$-equivariant, i.e. for any $g\in\pi_1(M)$, $\lambda\in\wt{\FF}$ and $x\in\partial\OO$, we have $gI_\lambda(x)=I_{g\lambda}(gx)$, where the action $g: \partial_\infty\lambda\to\partial_\infty(g\lambda)$ is induced by the isometry $g:\lambda\to g\lambda$.

If $\lambda$ is a type-1 leaf and $\mu$ is a type-0 leaf adjacent to $\lambda$, there is a continuous surjection $I_{\lambda\mu}:\partial_\infty\lambda\to\partial_\infty\mu$ such that $I_\mu=I_{\lambda\mu}\circ I_\lambda$. More precisely, for any point $\xi\in\partial_\infty\lambda$, $I_{\lambda\mu}(\xi)$ is defined to be $Q_\mu\circ Q_\lambda^{-1}(\xi)$. The maps $I_\lambda$ and $I_{\lambda\mu}$ have clear geometric meaning as shown in the schematic Figure \ref{pic:restriction}.

\section{Markers and universal circles}\label{sec:universal-circle}

The outline of this section is the following. We first recall the definition of a universal circle (Definition \ref{def:universal-circle}) and prove Theorem \ref{thm:ideal-boundary}. Then we review a construction from \cite{Calegari:2003aa} of a particular universal circle $\Su_\mathrm{left}$, which we call the universal circle from leftmost sections, for any taut foliation. The construction will be then examined carefully for our depth-one foliation $\FF$. The circle $\Su_\mathrm{left}$ arise from a collection of special sections, called the leftmost sections, of a circle bundle $E_\infty$ over $\Lambda$ whose fibers are the circle at infinity of the leaves. The punchlines of this section are Lemma \ref{lem:0-to-1} and Lemma \ref{lem:U-gaps}, where we show that the leftmost sections can be determined by the structure of $\partial\OO$ developed in Section \ref{sec:shadows}.

The axiomatic definition of a universal circle for $\FF$ is due to Calegari-Dunfield \cite{Calegari:2003aa}, and it is stated as follows.

\begin{defn}[Universal circle]\label{def:universal-circle}
A \textbf{universal circle} for $\FF$ is a circle $\Su$ with a $\pi_1(M)$-action and a monotone map $U_\lambda:\Su\to\partial_\infty\lambda$, called a \textbf{structure map} for any leaf $\lambda$ of $\wt{\FF}$ such that:
\begin{itemize}
\item [(1)] for any leaf $\lambda$ and any $\gamma\in\pi_1(M)$, the following diagram commutes:
\[
\xymatrix{
\Su\ar[d]_{\gamma}\ar[r]^{U_\lambda} & \partial_\infty(\lambda)\ar[d]^{\gamma}\\
\Su\ar[r]^{U_{\gamma\lambda}} & \partial_\infty(\gamma\lambda)\\
}
\]
\item [(2)] if $\lambda$ and $\mu$ are incomparable leaves, then the core of $U_{\lambda_1}$ is contained in a single gap of $U_{\lambda_2}$, and vice versa.
\end{itemize}

Two universal circles $\{\Su,U_\lambda\}$ and $\{\Su',U'_\lambda\}$ are isomorphic if there is a $\pi_1(M)$-equivariant homeomorphism $h:\Su\to\Su'$ such that $U'_\lambda\circ h=U_\lambda$ for all $\lambda$.
\end{defn}

\begin{proof}[Proof of Theorem \ref{thm:ideal-boundary}]
All the conditions of a universal circle in Definition \ref{def:universal-circle} are obvious by the way we define $I_\lambda$ except for condition (2). Suppose $\lambda$ and $\mu$ are incomparable, then their shadows are disjoint. Otherwise, there is an orbit of the flow $\wt{\phi}$ intersecting both leaves, contradicting their incomparability. Condition (2) is easily seen to be satisfied.
\end{proof}

In \cite{Calegari:2003aa}, Calegari-Dunfield describe an explicit construction of a universal circle $\Su_{\mathrm{left}}$ for any taut foliation. We briefly review their construction below. For simplicity, we will stick to our $\FF$ instead of more general taut foliations.

The bundle $E_\infty$ is a circle bundle over $\Lambda$ whose fiber at any leaf $\lambda$ is the circle at infinity $\partial_\infty\lambda$. The topology of $E_\infty$ is defined as follows. For any transversal $\tau$ of $\wt{\FF}$, $\tau$ embeds into $\Lambda$ and we identify $\tau$ with its embedding image in $\Lambda$. The unit tangent bundle of $\wt\FF$ restricted to $\tau$ is the circle bundle $UT\wt{\FF}|_\tau$, and there is a natural map $UT\wt{\FF}|_\tau\to E_\infty|_\tau$ sending a tangent vector of a leaf to the ideal point it points towards. We require the map to be a homeomorphism. It is shown in \cite{Calegari:2003aa} that this topology is well-defined, i.e. independent of the choice of $\tau$.

Since $\FF$ is a taut foliation, there is an $\epsilon_1>0$ such that every leaf of $\wt{\FF}$ is quasi-isometrically embedded in its $\epsilon_1$-neighborhood by \cite[Lemma 2.4]{Calegari:2003aa}. By the structure of depth-one foliations there is a constant $\epsilon_2>0$ so that the $\epsilon_2$-neighborhood of $\FF^0$ is contained in a spiraling neighborhood. Fix $\epsilon_0$ to be $\min\{\epsilon_1/3,\epsilon_2\}$.

\begin{defn}
A \textbf{marker} for $\wt{\FF}$ is an embedding 
\[
m:[0,1]\times\R_{\geq0}\to\wt{M}
\]
such that
\begin{itemize}
\item for any $x\in[0,1]$, $m(\{x\}\times\R_{\geq0})$ is a geodesic ray in a leaf of $\wt{\FF}$;
\item for any $y\in\R_+$, $m([0,1]\times\{y\})$ is a transversal with length bounded by $\epsilon_0$.
\end{itemize}
Any marker $m$ gives a section $s$ of $E_\infty|_\tau$, where $\tau$ is the image of $m([0,1]\times\{y\})$ in $\Lambda$, such that for any leaf $\lambda\in\tau$, $s(\lambda)$ is the ideal endpoint of $\mathrm{Image}(m)\cap\lambda$. The image of $\tau$ under $s$ is called the \textbf{end} of $m$.
\end{defn}

Note that our choice of $\epsilon_0$ is different from but no larger than the constant chosen in \cite{Calegari:2003aa}. Shrinking the constant will not affect the constructions and the main results in their paper.

\begin{lem}[\cite{Calegari:2003aa}]\label{lem:marker-disjointness}
Given two marker ends, either they are disjoint or their union is an embedded closed interval in $E_\infty$ transverse to fibers.
\end{lem}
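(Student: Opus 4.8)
The statement to prove is Lemma~\ref{lem:marker-disjointness}: given two marker ends, either they are disjoint or their union is an embedded closed interval in $E_\infty$ transverse to fibers.

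\textbf{Plan.} The plan is to work directly with the defining geometry of markers, exploiting the quasi-isometric embedding of leaves into their $\epsilon_1$-neighborhoods together with the standard fellow-traveling behavior of geodesic rays in $\hyp^2$. Let $m_1,m_2:[0,1]\times\R_{\geq0}\to\wt M$ be two markers, with ends $s_1$ over a transversal $\tau_1\subset\Lambda$ and $s_2$ over $\tau_2\subset\Lambda$. The key observation is that a marker packs a one-parameter family of geodesic rays into a tube of transverse diameter $\leq\epsilon_0$; since $\epsilon_0\leq\epsilon_1/3$, for any two leaves $\lambda,\mu$ met by the same marker the two geodesic rays $m(\{x\}\times\R_{\geq0})\cap\lambda$ and $m(\{x'\}\times\R_{\geq0})\cap\mu$ are $2\epsilon_0$-close in $\wt M$, hence (using the quasi-isometric embedding into the $\epsilon_1$-neighborhood and that $3\epsilon_0\le\epsilon_1$) lie a bounded Hausdorff distance apart inside a single leaf after projecting, so they must converge to the same point of $\partial_\infty$; this is precisely why the ``end'' of a marker is a well-defined section. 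The first step is therefore to isolate this fellow-traveling lemma and record that any two rays belonging to the \emph{same} marker, or to markers whose tubes come within $\epsilon_0$ of each other, determine the same ideal endpoint on any common leaf.

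\textbf{Key steps.} First I would show that if $\tau_1\cap\tau_2=\varnothing$ in $\Lambda$ then the ends are automatically disjoint, since they project to disjoint base sets; so assume $\tau_1\cap\tau_2\neq\varnothing$. Second, suppose $s_1(\lambda)=s_2(\lambda)$ for some leaf $\lambda\in\tau_1\cap\tau_2$; I claim this forces $s_1(\mu)=s_2(\mu)$ for \emph{every} leaf $\mu\in\tau_1\cap\tau_2$. The mechanism: the two geodesic rays $r_1=m_1(\cdot)\cap\lambda$ and $r_2=m_2(\cdot)\cap\lambda$ share an ideal endpoint in $\lambda\cong\hyp^2$, so they are eventually within bounded distance; pushing along the leaves toward a nearby $\mu$ using the product structure on the $\epsilon_0$-tubes of the two markers, and using the uniform quasi-isometric embedding constant, shows the corresponding rays in $\mu$ also fellow-travel, hence share an endpoint. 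Thus the locus $\{\mu\in\tau_1\cap\tau_2:s_1(\mu)=s_2(\mu)\}$ is open and closed in $\tau_1\cap\tau_2$; combined with connectedness of the relevant overlap (the intersection of two transversals in the simply connected non-Hausdorff 1-manifold $\Lambda$ is an interval on each), the two ends agree over the entire common base. Third, if the ends do not agree at any common leaf, then as subsets of $E_\infty$ they are disjoint, because over each $\mu$ they sit at distinct fiber points and over non-common base they have disjoint projections. Finally, in the case where they agree over some leaf, I would verify that the union $s_1(\tau_1)\cup s_2(\tau_2)$ is an embedded closed interval transverse to fibers: each $s_i(\tau_i)$ is such an interval by construction of the topology on $E_\infty$ (it is the image of a transversal under a section coming from a continuous family of tangent directions), and since they agree precisely on the sub-interval over $\tau_1\cap\tau_2$, the union is obtained by gluing two intervals along a common sub-interval at one end, which is again an embedded interval transverse to fibers; transversality is inherited because on the overlap the section is literally the same and off the overlap each piece is transverse.

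\textbf{Main obstacle.} The technical heart—and the step I expect to be most delicate—is the fellow-traveling argument propagating the coincidence of endpoints from one leaf to a neighboring leaf: one must carefully use that the marker's transverse slices have length $\leq\epsilon_0$, that $3\epsilon_0\leq\epsilon_1$ so the quasi-isometric-embedding estimate applies, and that the geodesic rays in two different leaves can be compared by a uniformly bounded homotopy through the $\epsilon_0$-tube; keeping the quasi-isometry constants uniform as $\mu$ varies, and handling the non-Hausdorff points of $\Lambda$ (where $\tau_1$ and $\tau_2$ might diverge), requires care. Everything else is a matter of organizing openness/closedness and gluing, which is routine once the key estimate is in hand. (In fact, since this lemma is attributed to Calegari--Dunfield, I would present the argument at this level of detail and refer to \cite{Calegari:2003aa} for the verification of the uniform estimates.)
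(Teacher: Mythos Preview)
The paper does not supply its own proof of this lemma: it is stated with attribution to Calegari--Dunfield \cite{Calegari:2003aa} and used as a black box. So there is no in-paper argument to compare your proposal against.

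That said, your outline is the standard one and is essentially how the result is obtained in \cite{Calegari:2003aa}. A couple of minor remarks that would tighten it. First, the reason the overlap $\tau_1\cap\tau_2$ is a single (possibly degenerate) closed interval, and the reason the union $\tau_1\cup\tau_2$ is again an interval when the ends coincide on the overlap, is that $\Lambda$ is locally homeomorphic to $\R$ at every point (the non-Hausdorffness is a global phenomenon of non-separated limits, not local branching); hence two embedded closed intervals cannot diverge at an interior point of their intersection. Second, in the openness step of your clopen argument you only need the quasi-isometric embedding of $\lambda$ into its $\epsilon_1$-neighborhood once: the ray $r_1'\subset\mu$ lies within $\epsilon_0$ of $r_1\subset\lambda$ and $r_2'\subset\mu$ lies within $\epsilon_0$ of $r_2\subset\lambda$; since $r_1,r_2$ are asymptotic in $\lambda$ and $\mu$ sits in the $\epsilon_0$-neighborhood of $\lambda$ (hence inside the $\epsilon_1$-neighborhood), the inclusion of $\mu$ into that neighborhood is a quasi-isometry onto its image, which forces $r_1',r_2'$ to be asymptotic in $\mu$. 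Your identification of this as the delicate step is accurate, and deferring the uniform constants to \cite{Calegari:2003aa} is exactly what the paper does.
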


A point $\xi\in\partial\lambda$ is called a \textbf{marker endpoint} if there is a marker $m$ so that the end of $m$ intersect $\partial_\infty\lambda$ at $\xi$. The following theorem is originally announced by Thurston in an unfinished manuscript \cite{ThuCircle2}, and the proof is carefully written down in \cite{Calegari:2003aa}. Heuristically, it says that the leaves of $\wt{\FF}$ stay close in many directions.

\begin{thm}[Thurston's Leaf Pocket Theorem]\label{thm:leaf-pocket}
For any leaf $\lambda$ of $\wt{\FF}$, the set of marker endpoints in $\partial_\infty\lambda$ is dense.
\end{thm}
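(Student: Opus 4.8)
The statement to prove is: for any leaf $\lambda$ of $\wt{\FF}$, the set of marker endpoints in $\partial_\infty\lambda$ is dense. Let me think about how to prove this.

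This is stated as "Thurston's Leaf Pocket Theorem" and attributed to Thurston and Calegari-Dunfield. The key idea: given a leaf $\lambda$ and a point $\xi \in \partial_\infty \lambda$, we want to find a marker whose end hits $\partial_\infty\lambda$ near $\xi$. A marker is built from a geodesic ray in $\lambda$ (approaching $\xi$) together with nearby rays in nearby leaves staying within $\epsilon_0$ (uniformly) of each other.

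The approach: Take a geodesic ray $r$ in $\lambda$ limiting to $\xi$. We want to "thicken" this ray transversally — find, in leaves $\lambda'$ transverse-close to $\lambda$, geodesic rays that stay uniformly close to $r$. The obstruction is that in general two nearby leaves can diverge. The trick (Thurston's) is a compactness / pigeonhole argument: parametrize the ray $r$ by arclength, look at the transversal disk of radius $\epsilon_0$ through $r(n)$ for integers $n$, and consider how the "foliated neighborhood picture" looks. Since $M$ is compact, there are only finitely many "local models" up to the relevant approximation, so by pigeonhole infinitely many of the $r(n)$ have the same local picture. Between two such times $n_1 < n_2$, one gets a map of the transversal that is, roughly, an expansion or at least non-contraction in the direction of $r$; iterating (using that the ray is infinite) produces a genuine marker.

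More carefully, the plan is:
\begin{itemize}
\item[(i)] Fix $\lambda$, $\xi \in \partial_\infty\lambda$, and a unit-speed geodesic ray $r:[0,\infty)\to\lambda$ with $r(\infty)=\xi$. Using the quasi-isometric embedding of leaves in their $\epsilon_1$-neighborhoods, set up for each $t$ a short transversal $\tau_t$ through $r(t)$ of length $\le \epsilon_0$, and for each nearby leaf $\lambda'$ meeting $\tau_t$, the holonomy along $r$ gives a partially-defined path in $\lambda'$.
\item[(ii)] Invoke compactness of $M$ to conclude there are only finitely many local configurations (up to bounded error) of the foliation near a point together with a tangent direction; by pigeonhole, pick times $n_1<n_2$ along $r$ where the configuration repeats, and a deck-type return (or at least an approximate return) identifying the picture at $r(n_1)$ with that at $r(n_2)$.
\item[(iii)] Show the holonomy along $r|_{[n_1,n_2]}$ does not uniformly contract the transversal in a neighborhood of the endpoint direction $\xi$ — i.e. some sub-transversal of $\tau_{n_1}$ maps into $\tau_{n_2}$ with the geodesic rays in the corresponding leaves staying within $\epsilon_0$. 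Iterating this return map along the whole ray, and using the leaf-pocket-type estimate from \cite[Lemma 2.4]{Calegari:2003aa} that geodesics in $\wt{\FF}$-leaves track each other once they start within $\epsilon_0 \le \epsilon_1/3$, produces a full marker $m:[0,1]\times\R_{\ge0}\to\wt{M}$ with $m(\{0\}\times\R_{\ge0})$ the tail of $r$, hence a marker endpoint equal to $\xi$ itself (or a point arbitrarily close to it, which suffices for density).
\item[(iv)] Since $\xi$ was arbitrary, marker endpoints are dense in $\partial_\infty\lambda$; $\pi_1(M)$-equivariance is automatic since deck transformations carry markers to markers.
\end{itemize}

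The main obstacle is step (iii): controlling the holonomy / ensuring that the transversally-neighboring geodesic rays do not peel away from $r$. Generic foliated holonomy can expand or contract, and a priori nearby rays in nearby leaves need not stay close. The resolution is precisely the non-contraction argument powered by compactness (finitely many local pictures) plus the quasi-isometric embedding of leaves in their $\epsilon_0$-neighborhoods: one must argue that an infinite geodesic ray cannot have holonomy that is "eventually contracting in all directions" along it without forcing a compact-leaf or Reeb-type obstruction, which is excluded for a taut foliation. Carrying out this argument quantitatively — choosing the right $\epsilon_0$ (hence the role of $\epsilon_0 = \min\{\epsilon_1/3, \epsilon_2\}$) so that the iterated return map stays within the quasi-isometry constants — is the technical heart, and for the purposes of this paper we cite \cite{Calegari:2003aa, ThuCircle2} for the full details, indicating only the structure above.
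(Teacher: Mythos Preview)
The paper does not give its own proof of Theorem \ref{thm:leaf-pocket}: it is stated as a known result due to Thurston, with the careful write-up attributed to \cite{Calegari:2003aa}, and no argument is supplied beyond the heuristic remark that leaves of $\wt{\FF}$ stay close in many directions. Your proposal is consistent with this treatment---you sketch the compactness/pigeonhole mechanism behind the Calegari--Dunfield argument and then likewise defer to \cite{Calegari:2003aa, ThuCircle2} for the details---so there is nothing to compare beyond noting that both you and the paper ultimately cite the same source rather than reproving the result.
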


For any leaf $\lambda$ of $\wt{\FF}$ and any point $\xi\in\partial_\infty\lambda$, there is a special section $s_\xi$ of $E_\infty$, called the \textbf{leftmost section starting from} $\xi$, built as follows.

In $\Lambda$, there is a neighborhood $\tau$ of $\lambda$ homeomorphic to a closed interval, and $E_\infty|\tau$ is a cylinder. We adopt the convention that the flow $\wt{\phi}$ is flowing upwards, and we are facing the cylinder $E_\infty|\tau$ from the outside. Take a finite collection $C$ of marker ends in $E_\infty|\tau$ so that each fiber intersects at least one. This is possible by Theorem \ref{thm:leaf-pocket}. We build a path $\alpha_C$ by starting from $\xi$, heading left horizontally in a fiber until we hit the first marker end in $C$ and following the marker end to move upwards. After we reach the top of the marker end, we turn left again staying in a fiber until we hit the next marker end in $C$, and follow the same rules to move on until we reach the top of $E_\infty|_\tau$. We call this the leftmost-up rule, following \cite{Calegari:2003aa}. We can also move downwards from $\xi$, but in the rightmost-down way. This procedure gives us a staircase path $\alpha_C$ in $E_\infty|_\tau$, which is an approximation to $s_\xi$ (Figure \ref{fig:staircase}).

\begin{figure}[h!]
\labellist
\pinlabel $\xi$ at 150 25
\endlabellist
\includegraphics[scale=.8]{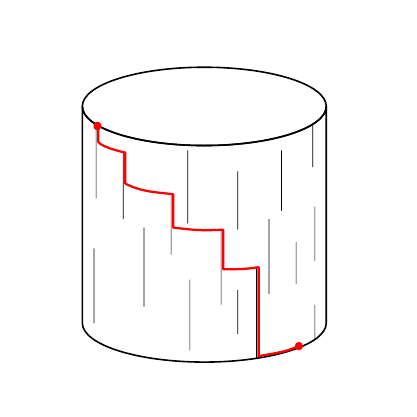}
\caption{Approximating leftmost section on $E_\infty|_\tau$ by starting from $\xi$ and go leftmost-up.}\label{fig:staircase}
\end{figure}

To go from the staircase approximations to the leftmost section $s_\xi$, we define $s_\xi|_\tau$ to be the (rightmost) supremum above $\xi$ and the (leftmost) infimum below $\xi$ among all possible $\alpha_C$. To be precise, we view $E_\infty|_\tau$ as $\tau\times (\R/\Z)$. For a leaf $\mu\in\tau$ above $\lambda$, we define 
\[
s_\xi(\mu)=\sup_C(\min (\alpha_C\cap\partial_\infty\mu)).
\]
For $\mu\in\tau$ below $\lambda$, define 
\[
s_\xi(\mu)=\inf_C(\max (\alpha_C\cap\partial_\infty\mu)).
\]
It was proved in \cite{Calegari:2003aa} that the supremum and the infimum exist, and $s_\xi$ is indeed a continuous section of $E_\infty$ over $\tau$. We can define $s_\xi$ for all leaves comparable to $\lambda$ following this procedure.

Finally, we can branch out in $\Lambda$ by turning around to reach incomparable leaves where $s_\xi$ is not yet defined. More precisely, suppose $\mu$ is a leaf incomparable to $\lambda$. There is a sequence of leaves
\[
\lambda_0=\lambda,\lambda_1,\cdots,\lambda_n=\mu
\]
so that $\lambda_{2i}$ and $\lambda_{2i+1}$ are comparable, and $\lambda_{2i+1}$ and $\lambda_{2i+2}$ are non-separated. To illustrate the idea, we assume $\lambda_1$ is above $\lambda$ (see Figure \ref{fig:path} for the case when $n=9$). In this case, there is a product region $\wt{\Omega}_1$ so that $\lambda_{1},\lambda_2\in\partial^-\wt{\Omega}_1$. Let $l_0$ be the segment $[\lambda,\lambda_1]$ in $\Lambda$, and let $l_1$ be the image of $\wt{\Omega}_1$ in $\Lambda$. By the above construction, $s_\xi$ is already defined over $l_0$, $\lambda_1$ and $l_1$. It is a consequence of Lemma \ref{lem:marker-disjointness} and Theorem \ref{thm:leaf-pocket} that $s_\xi|_{l_1}$ has a well-defined endpoint at $\partial_\infty\lambda_2$ (see Lemma 6.18 of \cite{Calegari:2003aa}). We extend $s_\xi$ to $\lambda_2$ continuously, and follow the rightmost-down rule to define it over the segment $l_2:=[\lambda_2,\lambda_3]$. We continue along the sequence $\lambda_i$ until we have defined $s_\xi(\mu)$. Since the dual graph $\Lambda^*$ is a tree, there is a unique way to reach any incomparable $\mu$ from $\lambda$ through such a sequence of $\lambda_n$. In the end we have a section $s_\xi$ that is well-defined on the whole $\Lambda$. 

The process of extending $s_\xi$ is a process of branching out from $\lambda$ and sweeping $\Lambda$. The values at leaves that are closer to $\lambda$ are defined first, and the values at leaves farther away from $\lambda$ are determined by the closer values. At each point of $\Lambda$, there is a direction of extension of $s_\xi$ that points towards the direction away from $\lambda$, along which $s_\xi$ is defined.

\begin{figure}[h!]
\labellist
\pinlabel $\lambda$ at 20 30
\pinlabel $\lambda_1$ at 5 135
\pinlabel $l_1$ at 19 178
\pinlabel $l_0$ at 5 85
\pinlabel $\lambda_2$ at 50 135
\pinlabel $\l_2$ at 50 105
\pinlabel $\lambda_3$ at 65 80
\pinlabel $\mu$ at 220 190
\endlabellist
\includegraphics[scale=.8]{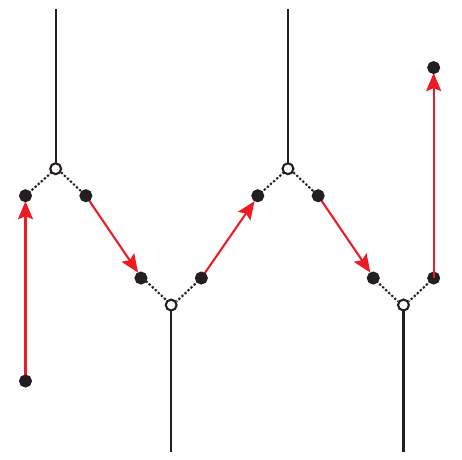}
\caption{}\label{fig:path}
\end{figure}

There is a unique leftmost section starting from any point in $E_\infty$. The set of leftmost sections is denoted by $LS$. The image of two different leftmost sections might coalesce but can never cross each other. If $\ell$ is a line in $\Lambda$, the bundle $E_\infty|_\ell$ is homeomorphic to a cylinder, and the leftmost sections restricted to $\ell$ give embedded lines on $E_\infty|_\ell$ transverse to the fiber. For any three different leftmost sections, there is an embedded line in $\Lambda$ so that the restrictions of the sections to this line have a well-defined cyclic order, and the cyclic order is independent of the choice of the line \cite[Lemma 6.25]{Calegari:2003aa}. The completion of $LS$ with respect to the cyclic order is homeomorphic to a circle, denoted by $\Su_\mathrm{left}$. The fundamental group $\pi_1(M)$ acts naturally on $LS$, and the action extends to an action on $\Su_\mathrm{left}$. For any $\lambda\in\Lambda$, there is a valuation map $U_\lambda:LS\to\partial_\infty\lambda$ given by
\[
U_\lambda(s)=s(\lambda).
\]
The map $U_\lambda$ can be extended to a monotone map $U_\lambda:\Su_\mathrm{left}\to\partial_\infty\lambda$.

\begin{thm}[\cite{Calegari:2003aa}]\label{thm:leftmost-universal-circle}
The circle $\Su_\mathrm{left}$ together with the $\pi_1(M)$-action and the set of structure maps $\{U_\lambda\}_{\lambda\in\Lambda}$ is a universal circle for $\FF$.
\end{thm}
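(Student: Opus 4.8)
The statement to prove is Theorem \ref{thm:leftmost-universal-circle} (due to Calegari–Dunfield), asserting that $(\Su_\mathrm{left}, \pi_1(M), \{U_\lambda\})$ is a universal circle for $\FF$ in the sense of Definition \ref{def:universal-circle}. The plan is to verify each of the three requirements in the definition in turn: that the $\pi_1(M)$-action is by homeomorphisms of $\Su_\mathrm{left}$, that each $U_\lambda$ is monotone, that the equivariance diagram (1) commutes, and that the gap-nesting condition (2) holds for incomparable leaves. The work has already been largely set up: $\Su_\mathrm{left}$ is constructed as the order completion of the set $LS$ of leftmost sections, the cyclic order on triples of leftmost sections is well-defined and independent of the separating line (\cite[Lemma 6.25]{Calegari:2003aa}), and the maps $U_\lambda(s) = s(\lambda)$ are given. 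So the task is mostly bookkeeping, invoking the structural lemmas of \cite{Calegari:2003aa}.

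First I would establish the $\pi_1(M)$-action. An element $\gamma \in \pi_1(M)$ acts on $\wt M$, hence on $\wt\FF$, hence carries geodesic rays to geodesic rays and transversals of length $\le \epsilon_0$ to transversals of length $\le\epsilon_0$ (the metric is $\pi_1(M)$-invariant), so it permutes markers, marker ends, and therefore leftmost sections: the leftmost-up/rightmost-down rule is phrased purely in terms of the cylinder $E_\infty|_\tau$, the collection of marker ends, and the transverse/fiber structure, all of which are $\gamma$-equivariant. Hence $\gamma$ induces a bijection of $LS$ preserving the cyclic order (since it preserves $E_\infty$, its fibers, and the transverse orientation), so it extends uniquely to an orientation-preserving homeomorphism of the completion $\Su_\mathrm{left}$. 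The identity $\gamma \cdot s_\xi = s_{\gamma\xi}$ on leftmost sections starting from $\xi$ then immediately gives the commuting square in (1), because $U_{\gamma\lambda}(\gamma s) = (\gamma s)(\gamma\lambda) = \gamma(s(\lambda)) = \gamma(U_\lambda(s))$, and this passes to the completion by continuity/monotonicity.

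Next, monotonicity of $U_\lambda$. On $LS$ itself, the key point is that two distinct leftmost sections restricted to the line through $\lambda$ (or, more generally, restricted to a line containing $\lambda$) give embedded non-crossing arcs in the cylinder $E_\infty|_\ell$ transverse to the fiber $\partial_\infty\lambda$; so their $\lambda$-values are cyclically ordered compatibly with the cyclic order on $LS$. This says the map $LS \to \partial_\infty\lambda$ is (weakly) order-preserving, hence extends to a monotone map on the completion $\Su_\mathrm{left}$. The fact that distinct leftmost sections can coalesce at $\lambda$ but never cross is exactly what produces gaps rather than a failure of monotonicity. I would cite the relevant lemmas of \cite{Calegari:2003aa} (around Lemma 6.25) for the existence of the monotone extension and for surjectivity onto $\partial_\infty\lambda$ (surjectivity because every $\xi\in\partial_\infty\lambda$ is the starting point of some leftmost section $s_\xi$, so $\xi = U_\lambda(s_\xi)$).

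The main obstacle is condition (2): if $\lambda$ and $\mu$ are incomparable, then $\mathrm{core}(U_\lambda)$ lies in a single gap of $U_\mu$ and vice versa. Here is the mechanism I would spell out. Since $\lambda,\mu$ are incomparable, there is in $\Lambda$ a ``turning point'': a leaf $\nu$ comparable to both, together with a product region $\wt\Omega$ such that $\lambda$ and $\mu$ approach $\wt\Omega$ from the two sides of the branching, and the geodesic-like paths in $\Lambda$ from $\lambda$ to $\nu$ and from $\mu$ to $\nu$ share a common segment through $\wt\Omega$ before diverging. In the construction of leftmost sections, the value $s_\xi(\mu)$ for $\xi\in\partial_\infty\lambda$ is obtained by first extending $s_\xi$ up to the branching product region $\wt\Omega$ and then reading off its endpoint on $\partial_\infty\mu$; but once the section has been funneled through the non-separated transition into the component determined by $\mu$, its endpoint at $\partial_\infty\mu$ depends only on how $s_\xi$ arrives at the boundary leaf of $\wt\Omega$ on $\mu$'s side, and — crucially — the whole family of leftmost sections whose starting fibers lie over $\lambda$ arrives at that boundary leaf through a single ``channel'' (the marker ends restricted to the product region, by Lemma \ref{lem:marker-disjointness} and Theorem \ref{thm:leaf-pocket}, as in \cite[Lemma 6.18]{Calegari:2003aa}). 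Hence all of $\mathrm{core}(U_\lambda)$ — which consists precisely of those leftmost sections that are genuinely ``pinned'' at $\lambda$ — gets collapsed by $U_\mu$ to the single point $s_\xi|_{\wt\Omega}$'s endpoint, i.e.\ lies in one gap of $U_\mu$; and symmetrically. I would present this as: let $\nu'$ be the boundary leaf of the branching product region on $\mu$'s side, factor $U_\mu$ (on the relevant sub-arc) through the endpoint map $s \mapsto (s|_{\wt\Omega})(\nu')$, observe that this endpoint map is constant on the core of $U_\lambda$ because all such sections run through the same marker channel in $\wt\Omega$, and conclude. The delicate part is justifying that ``pinned at $\lambda$'' sections really do all pass through one channel in the branching region, which is where one leans hardest on the Leaf Pocket Theorem (density of marker endpoints) and Lemma \ref{lem:marker-disjointness} (marker ends don't cross); I would isolate this as the one genuinely new estimate in the verification and treat the rest as routine.
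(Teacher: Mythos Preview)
The paper does not give its own proof of this theorem; it is stated as a citation to Calegari--Dunfield \cite{Calegari:2003aa} and used as a black box. So there is no ``paper's own proof'' to compare your proposal against.

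As for the content of your sketch: the verification of the $\pi_1(M)$-action, the equivariance diagram, and monotonicity of $U_\lambda$ are routine and your outline is fine. For condition~(2), however, your argument is incomplete in exactly the place you flag. You want to show that $\mathrm{core}(U_\lambda)$ lies in a single gap of $U_\mu$, and you propose to funnel all leftmost sections ``pinned at $\lambda$'' through a single channel at the branching region. But you never actually argue this; you write that it is ``the one genuinely new estimate'' and leave it. Two specific issues: first, $\mathrm{core}(U_\lambda)$ is not the set of leftmost sections \emph{starting} at points of $\partial_\infty\lambda$ (your phrasing ``starting fibers lie over $\lambda$'' suggests this conflation), it is the complement of the open gaps of $U_\lambda$ in $\Su_\mathrm{left}$, and elements of the core need not arise as $s_\xi$ for $\xi\in\partial_\infty\lambda$. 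Second, the claim that all such sections ``arrive through the same marker channel'' in the product region is not something Lemma~\ref{lem:marker-disjointness} and Theorem~\ref{thm:leaf-pocket} give you directly; the actual argument in \cite{Calegari:2003aa} analyzes how gaps of $U_\nu$ nest as $\nu$ varies along the path in $\Lambda$ between $\lambda$ and $\mu$, and uses the non-separated structure at the branch point in an essential way. Your outline gestures at the right geometry but does not supply the mechanism; if you want a self-contained proof you should consult \cite{Calegari:2003aa} directly for the missing step.
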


In order to prove Theorem \ref{thm:main}, we need to analyze more carefully what marker ends and the leftmost sections look like on $E_\infty$. We will continue to use the terminologies from Section \ref{sec:shadows}.

We first consider the ends of markers that are contained in a product region. The identification of $\widetilde{\Omega}$ with $\widetilde{L}\times\R$ gives a canonical trivialization of $E_\infty\vert_{\wt{\Omega}}$ as $\partial_\infty\widetilde{L}\times\R$. Here we implicitly use that any homeomorphism between two infinite type surfaces with standard hyperbolic structures extends continuously to a homeomorphism between their boundaries at infinity \cite{CCHomot}. Denote the leaf $\widetilde{L}\times\{t\}$ by $\lambda_t$. Again, each $\lambda_t$ is identified with $\wt{L}$.

\begin{lem}\label{lem:product-markers}
    For any $\xi\in\partial_\infty\lambda_t$, there is an $\epsilon>0$ so that $\{\xi\}\times[t,t+\epsilon]$ is the end of a marker.
\end{lem}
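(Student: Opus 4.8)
The plan is to produce, for a given $\xi\in\partial_\infty\lambda_t$, an explicit marker whose end is $\{\xi\}\times[t,t+\epsilon]$ for some small $\epsilon>0$. The key mechanism is the product structure $\wt\Omega\cong\wt L\times\R$ of the product region, together with the fact that the first-return map of $\wt\phi$ across $\wt\Omega$ is bilipschitz on leaves with a uniform constant (a consequence of the transversality of $\FF$ to $\phi$ and the compactness of $M$, noted in Section \ref{subsec:depth-1}).

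First I would choose a geodesic ray $r_\xi$ in $\lambda_t\cong\wt L$ converging to $\xi$, parametrized by arclength. The product trivialization gives a family of rays $r_\xi\times\{s\}$ in $\lambda_s$ for $s\in[t,t+\epsilon]$; these need not be geodesics in $\lambda_s$, but since the leaves of $\wt\FF$ are uniformly bilipschitzly identified within a product region, $r_\xi\times\{s\}$ is a uniform quasigeodesic ray in $\lambda_s$, hence has a well-defined endpoint $\xi_s\in\partial_\infty\lambda_s$; by continuity of the trivialization of $E_\infty|_{\wt\Omega}$ as $\partial_\infty\wt L\times\R$, this endpoint is exactly $(\xi,s)$. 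Replacing $r_\xi\times\{s\}$ by the genuine geodesic ray $\rho_s$ in $\lambda_s$ with the same endpoint $(\xi,s)$, the Morse lemma (uniform fellow-travelling of quasigeodesics, valid in the uniformly hyperbolic leaves by Candel's metric) gives that $\rho_s$ stays within bounded Hausdorff distance of $r_\xi\times\{s\}$, and in fact the family $\{\rho_s\}_{s\in[t,t+\epsilon]}$ sweeps out a set that fellow-travels the product region $r_\xi\times[t,t+\epsilon]$. The candidate marker is then $m(x,y)=\rho_{\psi(y)}(y)$ after reparametrizing so that each $m([0,1]\times\{y\})$ is the transversal obtained by following $\wt\phi$ from a point of $\rho_t$ across to $\rho_{t+\epsilon}$.

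The remaining point, and the main obstacle, is the requirement that the transversals $m([0,1]\times\{y\})$ have length bounded by $\epsilon_0$. The transversal at ``height'' $y$ along $r_\xi$ is essentially a segment of a $\wt\phi$-flowline crossing the product region $r_\xi\times[t,t+\epsilon]$; its length is controlled by how ``thin'' this region is in $\wt M$, which in turn is controlled by $\epsilon$. Since the $\wt\phi$-time to cross $\wt\Omega$ from $\lambda_t$ to $\lambda_{t+\epsilon}$ is bounded (no orbit lingers, by transversality and compactness of the relevant compact part of $M$ swept out), and since the crossing time and the path-length of a flow segment are comparably bounded, for $\epsilon$ small enough the crossing flowlines have length less than $\epsilon_1/3\le\epsilon_0$. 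One then passes to the geodesic representatives $\rho_s$, noting that the Hausdorff distance between $\rho_s$ and $r_\xi\times\{s\}$ is bounded by a constant depending only on the uniform quasigeodesic constants, not on $\epsilon$; so after possibly shrinking $\epsilon$ once more to absorb this additive error into $\epsilon_0$ and to stay inside the spiraling neighborhood where the product picture is valid (here the choice $\epsilon_0\le\epsilon_2$ is used), the family $\{\rho_s\}$ together with its sweeping flow-segments defines an honest marker with end $\{\xi\}\times[t,t+\epsilon]$. I expect the careful bookkeeping of ``shrink $\epsilon$ to control the transversal length, then again to absorb the Morse constant'' to be the only genuinely delicate part; everything else is a direct unwinding of the product trivialization of $E_\infty$ over $\wt\Omega$.
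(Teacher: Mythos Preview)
Your approach is essentially the same as the paper's: start with a geodesic ray $r_\xi\subset\lambda_t$, flow it via $\wt\phi$ to nearby leaves to get quasi-geodesics landing at $(\xi,s)$, straighten these to geodesics $\rho_s$, and check the width bound. The paper also controls the flow-segment length by taking $\epsilon$ small, using that depth-one leaves in the same fibered region have asymptotic ends.

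There is, however, a genuine gap in your final step. You write that the Hausdorff distance between $\rho_s$ and $r_\xi\times\{s\}$ is bounded by a Morse constant depending only on the \emph{uniform} quasigeodesic constants, ``not on $\epsilon$'', and then propose to absorb this fixed additive error by shrinking $\epsilon$. That does not work: if the Morse constant $C$ is independent of $\epsilon$, the transversal length is at least of order $C$ no matter how small $\epsilon$ is, and there is no reason $C\le\epsilon_0$. What actually makes the argument go through---and what the paper uses---is that the quasi-geodesic constants of $r_\xi\times\{s\}$ in $\lambda_s$ converge to $(1,0)$ as $s\to t$ (the flow for short time is nearly an isometry between leaves; this is where the choice $\epsilon_0\le\epsilon_1/3$ and \cite[Lemma~2.4]{Calegari:2003aa} enter). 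Hence the Morse constant itself tends to $0$ with $\epsilon$, and \emph{that} is why shrinking $\epsilon$ once more yields transversals of length at most $\epsilon_0$. Once you replace ``uniform quasigeodesic constants'' by ``quasi-geodesic constants tending to $(1,0)$'', your outline becomes correct and matches the paper's proof.
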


\begin{proof}
Take any point $x\in\lambda_t$ and consider the geodesic ray $\gamma$ from $x$ to $\xi$. Since depth-one leaves in the same fibered region have asymptotic ends, there is an $\epsilon$ such that any flow line of $\wt{\phi}$ between $\lambda_{t}$ and $\lambda_{t+\epsilon}$ has length $<\epsilon_0$. By our choice of $\epsilon_0$, the flow image of $\gamma$ in $\lambda_{s}$ for $s\in(t,t+\epsilon]$ is a family of quasi-geodesics with endpoint $(\xi,s)$, and the quasi-geodesic constants converge to $(1,0)$ as $s$ approaches $t$. We can then pull tight every quasi-geodesic to be a geodesic and take $\epsilon$ even smaller to obtain a genuine marker with the end $\{\xi\}\times[t,t+\epsilon]$. The continuity of the tightened marker is ensured by the continuity of the hyperbolic metric, and the finite-width of the marker is guaranteed by the nice variation of quasi-geodesic constants.
\end{proof}

The next corollary follows immediately from the construction of leftmost sections on comparable leaves.

\begin{cor}[Leftmost sections on a product region are vertical]\label{cor:vertical-section}
    Suppose $s$ is any leftmost section of $E_\infty$. If $s(\lambda_{t_0})=(\xi,t_0)$ for some $t_0\in\R$, then $s(\lambda_{t})=(\xi,t)$ for all $t$.
\end{cor}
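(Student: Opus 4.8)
\textbf{Proof proposal for Corollary~\ref{cor:vertical-section}.} The plan is to reduce the statement to the construction of leftmost sections on comparable leaves, which only requires understanding markers inside a single product region $\wt\Omega\cong\wt L\times\R$. Since all leaves $\lambda_t$ in $\wt\Omega$ are comparable, the value $s(\lambda_t)$ for $t$ in the interior interval of $\wt\Omega$ is determined by the leftmost-up / rightmost-down procedure applied within the cylinder $E_\infty|_{\wt\Omega}\cong\partial_\infty\wt L\times\R$, starting from $s(\lambda_{t_0})=(\xi,t_0)$. So the real content is: in this trivialized cylinder, the leftmost section through $(\xi,t_0)$ is the vertical line $\{\xi\}\times\R$.

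First I would observe that Lemma~\ref{lem:product-markers} supplies, for every $\zeta\in\partial_\infty\wt L$ and every $t$, a marker end of the form $\{\zeta\}\times[t,t+\epsilon]$ — i.e.\ a \emph{vertical} marker end in the trivialization. Going upward from $(\xi,t_0)$ by the leftmost-up rule: we head left in the fiber $\partial_\infty\lambda_{t_0}$ until we meet the first marker end in our finite collection $C$. But the marker ends available are vertical segments $\{\zeta\}\times[\cdot,\cdot]$, so ``the first one to the left'' is the one with $\zeta$ closest to $\xi$ on the left; following it upward and repeating, the staircase $\alpha_C$ consists of a horizontal jump from $\xi$ to some $\zeta_1<\xi$ (in the left direction), then vertical travel, then another small leftward jump, etc. Now take the supremum over all finite collections $C$: by Theorem~\ref{thm:leaf-pocket} (density of marker endpoints) the points $\zeta$ we can land on are dense, so the initial leftward jump can be made arbitrarily small, and the same is true at every subsequent stage. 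Hence for every leaf $\lambda_t$ above $\lambda_{t_0}$ the supremum $s_\xi(\lambda_t)=\sup_C(\min(\alpha_C\cap\partial_\infty\lambda_t))$ equals $(\xi,t)$ — there is no room to drift left. The downward (rightmost-down) direction is symmetric, with infima of rightward jumps that can be made arbitrarily small. This shows $s(\lambda_t)=(\xi,t)$ for all $t$ in the open interval associated to $\wt\Omega$, and by continuity of the section up to the (type-$0$) boundary leaves of $\wt\Omega$, also at the endpoints; if one wants the statement for \emph{all} $t\in\R$ with $\lambda_t$ ranging only over $\wt\Omega$, this is already the full interval.

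The main point to get right — and the only place where one must be slightly careful — is the passage from the staircase approximations $\alpha_C$ to the actual leftmost section via sup/inf: one has to check that the ``leftward jumps'' in each $\alpha_C$ genuinely shrink to zero as $C$ is enlarged, which is exactly where Theorem~\ref{thm:leaf-pocket} is used, together with the fact (Lemma~\ref{lem:marker-disjointness}) that distinct marker ends do not cross, so that following a vertical marker end upward does not force a later detour. Once the marker ends in a product region are known to be vertical in the canonical trivialization, the verticality of the leftmost section is essentially immediate, and no estimate beyond the density statement is needed. I therefore expect this corollary to be genuinely short, with the bulk of the work already done in Lemma~\ref{lem:product-markers}.
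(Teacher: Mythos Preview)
Your approach is correct and reaches the same conclusion, but the paper's argument is shorter and avoids returning to the staircase approximations. The paper observes a general property of leftmost sections (implicit in the Calegari--Dunfield construction): every marker end is either contained in or disjoint from $s(\Lambda)$. Since by Lemma~\ref{lem:product-markers} there is a vertical marker end through \emph{every} point of $E_\infty|_{\wt\Omega}$, any leftmost section passing through $(\xi,t_0)$ must contain the vertical marker end $\{\xi\}\times[t_0,t_0+\epsilon]$; iterating (equivalently, an open-and-closed argument on $\{t:s(\lambda_t)=(\xi,t)\}$) gives verticality on all of $\wt\Omega$.

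This bypasses two points you handle by hand. First, you invoke Theorem~\ref{thm:leaf-pocket} to make the leftward jumps ``arbitrarily small'', but Lemma~\ref{lem:product-markers} already puts a vertical marker end at $(\xi,t_0)$ itself, so you may include it in $C$ and have \emph{no} jump --- density is not needed. Second, your opening reduction (``$s(\lambda_t)$ is determined by leftmost-up/rightmost-down starting from $s(\lambda_{t_0})$'') is not literally how $s$ was built when $s$ started from some other leaf $\nu$: the direction of extension at $\lambda_{t_0}$ depends on where $\nu$ sits relative to $\lambda_{t_0}$, so on part of $\wt\Omega$ you may be comparing leftmost-up from $(\xi,t_0)$ with rightmost-down from $s(\nu)$. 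This is harmless here because both procedures yield vertical sections once marker ends in $\wt\Omega$ are known to be vertical (which follows from Lemma~\ref{lem:marker-disjointness} together with Lemma~\ref{lem:product-markers}), but the paper's ``contained-or-disjoint'' property applies uniformly to any leftmost section regardless of its starting point and so sidesteps the issue.
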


\begin{proof}
By the construction of leftmost sections, one can see that any leftmost section $s$ has the following property: any marker end is either contained in or disjoint from $s(\Lambda)$. Since through any point in $E_\infty|_{\wt{\Omega}}$ there is a vertical marker end, the leftmost section $s$ restricted to $\wt{\Omega}$ is forced to be vertical.
\end{proof}

We now consider the ends of markers intersecting a type-0 leaf. Let $\mu$ be a type-0 leaf covering a compact leaf $\Sigma$, and suppose $\mu$ is in the positive boundary of $\widetilde{\Omega}$. The case where $\mu$ is in the negative boundary of $\wt{\Omega}$ is similar.

As in the previous discussion, we identify every depth-one leaf in $\Omega$ with $L$, and every type-1 leaf in $\wt{\Omega}$ with $\wt{L}$. This gives us a homeomorphism between $E_\infty|_{\wt{\Omega}}$ and $\partial_\infty{\wt{L}}\times\R$. Recall that in Section \ref{sec:shadows}, we define a continuous map
\[
I_{\lambda_t\mu}:\partial_\infty\lambda_t\to\partial_\infty\mu
\]
for any $\lambda_t\in\wt{\Omega}$. Under the homeomorphism $\partial_\infty\lambda_t\cong\partial_\infty\wt{L}$, the map $I_{\lambda_t\mu}$ is the same map for any $t$ when viewed as a map from $\partial_\infty\wt{L}$ to $\partial_\infty\mu$. We denote this map by $I_{\wt{\Omega}\mu}$.

In $\Lambda$, $\wt{\Omega}\cup\mu$ is a half open interval. Let $\MM_{\mu,\wt{\Omega}}$ be the set of markers $m$ in $\mu\cup\wt{\Omega}$ with one side lying in $\mu$. By Corollary \ref{cor:vertical-section} and Lemma \ref{lem:marker-disjointness}, the end of such an $m$ intersects $\partial_\infty\mu$ at a single point $\xi_\mu(m)$ and intersects $E_\infty|_{\wt{\Omega}}$ in a vertical segment 
\[
\{(\xi_{\wt{\Omega}}(m),t)|~t>T\}
\]
where $T$ is a constant depending on $m$, and $\xi_{\wt\Omega}(m)$ is a point in $\partial_\infty\wt{L}$ depending on $m$ and independent of $t$ (Figure \ref{fig:special-markers}). 

\begin{figure}[h!]
\labellist
\pinlabel $\mu$ at 900 800
\pinlabel $\wt{\Omega}$ at 920 400
\pinlabel $\partial_\infty\mu$ at -60 800
\pinlabel $\partial_\infty\wt{L}\times\R$ at -100 400
\pinlabel $\xi_\mu(m)$ at 330 770
\pinlabel $\xi_{\wt{\Omega}}(m)\times[T,+\infty)$ at 550 470
\endlabellist
\includegraphics[scale=.2]{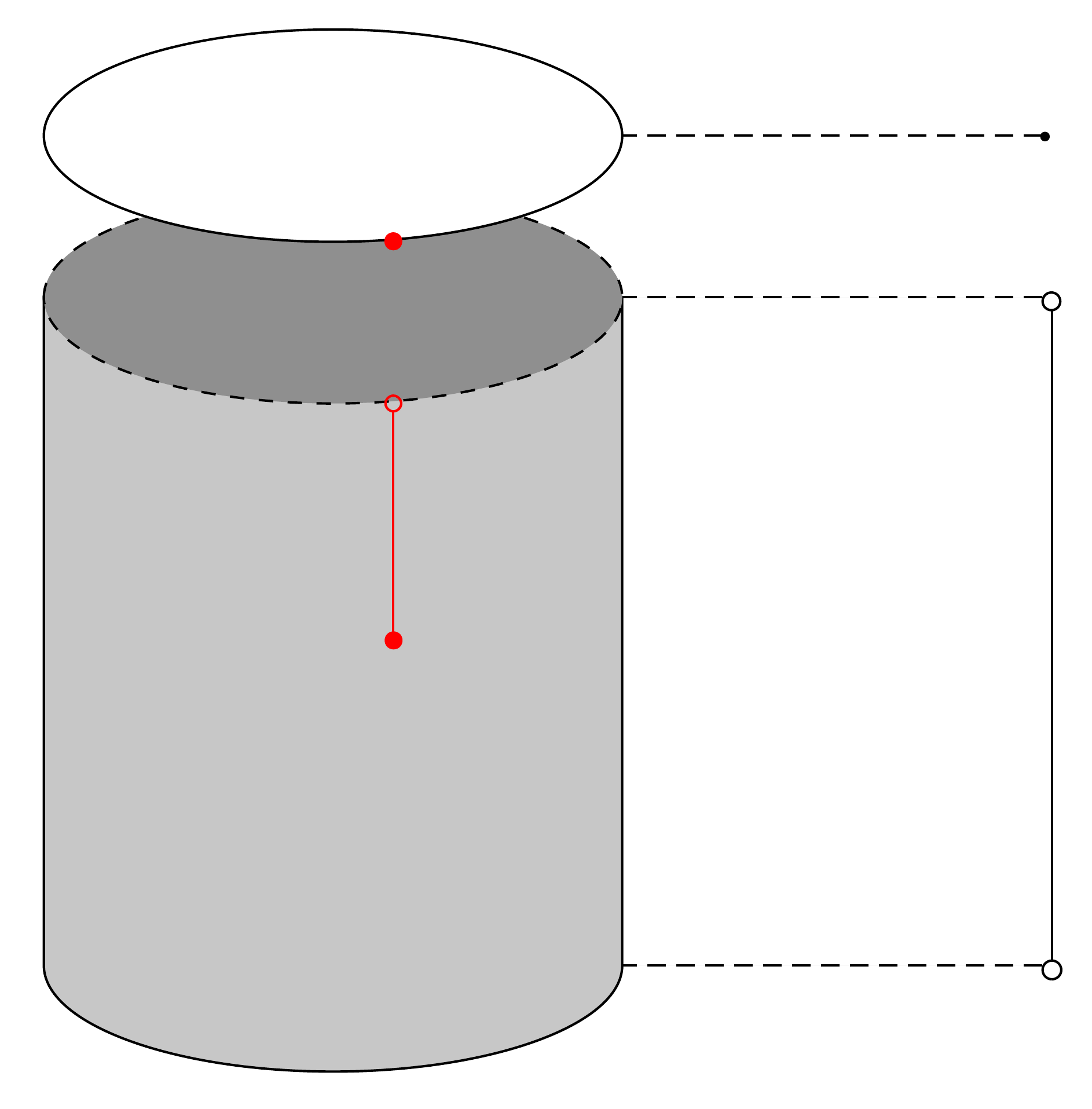}
\caption{The end of a marker $m$ (in red) in $\MM_{\mu,\wt{\Omega}}$.}\label{fig:special-markers}
\end{figure}

Define the set $\xi_{\wt\Omega}(\MM_{\mu,\wt{\Omega}}):=\{\xi_{\wt\Omega}(m)|~m\in\MM_{\mu,\wt{\Omega}}\}\subset\partial_\infty\wt{L}$. Intuitively, these are the directions on $\wt{L}$ in which $\mu$ does not diverge away from $\lambda_t$.

Recall that the positive escaping set $\UU^+\subset L$ is the set of points whose $\phi$-orbit escapes $\Omega$ in positive time. Let $\wt{\UU}^+$ be the preimage of $\UU^+$ in $\wt{L}$. There is a component $\wt{\UU}^+_\mu$ of $\wt{\UU}^+$ so that $x\in\lambda_t$ is a point of $\wt{\UU}^+_\mu$ if and only if $\wt{\phi}(x)$ hits $\mu$. The hitting map $H_\mu:\wt{\UU}^+_\mu\to\mu$ defined by
\[
H_\mu(x)=\wt{\phi}(x)\cap\mu
\]
is a homeomorphism. It is tautological that
\begin{equation}\label{eq:tautological}
p\circ H_\mu=p.
\end{equation}
where $p$ is the projection from $\mu$ to $p(\mu)$ as in Section \ref{sec:shadows}.

\begin{lem}\label{lem:trivial-holonomy}
Let $\alpha^*$ be an oriented simple closed geodesic in $\Sigma$ with trivial $\FF$-holonomy on the side of $\Omega$, and let $\wt{\alpha}^*$ be a lift of $\alpha^*$ to $\mu$. Denote the forward endpoint at infinity of $\wt{\alpha}^*$ by $\partial^+\wt{\alpha}^*$. Then there is a marker $m\in\MM_{\mu,\wt{\Omega}}$ so that $\xi_\mu(m)=\partial^+\wt{\alpha}^*$ and $\xi_{\wt\Omega}(m)\in I_{\wt{\Omega}\mu}^{-1}(\partial^+\wt{\alpha}^*)$.
\end{lem}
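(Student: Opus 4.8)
\textbf{Proof strategy for Lemma \ref{lem:trivial-holonomy}.} The plan is to realize the desired marker geometrically by flowing the geodesic $\wt{\alpha}^*$ from $\mu$ down into the product region $\wt{\Omega}$ and controlling the widths along the way using the triviality of the holonomy. First I would recall that $\alpha^*$ having trivial $\FF$-holonomy on the $\Omega$-side means that a neighborhood of $\alpha^*$ in $\Sigma$ lifts into the spiraling neighborhood as a family of parallel compact curves rather than a spiraling one; equivalently, the first-return map $h$ of $\phi|_\Omega$ fixes $\alpha^*$ pointwise (after isotopy), so there is an embedded annulus $A\subset\overline{\Omega}$ with one boundary $\alpha^*\subset\Sigma$ and the interior swept out by flowing $\alpha^*$ backward into $\Omega$, staying within the spiraling neighborhood. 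Lifting, $\wt{\alpha}^*\subset\mu$ bounds a lifted strip $\wt{A}\subset\overline{\wt\Omega}$, and for each $t$ large, $\wt{\phi}$ carries $\wt{\alpha}^*$ to a line $\wt{\alpha}^*_t\subset\lambda_t$ which covers $\alpha^*$ (viewed as a curve in the fiber $L$), hence is a quasigeodesic in $\lambda_t$ with well-defined forward endpoint. Since the identifications of the $\lambda_t$ with $\wt L$ are via the flow, this endpoint is a single point $\zeta\in\partial_\infty\wt L$ independent of $t$, and by the definition of $I_{\wt\Omega\mu}$ (as $Q_\mu\circ Q_{\lambda_t}^{-1}$, which just reads off the shadow) we get $I_{\wt\Omega\mu}(\zeta)=\partial^+\wt{\alpha}^*$.

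Next I would build the marker itself. Choose a basepoint $x_0\in\wt\alpha^*$ and let $r$ be the geodesic ray in $\mu$ from $x_0$ toward $\partial^+\wt{\alpha}^*$; since $\wt\alpha^*$ is a quasigeodesic with that endpoint, $r$ fellow-travels $\wt\alpha^*$, so $r$ lies within bounded distance of the strip $\wt A$. Flowing $r$ down into $\wt\Omega$, in each leaf $\lambda_t$ we obtain a ray $r_t$ landing at $\zeta$; as in the proof of Lemma \ref{lem:product-markers}, because the ends of depth-one leaves in a fibered region are asymptotic and the spiraling neighborhood has width controlled by $\epsilon_0$, the flow segments connecting $r_t$ to $r_{t'}$ have length $<\epsilon_0$ for $t,t'$ in a suitable range, and the quasigeodesic constants of the $r_t$ degenerate to $(1,0)$ as the parameter runs toward $\mu$. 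Pulling each $r_t$ tight to the genuine geodesic ray from its basepoint to $\zeta$ (respectively to $\partial^+\wt\alpha^*$ on $\mu$) and reparametrizing, I get an embedding $m:[0,1]\times\R_{\ge0}\to\wt M$ whose leafwise slices are geodesic rays and whose transversals have length $<\epsilon_0$ — that is, a marker. By construction its end meets $\partial_\infty\mu$ at $\partial^+\wt\alpha^*$ and its end in $E_\infty|_{\wt\Omega}$ is the vertical segment $\{\zeta\}\times[T,\infty)$, so $\xi_\mu(m)=\partial^+\wt\alpha^*$ and $\xi_{\wt\Omega}(m)=\zeta\in I_{\wt\Omega\mu}^{-1}(\partial^+\wt\alpha^*)$, as desired.

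The main obstacle I anticipate is the passage from the flowed quasigeodesics $r_t$ to genuine geodesics while keeping the marker both embedded and of uniformly bounded transversal width all the way up to the limiting leaf $\mu$ itself. Near $\mu$ the product structure degenerates (the leaves $\lambda_t$ spiral into $\Sigma$ and their metric is only asymptotically that of the cover of $\Sigma$), so one must argue that the quasigeodesic constants of the $r_t$ tend to $1$ uniformly — this is where the \emph{trivial holonomy} hypothesis is essential: if the holonomy were nontrivial, $\wt\alpha^*$ would not lift to a quasigeodesic staying at bounded distance from a flow-tube, and the flowed rays would wander. Concretely, I would use that $\alpha^*$ is fixed pointwise by the return map so that the strip $\wt A$ is genuinely flow-invariant up to the compactification, invoke the standardness of the hyperbolic metrics on the $\lambda_t$ (bounded injectivity radius, no embedded half-spaces, following \cite{CCHomot}) to get uniform Morse-type control, and then the tightening and width estimates proceed exactly as in Lemma \ref{lem:product-markers}. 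Everything else — the identification of the forward endpoint with $\zeta$ and with $\partial^+\wt\alpha^*$, and the computation $I_{\wt\Omega\mu}(\zeta)=\partial^+\wt\alpha^*$ — is formal once the marker exists.
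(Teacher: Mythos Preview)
Your approach is essentially the paper's: use trivial holonomy to produce an annulus from $\alpha^*$ into $\Omega$, lift to a half-infinite strip between $\wt\alpha^*\subset\mu$ and some $\lambda_T$, observe that the leafwise slices are lifts of a fixed simple closed curve on $L$ (hence quasigeodesics with a common ideal endpoint $\zeta$), shrink the strip so widths are below $\epsilon_0$, pull tight as in Lemma~\ref{lem:product-markers}, and identify $\xi_\mu(m)$ and $\xi_{\wt\Omega}(m)$ via the shared shadow $p\circ H_\mu=p$. One small wording issue: ``the first-return map $h$ fixes $\alpha^*$'' is not the right reformulation of trivial $\FF$-holonomy (the first-return map lives on $L$, not $\Sigma$), but what you actually use---the existence of the flow-annulus $A$---is exactly what trivial holonomy provides, so the argument goes through.
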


\begin{proof}
Perform a homotopy of $\alpha^*$ along $\phi$ into $\Omega$ for a short distance, so that the final image is a simple closed curve on a depth-one leaf in $\Omega$. This is possible because $\alpha^*$ has trivial $\FF$-holonomy on the side of $\Omega$. The full homotopy image is an annulus, denoted by $A$. We lift $A$ to $\wt{M}$ to get a two-ended infinite band $\wt{A}$ in $\wt{\Omega}\cup\mu$ with one side being $\wt\alpha^*$ and the other side on $\lambda_T$ for some $T$. Note that $A$ has bounded width because it is a lift of an annulus.

Fix a base point $x\in\wt{\alpha}^*$ and let $\wt{\alpha}^*(x)$ be the oriented ray in $\wt\alpha^*$ starting from $x$ toward $\partial^+\wt{\alpha}^*$. We restrict the lifted homotopy to the ray $\wt{\alpha}^*(x)$, and the restricted homotopy image is a one-ended infinite band $B\subset \wt{A}$ (Figure \ref{fig:trivial-holonomy}). The band $B$ has finite width between $\mu$ and $\lambda_T$, and we can make the width arbitrarily small by cutting the homotopy at $\lambda_{T'}$ for large enough $T'$. When $t>T'$, the ray $B\cap\lambda_t$ has a well-defined endpoint at infinity because it is contained in a lift of a simple closed curve on a hyperbolic depth-one leaf. Similar to the proof of Lemma \ref{lem:product-markers}, by taking even larger $T'$ and pulling tight the intersection of $B$ and $\lambda_t$ we get a marker $m\in\MM_{\mu,\wt{\Omega}}$. It is clear from the construction that $\xi_\mu(m)=\partial^+\wt{\alpha}$. The point $\xi_{\wt{\Omega}}(m)$ is the endpoint of the ray
\[
B\cap\lambda_t=H_\mu^{-1}(\wt{\alpha}(x)).
\]
By (\ref{eq:tautological}), the projections of $\wt{\alpha}(x)$ and $H_\mu^{-1}(\wt{\alpha}(x))$ to $\OO$ are identical, and both rays escape to infinity in the leaf. This implies $\partial^+\wt{\alpha}\in\partial p(\mu)\cap\partial p(\lambda_t)$ and 
\[
\xi_{\wt{\Omega}}(m)\in I_{\wt{\Omega}\mu}^{-1}(\partial^+\wt{\alpha}).
\]
\end{proof}

\begin{figure}[h!]
\labellist
\pinlabel $\Sigma$ at 200 370
\pinlabel $\alpha^*$ at 420 330
\pinlabel $\wt{\alpha}^*$ at 1320 380
\pinlabel $B$ at 1070 215
\pinlabel $\mu$ at 1500 300
\pinlabel $x$ at 1200 310
\endlabellist
\includegraphics[scale=.26]{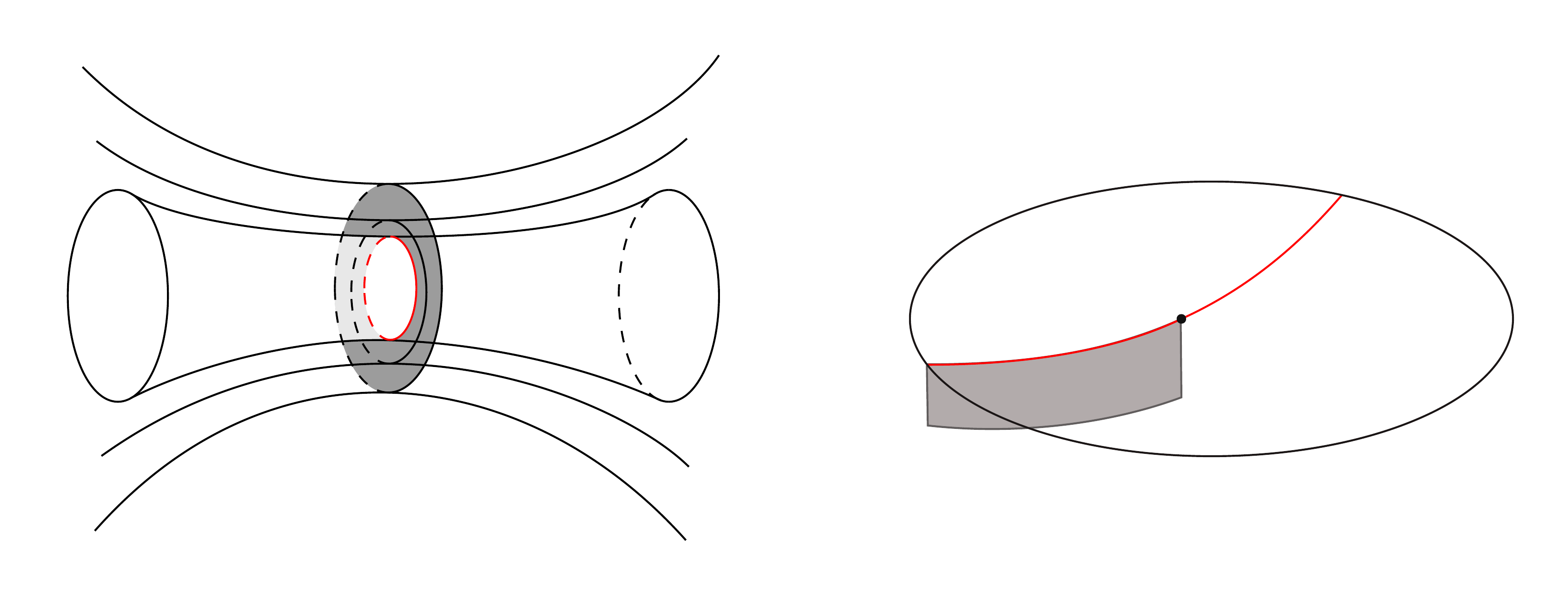}
\caption{}\label{fig:trivial-holonomy}
\end{figure}

To state the next lemma, we need one more definition. A simple closed curve $\beta\subset L$ is called a \textbf{$\Sigma$-juncture} if $\beta$ is a connected positive $f$-juncture, and $\beta$ covers a simple closed curve in $\Sigma$ under the covering map $\UU^+\to\partial^+\Omega$ (note that $\Sigma$ is only a component of $\partial^+\Omega$).

\begin{cor}\label{cor:juncture-endpoint}
Let $\beta\subset L$ be a $\Sigma$-juncture and let $\wt{\beta}$ be a lift of $\beta$ in $\wt{L}$ that lies in $\wt{\UU}^+_\mu$. Then both endpoints of $\wt{\beta}$ are in $\xi_{\wt{\Omega}}(\MM_{\mu,\wt{\Omega}})$.
\end{cor}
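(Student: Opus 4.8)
The plan is to reduce the statement to Lemma \ref{lem:trivial-holonomy} by producing, for each of the two rays of $\wt{\beta}$, a simple closed geodesic in $\Sigma$ with trivial $\FF$-holonomy on the $\Omega$-side whose forward endpoint (after choosing an appropriate orientation of a lift) equals the endpoint of that ray. The key observation is that $\beta$, being a $\Sigma$-juncture, covers a simple closed curve $\beta_\Sigma\subset\Sigma$ under the covering map $\UU^+\to\partial^+\Omega$, and the hitting map $H_\mu$ intertwines the picture in $\wt{L}$ with the picture in $\mu$; so a lift $\wt{\beta}\subset\wt{\UU}^+_\mu$ is carried by $H_\mu$ to a lift $\wt{\beta}^*$ of $\beta_\Sigma$ in $\mu$, and by the tautological identity $p\circ H_\mu=p$ these two have the same endpoints in $\partial\OO$, hence (via the monotone maps $Q_{\lambda_t}$ and $Q_\mu$) the endpoints of $\wt{\beta}$ in $\partial_\infty\wt{L}$ correspond under $I_{\wt\Omega\mu}$ to the endpoints of $\wt{\beta}^*$ in $\partial_\infty\mu$.

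First I would pass to the compact leaf: a $\Sigma$-juncture $\beta$ projects to a simple closed curve $\beta_\Sigma$ in $\Sigma$, and $\beta_\Sigma$ bounds (a component of) a regular neighborhood of an end of $L$, so it is the boundary of a spiraling neighborhood of $\Sigma$ on the $\Omega$-side; in particular, $\beta_\Sigma$ is freely homotopic into $\Omega$ and has trivial $\FF$-holonomy on that side (the holonomy of a leaf bounding a spiraling neighborhood is trivial on the side it spirals into). Next I would put $\beta_\Sigma$ in a good position: replacing it by the simple closed geodesic $\alpha^*$ in its free homotopy class in $\Sigma$ — this is permissible since $\Sigma$ is hyperbolic and $\beta_\Sigma$ is essential (a juncture is not nullhomotopic) and simple, and a simple closed curve is freely homotopic to a simple closed geodesic with trivial holonomy preserved — we are exactly in the hypothesis of Lemma \ref{lem:trivial-holonomy}. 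Now orient $\alpha^*$; for each of the two orientations we obtain, for a suitable lift $\wt{\alpha}^*$ of $\alpha^*$ to $\mu$, a marker $m\in\MM_{\mu,\wt\Omega}$ with $\xi_{\wt\Omega}(m)\in I_{\wt\Omega\mu}^{-1}(\partial^+\wt{\alpha}^*)$. The remaining bookkeeping is to check that, choosing the two orientations and lifts appropriately, $I_{\wt\Omega\mu}^{-1}(\partial^+\wt{\alpha}^*)$ hits precisely the two endpoints of our fixed $\wt{\beta}$: since $\wt{\beta}=H_\mu^{-1}(\wt{\beta}^*)$ for a lift $\wt{\beta}^*$ of $\alpha^*$ in $\mu$ and $H_\mu$ preserves the projection to $\OO$, the two shadow-endpoints of $\wt\beta$ and $\wt\beta^*$ coincide, and by construction of the maps $Q_{\lambda_t}$, $Q_\mu$ and hence $I_{\wt\Omega\mu}$ these correspond; as $\partial^+\wt\beta^*$ ranges over both endpoints of $\wt\beta^*$ (one per orientation of $\alpha^*$), the preimages under $I_{\wt\Omega\mu}$ contain both endpoints of $\wt\beta$.

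The main obstacle I anticipate is the orientation-and-lift matching in the last step: $I_{\wt\Omega\mu}$ need not be injective, so I must argue that the specific endpoint of $\wt\beta$ — not merely some point in a gap — lies in $\xi_{\wt\Omega}(\MM_{\mu,\wt\Omega})$. This is where I would use that $\wt\beta\subset\wt{\UU}^+_\mu$ and that its endpoints lie in $\partial p(\lambda_t)\cap\partial p(\mu)$ (as in the proof of Lemma \ref{lem:trivial-holonomy}, where $\partial^+\wt\alpha\in\partial p(\mu)\cap\partial p(\lambda_t)$), so that the endpoint is in the core of both monotone maps and hence is genuinely realized, not collapsed. A secondary point to verify carefully is that the geodesic representative $\alpha^*$ still has trivial holonomy on the $\Omega$-side and that its lift $H_\mu(\wt\beta)$ is the \emph{same} as the geodesic lift $\wt\alpha^*$ up to the bounded homotopy — this follows because the homotopy from $\beta_\Sigma$ to $\alpha^*$ in $\Sigma$ lifts to a bounded homotopy in $\mu$, so the two lifts share endpoints at infinity, which is all that is needed.
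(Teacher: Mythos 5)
Your overall strategy coincides with the paper's: project $\beta$ to the simple closed curve $\alpha\subset\Sigma$ that it covers, observe that $\alpha$ has trivial $\FF$-holonomy on the $\Omega$-side because $\beta$ is a juncture, pass to the geodesic tightening $\alpha^*$, and feed this into Lemma \ref{lem:trivial-holonomy}, using $p\circ H_\mu=p$ to relate $\wt{\beta}$ to its flow-image in $\mu$. The gap is in your final identification step. You use only the \emph{statement} of Lemma \ref{lem:trivial-holonomy}, which places $\xi_{\wt{\Omega}}(m)$ somewhere in the set $I_{\wt{\Omega}\mu}^{-1}(\partial^+\wt{\alpha}^*)$, and you then need this set to be the singleton $\{\partial^+\wt{\beta}\}$. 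Your justification --- that the endpoint of $\wt{\beta}$ corresponds to a point of $\partial p(\lambda_t)\cap\partial p(\mu)\cap\partial\OO$, hence lies in the cores of the relevant monotone maps and is ``not collapsed'' --- does not deliver this: a core point can perfectly well be an \emph{endpoint} of a gap, so $I_{\wt{\Omega}\mu}^{-1}(\partial^+\wt{\alpha}^*)$ could a priori be a nontrivial interval containing $\partial^+\wt{\beta}$ only as an endpoint. This would happen, for instance, if $\partial^+\wt{\alpha}^*$ were also the $Q_\mu$-image of a side $e$ of $p(\mu)$, i.e.\ if $[\alpha]$ shared an axis in $\mu$ with some $g_e$ --- a configuration you have not excluded. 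In that scenario the lemma's statement does not force $\xi_{\wt{\Omega}}(m)=\partial^+\wt{\beta}$.

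The paper closes this by invoking the \emph{proof} of Lemma \ref{lem:trivial-holonomy} rather than its statement: there, $\xi_{\wt{\Omega}}(m)$ is identified as the ideal endpoint of the ray $B\cap\lambda_t=H_\mu^{-1}(\wt{\alpha}(x))$. Taking $\wt{\alpha}=\wt{\phi}_{\geq0}(\wt{\beta})\cap\mu=H_\mu(\wt{\beta})$, that ray is literally a ray of $\wt{\beta}$, so $\xi_{\wt{\Omega}}(m)$ is the forward endpoint of $\wt{\beta}$ on the nose, with no need to invert $I_{\wt{\Omega}\mu}$. (A further small conflation in your write-up: $H_\mu(\wt{\beta})$ is a lift of $\alpha$, not of its geodesic tightening $\alpha^*$; one should note separately that the free homotopy from $\alpha$ to $\alpha^*$ is compact, so the two lifts fellow-travel in $\mu$ and the band/marker construction and its endpoint are unaffected.) Replacing your ``not collapsed'' step with this direct identification would complete the argument.
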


\begin{proof}
Suppose $\beta$ covers a simple closed curve $\alpha$ on $\Sigma$. Fix an orientation of $\beta$, which induces an orientation of $\alpha$. The curve $\alpha$ has trivial $\FF$-holonomy on the side of $\Omega$ because $\beta$ is an $f$-juncture. For any $T$, we can view $\wt{\beta}$ as an embedded line in $\lambda_T$. There is a lift $\wt{\alpha}\subset\mu$ of $\alpha$ given by $\wt{\alpha}=\wt{\phi}_{\geq0}(\wt{\beta})\cap\mu$. Let $\partial^+\wt{\alpha}$ be the forward endpoint of $\wt{\alpha}$. Applying Lemma \ref{lem:trivial-holonomy} to the geodesic tightening $\alpha^*$ of $\alpha$, we obtain a marker $m\in\MM_{\mu,\wt{\Omega}}$. But since $\wt{\alpha}$ is obtained by flowing $\wt{\beta}$ to $\mu$, the proof of Lemma \ref{lem:trivial-holonomy} implies that $\xi_{\wt{\Omega}}(m)$ is exactly the forward endpoint of $\wt{\beta}$. The same proof applies for the backward endpoint of $\wt{\beta}$, proving the lemma.

\end{proof}

\begin{lem}\label{lem:marker-closure}
    The closure in $\partial_\infty\wt{L}$ of $\xi_{\wt{\Omega}}(\MM_{\mu,\wt{\Omega}})$ is the limit set $\mathcal{L}(\wt{\UU}^+_\mu)$ of $\wt{\UU}^+_\mu$.
\end{lem}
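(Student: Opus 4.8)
The plan is to prove the two inclusions $\overline{\xi_{\wt\Omega}(\MM_{\mu,\wt\Omega})}\subseteq\mathcal L(\wt\UU^+_\mu)$ and $\mathcal L(\wt\UU^+_\mu)\subseteq\overline{\xi_{\wt\Omega}(\MM_{\mu,\wt\Omega})}$ separately, using Corollary \ref{cor:juncture-endpoint} together with the structure of the escaping set of an end-periodic map. For the first inclusion, I would argue that for any $m\in\MM_{\mu,\wt\Omega}$ the vertical ray $\xi_{\wt\Omega}(m)\times[T,\infty)$ is, by construction, the flow image $H_\mu^{-1}$ of a ray in $\mu$ that escapes a contracting end; tracing this back through the product structure $\wt\Omega\cong\wt L\times\R$, the point $\xi_{\wt\Omega}(m)$ is the ideal endpoint of a geodesic ray in $\wt L$ all of whose points lie in $\wt\UU^+_\mu$ (the component of the escaping set corresponding to $\mu$, as recalled before Lemma \ref{lem:trivial-holonomy}). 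Hence $\xi_{\wt\Omega}(m)$ is a limit of points of $\wt\UU^+_\mu$, so it lies in $\mathcal L(\wt\UU^+_\mu)$; since the limit set is closed, the closure of $\xi_{\wt\Omega}(\MM_{\mu,\wt\Omega})$ is contained in it.

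For the reverse inclusion I would use the end-periodic structure. The component $\wt\UU^+_\mu$ is a regular neighborhood of a contracting end lifted to $\wt L$; by Proposition 4.76 of \cite{CANTWELL_CONLON_FENLEY_2021} (and the surrounding theory) its boundary is a disjoint union of lifts of the $\Sigma$-junctures, and $\wt\UU^+_\mu$ deformation retracts onto (a neighborhood of) the full preimage of these junctures. The key point is that the limit set of a subsurface with such a boundary structure is the closure of the union of the limit sets of its boundary components; equivalently, $\mathcal L(\wt\UU^+_\mu)$ is the closure of the set of endpoints of all lifts of $\Sigma$-junctures contained in $\wt\UU^+_\mu$. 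Since the hyperbolic structure on $L$ has bounded injectivity radius, translating a single $\Sigma$-juncture lift by $\pi_1(L)\cap\mathrm{Stab}(\wt\UU^+_\mu)$ already produces a set of geodesics whose endpoints are dense in $\mathcal L(\wt\UU^+_\mu)$. By Corollary \ref{cor:juncture-endpoint}, every such endpoint lies in $\xi_{\wt\Omega}(\MM_{\mu,\wt\Omega})$, so $\mathcal L(\wt\UU^+_\mu)\subseteq\overline{\xi_{\wt\Omega}(\MM_{\mu,\wt\Omega})}$.

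I expect the main obstacle to be the second inclusion, specifically making precise the claim that the endpoints of lifts of $\Sigma$-junctures are dense in $\mathcal L(\wt\UU^+_\mu)$. One has to be careful that $\wt\UU^+_\mu$ is not all of $\wt L$ (by Convention \ref{conv:non-fiber}), so its limit set is a proper closed subset of $\partial_\infty\wt L$, and one must use that $\UU^+/f$ is a compact subsurface whose relative fundamental group is carried by the junctures, so that the covering $\wt\UU^+_\mu\to\UU^+_\mu/(\text{stabilizer})$ has the junctures' preimages forming a net. The cleanest route is probably: pick any $\zeta\in\mathcal L(\wt\UU^+_\mu)$, realize it as the limit of a sequence $x_n\in\wt\UU^+_\mu$, connect $x_n$ to a fixed $\Sigma$-juncture lift $\wt\beta_0$ by paths of uniformly bounded length inside $\wt\UU^+_\mu$ (possible since $\UU^+_\mu/(\text{stab})$ is compact with boundary the junctures), push these paths off by deck transformations so that $\gamma_n\wt\beta_0$ stays within bounded distance of $x_n$, and conclude that the endpoints of $\gamma_n\wt\beta_0$ converge to $\zeta$; each such endpoint lies in $\xi_{\wt\Omega}(\MM_{\mu,\wt\Omega})$ by Corollary \ref{cor:juncture-endpoint}. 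The rest is a routine hyperbolic-geometry estimate using the $\mathrm{CAT}(-1)$ (or Gromov-hyperbolic) property of $\wt L$: geodesic rays staying within bounded Hausdorff distance have the same endpoint at infinity.
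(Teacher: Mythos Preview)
Your overall strategy matches the paper's: prove the two inclusions separately and use Corollary \ref{cor:juncture-endpoint} for the reverse one. However, both halves have genuine gaps.

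For the first inclusion, the claim that a general marker $m\in\MM_{\mu,\wt\Omega}$ is ``by construction the flow image $H_\mu^{-1}$ of a ray in $\mu$'' is not justified: markers are defined only by the $\epsilon_0$ width condition, and their transversals are not required to be flow segments. The paper's argument is different and actually uses that width bound. Since $\epsilon_0$ was chosen so that the $\epsilon_0$-neighborhood of $\Sigma$ lies inside a spiraling neighborhood, the ray $r_t=\mathrm{Image}(m)\cap\lambda_t$ is $\epsilon_0$-close to $\mu$, hence lies in the (lifted) spiraling neighborhood, hence every point of $r_t$ flows forward to $\mu$; this gives $r_t\subset\wt\UU^+_\mu$ and so $\xi_{\wt\Omega}(m)\in\mathcal L(\wt\UU^+_\mu)$.

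The more serious gap is in the reverse inclusion. You assert that the boundary of $\wt\UU^+_\mu$ is a disjoint union of lifts of $\Sigma$-junctures and that the quotient by the stabilizer is compact. Neither holds: $\UU^+$ is the \emph{full} positive escaping set $\bigcup_{n\ge0}f^{-n}(U^+)$, not a single regular neighborhood, so its boundary in $L$ is the singular lamination $W^-\subset\FF^s_L$, not a system of junctures; and the component of $\UU^+$ covered by $\wt\UU^+_\mu$ is an infinite-type subsurface, so $\wt\UU^+_\mu/\mathrm{Stab}_{\pi_1(L)}(\wt\UU^+_\mu)$ is not compact and the ``uniformly bounded length'' claim fails. (Using the stabilizer in $\pi_1(\Omega)$ instead does give a compact quotient, but that group does not act by $\wt L$-isometries, so the distance argument still breaks.) The paper closes this gap with substantial outside input: Handel--Miller theory \cite{CANTWELL_CONLON_FENLEY_2021} shows that geodesic tightenings of negative $f$-iterates of a juncture converge to $\Lambda_{\mathrm{HM}}^-$, and \cite{LMTEndpA} identifies $\Lambda_{\mathrm{HM}}^-$ with $W^-$ as laminations on $\partial_\infty\wt L$. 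Together with the nowhere-density of $\mathcal L(\wt\UU^+_\mu)$ (Lemma \ref{lem:positive-dense-limit}), this yields that endpoints of lifts of $\Sigma$-junctures (in fact, of the iterates $f^{-in}(\beta)$ of a single one) are dense in $\mathcal L(\wt\UU^+_\mu)$, which is exactly the statement you needed.
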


\begin{proof}
Suppose $m$ is a marker in $\MM_{\mu,\wt{\Omega}}$. The intersection of the image of $m$ with any $\lambda_t$ is a ray $r_t$. After identifying $\lambda_t$ with $L$, we claim that $r_t$ in contained in $\wt{\UU}^+_\mu$. This is because the image of $m$ is $\epsilon_0$-close to $\mu$. By our choice of $\epsilon_0$, the $\epsilon_0$-neighborhood of $\Sigma$ in $M$ is contained in an $\FF$-spiraling neighborhood $N(\Sigma)$ of $\Sigma$, and every $\phi$-orbit intersecting the spiraling neighborhood will hit $\Sigma$. So $r_t$ is contained in $\wt{\UU}^+_\mu$ and the ideal endpoint of $r_t$, which is exactly $\xi_{\wt{\Omega}}(m)$ by definition, will then be in $\mathcal{L}(\wt{\UU}^+_\mu)$. This shows $\xi_{\wt{\Omega}}(\MM_{\mu,\wt{\Omega}})\subset \mathcal{L}(\wt{\UU}^+_\mu)$.

By Corollary \ref{cor:juncture-endpoint}, it now suffices to show that the endpoints of lifts of any $\Sigma$-juncture in $\wt{\UU}^+_\mu$ are dense inside $\mathcal{L}(\wt{\UU}^+_\mu)$. We first recall some known facts and constructions.  By \cite{CANTWELL_CONLON_FENLEY_2021}, the geodesic tightenings of a system of positive $f$-junctures limit to the negative Handel-Miller geodesic lamination $\Lambda_{\text{HM}}^-$ under negative iterations of $f$, and $\Lambda_{\text{HM}}^-$ is independent of the choice of junctures. On the other hand, the intersection of $\FF^s$ with $L$ induces a singular foliation $\FF_L^s$ on $L$. We define $W^-$ as the restriction of $\FF_L^s$ to the complement of $\UU^+$. The complement of $\UU^+$ is saturated by leaves of $\FF_L^s$ and $W^-$ is a singular sublamination of $\FF_L^s$ \cite{LMTEndpA}. Let $\wt{W}^-$ be the lift of $W^-$ to $\wt{L}$. The singular lamination $\wt{W}^-$ determine an abstract lamination on $\partial_\infty\wt{L}$ by a standard construction (Section \ref{sec:preliminaries}). It follows from \cite[Theorem 8.4]{LMTEndpA} that $\wt{W}^-=\Lambda_{\text{HM}}^-$ as abstract laminations (the paper proves it for circular pseudo-Anosov flows, but the same method applies to any pseudo-Anosov flow without perfect fits).

Now fix a $\Sigma$-juncture $\beta$. Suppose that $\beta$ is the boundary of a contracting neighborhood of a contracting end $\mathcal{E}$, and $n$ is the smallest positive integer so that $f^n(\mathcal{E})=\mathcal{E}$. The above facts implies that the endpoints of $\partial\wt{\UU}^+_\mu$ can be approximated by endpoints of lifts of $\{f^{-in}(\beta)\}_{i\geq 0}$. The limit set $\mathcal{L}(\wt{\UU}^+_\mu)$ is nowhere dense by the next lemma (Lemma \ref{lem:positive-dense-limit}), so points in $\mathcal{L}(\wt{\UU}^+_\mu)$ are approximated by endpoints of $\partial\mathcal{L}(\wt{\UU}^+_\mu)$. Also note that if $\beta$ is a $\Sigma$-juncture, so is $f^{-in}(\beta)$. Therefore, the endpoints of lifts of any $\Sigma$-juncture in $\wt{\UU}^+_\mu$ are dense inside $\mathcal{L}(\wt{\UU}^+_\mu)$, which completes the proof.
\end{proof}

We give a proof of the following fact used in the proof of Lemma \ref{lem:marker-closure}.

\begin{lem}\label{lem:positive-dense-limit}
The limit set $\mathcal{L}(\wt{\UU}^+_\mu)$ is nowhere dense in $\partial_\infty \wt{L}$.
\end{lem}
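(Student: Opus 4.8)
The plan is to show that the limit set $\mathcal{L}(\wt{\UU}^+_\mu)$ cannot contain an open interval of $\partial_\infty\wt{L}$, i.e.\ that its closure has empty interior; being closed, nowhere-denseness follows. The key point is that $\UU^+_\mu/f \cong \Sigma$ is a \emph{closed} surface, while $\wt{\UU}^+_\mu$ is only a proper subsurface of $\wt{L}$ because $M$ has no trivial product region (Convention \ref{conv:non-fiber}), so $\wt{\UU}^+_\mu$ has nonempty boundary consisting of lifts of $\Sigma$-junctures, and outside $\wt{\UU}^+_\mu$ there is ``room'' in $\wt{L}$. First I would set up the covering picture: $\wt{\UU}^+_\mu \to \UU^+_\mu \cong \Sigma$, realized as $p^{-1}|_{p(\mu)}$ composed with the flow identification, so that $\wt{\UU}^+_\mu$ is the universal cover of $\Sigma$ sitting inside $\wt{L}$, and its frontier in $\wt{L}$ is a disjoint union of properly embedded lines, each a lift of a component of a positive $f$-juncture $\beta$ (a $\Sigma$-juncture).

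The main step is to produce, near any candidate open interval $A \subset \partial_\infty\wt{L}$ that we suppose lies in $\mathcal{L}(\wt{\UU}^+_\mu)$, a boundary line of $\wt{\UU}^+_\mu$ whose two endpoints both lie in the interior of $A$; this bounds a half-plane $H$ contained in the complement of $\wt{\UU}^+_\mu$ (on the side away from $\wt{\UU}^+_\mu$), and $\partial_\infty H$ is an open subinterval of $A$ disjoint from $\mathcal{L}(\wt{\UU}^+_\mu)$, a contradiction. To get such a boundary line: since $A$ is claimed to lie in the limit set, there are elements $\gamma_k\in\pi_1(\Sigma) = \pi_1(\UU^+_\mu)$ (acting on $\wt L$ through the inclusion $\wt\UU^+_\mu\hookrightarrow\wt L$) whose attracting fixed points accumulate on points of $A$; equivalently, lifts $\wt\beta_k = \gamma_k\wt\beta$ of a fixed $\Sigma$-juncture $\beta$ have both endpoints tending into $A$. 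Because $A$ is open and $\mathcal{L}(\wt\UU^+_\mu)$ is assumed dense on $A$, for $k$ large both endpoints of $\wt\beta_k$ are interior to $A$. One $\Sigma$-juncture component $\wt\beta_k$ separates $\wt L$; the side not containing $\wt\UU^+_\mu$ is a half-plane $H_k$ (here one uses that $\wt\beta_k$ is a frontier line of $\wt\UU^+_\mu$, and that $\wt\UU^+_\mu$ is connected and lies entirely on one side — this is where the structure of junctures bounding regular neighborhoods of ends enters). Then $H_k \cap \wt\UU^+_\mu = \varnothing$, so $\partial_\infty H_k \cap \mathcal{L}(\wt\UU^+_\mu)$ is at most the two endpoints of $\wt\beta_k$; but $\partial_\infty H_k$ is a nondegenerate open interval inside $A$, contradicting density.

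The main obstacle I anticipate is the separation/connectedness bookkeeping: verifying that $\wt\UU^+_\mu$ genuinely lies on a single side of each frontier line $\wt\beta_k$, and that such a frontier line exists with \emph{both} endpoints in the prescribed small interval rather than straddling it. The first issue is handled because $\wt{\UU}^+_\mu$ is the full preimage of the connected set $\UU^+_\mu$ and $\beta$ bounds a regular neighborhood $U_{\mathcal E}$ of an end $\mathcal E$ with $f^n(U_{\mathcal E})\subsetneq U_{\mathcal E}$, so each component of $\partial U_{\mathcal E}$ has a well-defined ``inside'' (toward $\mathcal E$) and ``outside,'' and $\UU^+_\mu$ lies on the outside of the lift. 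The second issue is dealt with by exploiting that the deck group $\pi_1(\Sigma)$ acts with minimal limit set equal to \emph{its own} limit set $\mathcal{L}(\wt\UU^+_\mu)$, so the translates $\gamma_k\wt\beta$ of a compact piece of $\wt\beta$ have diameter in $\partial_\infty\wt L$ tending to $0$ as $\gamma_k$ goes to infinity towards a point of $A$ — standard convergence-group behavior of a surface group acting on its universal cover. I would also invoke, if convenient, that a convex-cocompact (here, the deck action is cocompact on $\wt\UU^+_\mu$) subgroup of $\pi_1(\wt L)$'s isometry group has limit set either all of $\partial_\infty\wt L$ or nowhere dense, and rule out the first case precisely because $\wt\UU^+_\mu \neq \wt L$ (no trivial product region), which is the conceptual heart of the argument.
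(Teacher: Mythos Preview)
There is a genuine gap in your outline, and it comes from a misidentification of the frontier of $\wt{\UU}^+_\mu$. A lift of a $\Sigma$-juncture $\beta$ is \emph{not} a frontier line of $\wt{\UU}^+_\mu$: since $\UU^+=\bigcup_{n\ge 0}f^{-n}(U^+)$ is an increasing union of the juncture neighborhoods, each juncture $\beta$ (and all its $f$-iterates) lies in the \emph{interior} of $\UU^+$. The actual topological boundary of $\UU^+$ in $L$ consists of leaves of the stable lamination $\FF_L^s$ (this is the fact from \cite{LMTEndpA} the paper invokes), i.e.\ leaves of $W^-$. Consequently, for a juncture lift $\wt\beta_k\subset\wt{\UU}^+_\mu$ both sides of $\wt\beta_k$ meet $\wt{\UU}^+_\mu$, and your key step ``the half-plane $H_k$ on the other side is disjoint from $\wt{\UU}^+_\mu$'' fails. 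If instead you try to run the argument with genuine frontier leaves $\ell\subset\wt W^-$, then any such $\ell$ with both endpoints in $A$ would produce a gap of $\mathcal{L}(\wt{\UU}^+_\mu)$ inside $A$, which is exactly what you want --- but you no longer have a mechanism to find one: your production of $\wt\beta_k$ used $\pi_1(\Sigma)$-translates, and $\pi_1(\Sigma)$ does not act by isometries on $\wt L$. Elements of $\pi_1(\Sigma)\subset\pi_1(\Omega)=\Z\ltimes\pi_1(L)$ can have nontrivial $\Z$-component and then act on $\wt L$ as lifts of powers of the monodromy $h$ (cf.\ the proof of the claim in Lemma~\ref{lem:nc-continuous-extension}), so neither the convergence-group shrinking of translates nor the ``convex-cocompact subgroup of $\mathrm{Isom}(\wt L)$'' dichotomy you invoke at the end is available.

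The paper's proof avoids both problems by working with $\pi_1(L)$ rather than $\pi_1(\Sigma)$. Since $\pi_1(L)$ acts on $\wt L$ by hyperbolic isometries and $L$ has bounded injectivity radius, one can pick $g\in\pi_1(L)$ with a fixed point in the hypothetical interval $I\subset\mathcal{L}(\wt{\UU}^+_\mu)$. The components of $\wt{\UU}^+$ are permuted by $\pi_1(L)$, so $g(\wt{\UU}^+_\mu)$ is either equal to or disjoint from $\wt{\UU}^+_\mu$; in either case one gets a contradiction (in the first, iterating $g^{\pm1}$ on $I$ forces $\mathcal{L}(\wt{\UU}^+_\mu)=\partial_\infty\wt L$, contradicting properness, which in turn uses that the frontier leaves of $W^-$ have distinct endpoints). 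This is the idea you should substitute for the $\pi_1(\Sigma)$-based steps.
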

\begin{proof}
The subsurface $\UU^+\subset L$ is bounded by leaves of $\FF_L^s$ \cite{LMTEndpA}, each of which has distinct well-defined endpoints at infinity. In particular, $\mathcal{L}(\wt{\UU}^+_\mu)$ is a proper subset of $\partial_\infty \wt{L}$. If $\mathcal{L}(\wt{\UU}^+_\mu)$ contains a nontrivial interval $I$, we can find a hyperbolic element $g\in\pi_1(L)$ with a fixed point in $I$. Here we are using the fact that $L$ has bounded injectivity radius. But $g(\wt{\UU}^+_\mu)$ is either disjoint from or equal to $\wt{\UU}^+_\mu$ since $\UU^+$ is embedded. So $g^n(I)$ is contained in $\mathcal{L}(\wt{\UU}^+_\mu)$ for any integer $n$, a contradiction.
\end{proof}

\begin{lem}\label{lem:dense-rule}
There is a subset $\MM_{\mu,\wt{\Omega}}'$ of $\MM_{\mu,\wt{\Omega}}$ such that
\begin{itemize}
\item[(1)] the set $\xi_\mu(\MM_{\mu,\wt{\Omega}}')$ is dense in $\partial_\infty\mu$;
\item[(2)] the set $\xi_{\wt{\Omega}}(\MM_{\mu,\wt{\Omega}}')$ is dense in $\mathcal{L}(\wt{\UU}^+_\mu)$;
\item[(3)] for any $m\in\MM_{\mu,\wt{\Omega}}'$, we have
\[
\xi_\mu(m)=I_{\wt{\Omega}\mu}(\xi_{\wt{\Omega}}(m)).
\]
\end{itemize}
\end{lem}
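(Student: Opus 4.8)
My plan is to build $\MM'_{\mu,\wt{\Omega}}$ entirely out of markers of the kind produced by Lemma~\ref{lem:trivial-holonomy}, because such markers satisfy condition (3) for free. Indeed, if $m$ is obtained from Lemma~\ref{lem:trivial-holonomy} applied to an oriented simple closed geodesic $\alpha^*\subset\Sigma$ with trivial $\FF$-holonomy on the side of $\Omega$ and to a lift $\wt{\alpha}^*\subset\mu$, then that lemma gives $\xi_\mu(m)=\partial^+\wt{\alpha}^*$ and $\xi_{\wt{\Omega}}(m)\in I_{\wt{\Omega}\mu}^{-1}(\partial^+\wt{\alpha}^*)$, which is precisely the equality $\xi_\mu(m)=I_{\wt{\Omega}\mu}(\xi_{\wt{\Omega}}(m))$ asked for in (3). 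So the whole problem reduces to choosing enough such markers to also force the two density statements. Concretely, I fix a $\Sigma$-juncture $\beta$ (one exists, as already used in Lemma~\ref{lem:marker-closure}); each negative iterate $f^{-in}(\beta)$, $i\ge 0$, is again a $\Sigma$-juncture and covers the same simple closed curve $\alpha\subset\Sigma$ as $\beta$; let $\alpha^*$ be the geodesic tightening of $\alpha$, which still has trivial $\FF$-holonomy on the $\Omega$-side. I take $\MM'_{\mu,\wt{\Omega}}$ to be the set of markers given by Lemma~\ref{lem:trivial-holonomy} applied to $\alpha^*$, with either orientation and with every lift $\wt{\alpha}^*\subset\mu$. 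Condition (3) holds for every element of $\MM'_{\mu,\wt{\Omega}}$ by the remark above.

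For (1): as $\wt{\alpha}^*$ runs over all lifts of the fixed closed geodesic $\alpha^*$ to $\mu$, the endpoints $\partial^+\wt{\alpha}^*$ form a single $\pi_1(\Sigma)$-orbit in $\partial_\infty\mu$. Since $\Sigma$ is closed hyperbolic, the $\pi_1(\Sigma)$-action on $\partial_\infty\mu$ is minimal, so this orbit is dense, and therefore $\xi_\mu(\MM'_{\mu,\wt{\Omega}})$ is dense in $\partial_\infty\mu$.

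For (2): by Corollary~\ref{cor:juncture-endpoint} and its proof, for every lift $\wt{\beta}\subset\wt{\UU}^+_\mu$ of every $f^{-in}(\beta)$, both endpoints of $\wt{\beta}$ equal $\xi_{\wt{\Omega}}(m)$ for a marker $m$ produced by Lemma~\ref{lem:trivial-holonomy} applied to the tightening of the curve covered by $f^{-in}(\beta)$, hence $m\in\MM'_{\mu,\wt{\Omega}}$; here one uses that, via the hitting homeomorphism $H_\mu$, the lifts of $\beta$ inside $\wt{\UU}^+_\mu$ correspond to the lifts of $\alpha$ in $\mu$. On the other hand, the proof of Lemma~\ref{lem:marker-closure} shows that the endpoints of all lifts of the $\Sigma$-junctures $\{f^{-in}(\beta)\}_{i\ge 0}$ in $\wt{\UU}^+_\mu$ are dense in the limit set $\mathcal{L}(\wt{\UU}^+_\mu)$. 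Hence $\xi_{\wt{\Omega}}(\MM'_{\mu,\wt{\Omega}})$ is dense in $\mathcal{L}(\wt{\UU}^+_\mu)$, and conversely $\xi_{\wt{\Omega}}(\MM'_{\mu,\wt{\Omega}})\subseteq\xi_{\wt{\Omega}}(\MM_{\mu,\wt{\Omega}})\subseteq\mathcal{L}(\wt{\UU}^+_\mu)$ by Lemma~\ref{lem:marker-closure}, so its closure is exactly $\mathcal{L}(\wt{\UU}^+_\mu)$.

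I do not anticipate a real obstacle: the analytic content — existence and asymptotics of the markers in $\MM_{\mu,\wt{\Omega}}$, and the identification of the closure of $\xi_{\wt{\Omega}}(\MM_{\mu,\wt{\Omega}})$ with $\mathcal{L}(\wt{\UU}^+_\mu)$ — has already been carried out in Lemma~\ref{lem:trivial-holonomy}, Corollary~\ref{cor:juncture-endpoint} and Lemma~\ref{lem:marker-closure}, and the statement is essentially an assembly of these. The only point needing care is bookkeeping: verifying that the markers realizing the dense subset of $\mathcal{L}(\wt{\UU}^+_\mu)$ are genuinely of the Lemma~\ref{lem:trivial-holonomy} form (so that (3) automatically applies to them), and that running over lifts of $\beta$ in $\wt{\UU}^+_\mu$ hits all lifts of $\alpha$ in $\mu$, which is what makes the density in (1) hold on the full circle.
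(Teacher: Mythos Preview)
Your proposal is correct and follows essentially the same approach as the paper: define $\MM'_{\mu,\wt{\Omega}}$ as markers produced by Lemma~\ref{lem:trivial-holonomy}, so that (3) is automatic, get (1) from minimality of the $\pi_1(\Sigma)$-action on $\partial_\infty\mu$, and get (2) by observing that the density argument in the proof of Lemma~\ref{lem:marker-closure} already uses only such markers. The paper simply takes $\MM'_{\mu,\wt{\Omega}}$ to be \emph{all} markers arising from Lemma~\ref{lem:trivial-holonomy} (over every simple closed geodesic with trivial holonomy on the $\Omega$-side), whereas you restrict to the single geodesic $\alpha^*$ covered by a fixed $\Sigma$-juncture; this is harmless since all $f^{-in}(\beta)$ cover the same $\alpha$, so the markers needed for (2) already lie in your smaller collection, and your final worry about ``hitting all lifts of $\alpha$ in $\mu$'' is unnecessary because (1) follows directly from your definition via all lifts $\wt{\alpha}^*\subset\mu$.
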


\begin{proof}
The subset $\MM_{\mu,\wt{\Omega}}'$ can be taken to be the set of markers in $\MM_{\mu,\wt{\Omega}}$ that arise from Lemma \ref{lem:trivial-holonomy}. Item (3) is exactly the content of Lemma \ref{lem:trivial-holonomy}. Item (1) is true because the endpoints of the lifts of a simple closed curve is dense in $\partial_\infty\mu$. The second part of the proof of Lemma \ref{lem:marker-closure} only used markers in $\MM_{\mu,\wt{\Omega}}'$, so we have already proved item (2).
\end{proof}

For any $\xi\in\partial_\infty\wt{L}$,  we use $c_\xi$ to denote the vertical section of $E_\infty|_{\wt{\Omega}}$ given by $c_\xi(\lambda_t)=(\xi,t)$.

\begin{lem}\label{lem:U-gaps}
For any $\xi\in\partial_\infty(\wt{L})$, the vertical section $c_\xi$ extends continuously to $\mu$ by setting $c_\xi(\mu)=I_{\wt{\Omega}\mu}(\xi)$.
\end{lem}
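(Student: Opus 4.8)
The plan is to prove continuity of the extended section $c_\xi$ at the leaf $\mu$ by comparing it with the leftmost sections passing through nearby markers, using the density statements from Lemma \ref{lem:dense-rule}. First I would fix $\xi \in \partial_\infty\wt{L}$ and consider the candidate value $c_\xi(\mu) = I_{\wt{\Omega}\mu}(\xi)$. To show continuity, I need to verify that if $(\eta_n, t_n) \to (\xi, +\infty)$ with $(\eta_n, t_n)$ in $E_\infty|_{\wt{\Omega}}$, then the sections $c_{\eta_n}$ restricted near $\mu$ force the value at $\mu$ to converge to $I_{\wt{\Omega}\mu}(\xi)$. The key tool is that $I_{\wt{\Omega}\mu}$ is itself continuous and monotone (established in Section \ref{sec:shadows}), so it suffices to approximate $\xi$ by points $\xi_{\wt{\Omega}}(m)$ for $m \in \MM'_{\mu,\wt{\Omega}}$ and use that for such markers the endpoint relation $\xi_\mu(m) = I_{\wt{\Omega}\mu}(\xi_{\wt{\Omega}}(m))$ holds exactly (item (3) of Lemma \ref{lem:dense-rule}).

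The core argument proceeds as follows. Given $\xi$, by item (2) of Lemma \ref{lem:dense-rule} there is a sequence of markers $m_k \in \MM'_{\mu,\wt{\Omega}}$ with $\xi_{\wt{\Omega}}(m_k) \to \xi'$ for some $\xi'$ in the limit set $\mathcal{L}(\wt{\UU}^+_\mu)$; when $\xi$ already lies in $\mathcal{L}(\wt{\UU}^+_\mu)$ we can take $\xi' = \xi$, and when $\xi$ lies in a complementary gap of $\mathcal{L}(\wt{\UU}^+_\mu)$ the endpoints of that gap are approached by such $\xi_{\wt{\Omega}}(m_k)$ from the two sides, and $I_{\wt{\Omega}\mu}$ collapses the whole gap to a point. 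In either case the marker ends $\{(\xi_{\wt{\Omega}}(m_k), t)\}_{t > T_k}$ are vertical segments in $E_\infty|_{\wt{\Omega}}$ converging to the vertical section $c_\xi$ (on the relevant side), and each extends to $\mu$ at the point $\xi_\mu(m_k) = I_{\wt{\Omega}\mu}(\xi_{\wt{\Omega}}(m_k))$. By Lemma \ref{lem:marker-disjointness}, these marker ends are disjoint from or contained in any leftmost section, so a leftmost section whose restriction to $\wt{\Omega}$ is the vertical line $c_\xi$ (which exists by Corollary \ref{cor:vertical-section}) is squeezed between consecutive $m_k$'s near $\mu$; since $I_{\wt{\Omega}\mu}(\xi_{\wt{\Omega}}(m_k)) \to I_{\wt{\Omega}\mu}(\xi)$ by continuity of $I_{\wt{\Omega}\mu}$, the value of $c_\xi$ at $\mu$ must be $I_{\wt{\Omega}\mu}(\xi)$. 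One then checks that $c_\xi$ so extended is genuinely continuous at $\mu$ in the topology of $E_\infty$, using the trivialization $E_\infty|_{\wt{\Omega}} \cong \partial_\infty\wt{L} \times \R$ and the continuity of $I_{\wt{\Omega}\mu}$ together with the standard fact that for any transversal through $\mu$ the bundle topology near $\mu$ is compatible with the limiting behavior just described.

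The main obstacle I anticipate is handling the case when $\xi$ lies in a gap of the limit set $\mathcal{L}(\wt{\UU}^+_\mu)$ — that is, when $\xi$ is not itself an accumulation point of marker endpoints $\xi_{\wt{\Omega}}(m)$. There the continuity of the extension relies crucially on the fact that $I_{\wt{\Omega}\mu}$ (equivalently, the monotone map $Q_\mu \circ Q_{\lambda_t}^{-1}$ from Section \ref{sec:shadows}) is constant on the closure of such a gap, so that the approximating markers from both sides of the gap carry the same value $\xi_\mu(m_k)$ in the limit. I would isolate this as the key geometric input, tracing it back to Lemma \ref{lem:nc-shadow} and Lemma \ref{lem:nc-continuous-extension}: a gap of $\mathcal{L}(\wt{\UU}^+_\mu)$ corresponds under $Q_{\lambda_t}$ to an interval of $\partial_\infty\lambda_t$ that $Q_{\lambda_t}$ sends to a point or to a side $e$ of $p(\mu)$, and the definition of $I_{\wt{\Omega}\mu}$ via $I_{\lambda_t\mu}$ is precisely designed so that $I_{\wt{\Omega}\mu}$ collapses this interval. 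The remaining bookkeeping — that vertical marker ends really do converge to $c_\xi$ and that the bundle topology is as claimed — is routine given Lemma \ref{lem:product-markers} and Corollary \ref{cor:vertical-section}.
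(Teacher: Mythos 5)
Your proposal is correct and follows essentially the same route as the paper: approximate $\xi$ (or, when $\xi$ sits in a gap, the endpoints of that gap) from both sides by markers in $\MM'_{\mu,\wt{\Omega}}$, use item (3) of Lemma \ref{lem:dense-rule} together with the monotonicity/continuity of $I_{\wt{\Omega}\mu}$ to see that the values $\xi_\mu(m)$ converge to $I_{\wt{\Omega}\mu}(\xi)$ from both sides, and squeeze the vertical section between these marker ends via Lemma \ref{lem:marker-disjointness}. The only cosmetic difference is that you phrase the gap analysis in terms of complementary intervals of $\mathcal{L}(\wt{\UU}^+_\mu)$ while the paper works directly with the gaps of $I_{\wt{\Omega}\mu}$; the key geometric input you isolate (that $I_{\wt{\Omega}\mu}$ collapses such a gap) is exactly what the paper's argument uses implicitly.
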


\begin{proof}
Let $G_\xi$ be the gap of $I_{\wt{\Omega}\mu}$ containing $\xi$, and let $\partial_rG_\xi$ and $\partial_lG_\xi$ be the leftmost and the rightmost endpoint of $G_\xi$ respectively. By Lemma \ref{lem:dense-rule}, we have sequences $\{m_n^+\}$ and $\{m_n^-\}$ in $\MM'_{\mu,\wt{\Omega}}\subset\MM_{\mu,\wt{\Omega}}$ with the properties
\begin{itemize}
\item the points $\xi_{\wt{\Omega}}(m_n^-)$ limits to $\partial_l G_\xi$ from the left and the points $\xi_{\wt{\Omega}}(m_n^+)$ limits to $\partial_r G_\xi$ from the right;
\item we have $\xi_\mu(m_n^\pm)=I_{\wt{\Omega}\mu}(\xi_{\wt{\Omega}}(m_n^\pm))$.
\end{itemize}
These properties imply that the points $\xi_\mu(m_n^-)$ limits to $I_{\wt{\Omega}\mu}(\xi)$ from the left and the points $\xi_\mu(m_n^+)$ limits to $I_{\wt{\Omega}\mu}(\xi)$ from the right by item (3) of Lemma \ref{lem:dense-rule} and Lemma \ref{lem:marker-disjointness}. The two sequences of marker ends pin down the endpoint of $c_\xi$ to be $I_{\wt{\Omega}\mu}(\xi)$ (see the left hand side of \ref{fig:rules}).
\end{proof}

\begin{figure}[!htb]
\labellist
\endlabellist
\includegraphics[scale=.2]{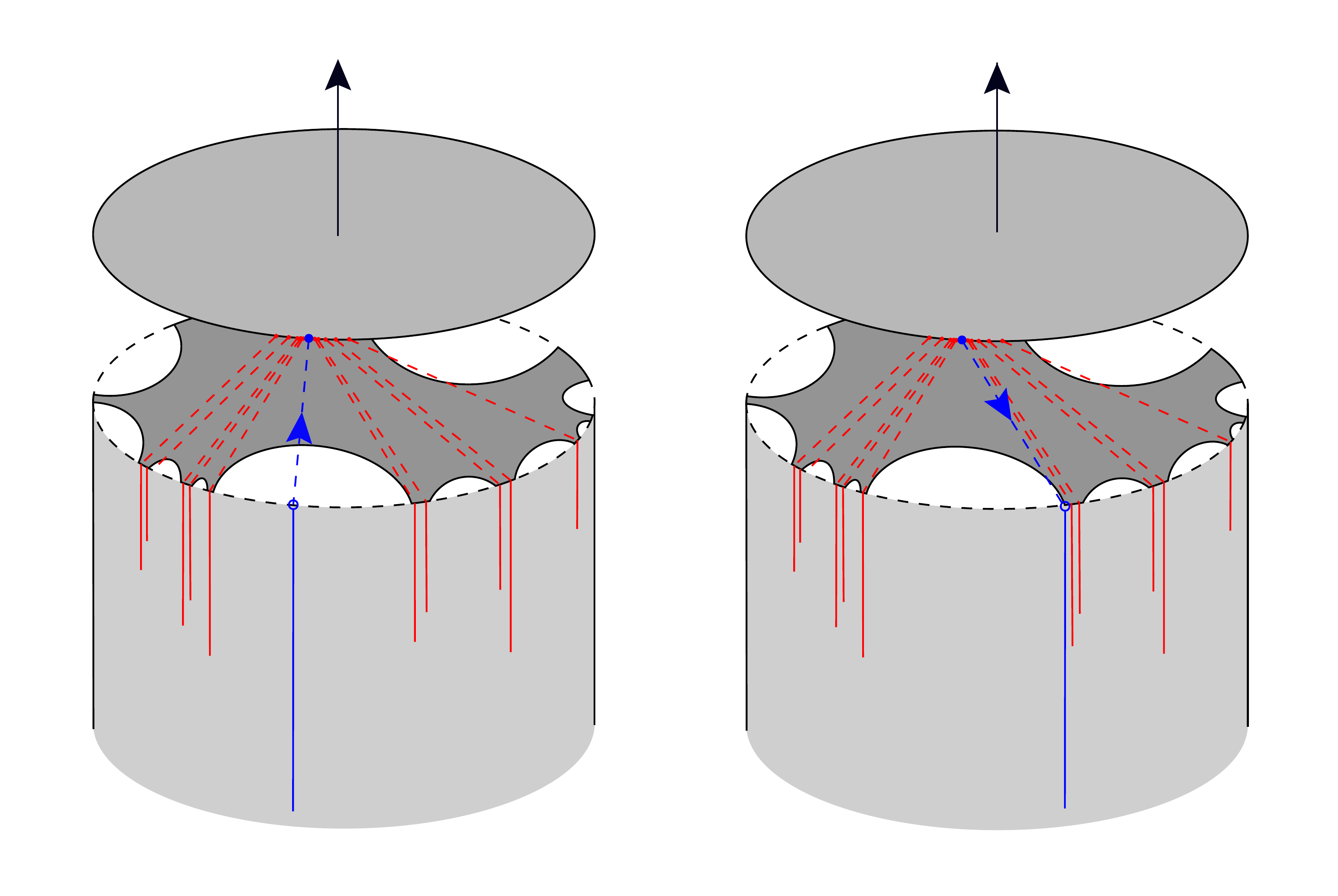}
\caption{The subset $\MM_{\mu,\wt{\Omega}}'$ (red) pins down the way to extend the leftmost sections (blue), with the blue arrows indicating the direction of extension. The lefthand side corresponds to Lemma \ref{lem:U-gaps} and the righthand side corresponds to Lemma \ref{lem:0-to-1}.}\label{fig:rules}
\end{figure}

\begin{lem}\label{lem:0-to-1}
Let $\eta$ be a point in $\partial_\infty\mu$. Suppose $\nu$ is any leaf of $\wt{\FF}$, $\xi$ is any point in $\partial_\infty\nu$ and $s_\xi$ is the leftmost section starting from $\xi$. If the direction of extension of $s_\xi$ at $\mu$ points from $\mu$ to $\wt{\Omega}$, i.e. $\mu$ is closer to $\nu$ in $\Lambda$ than $\wt{\Omega}$, and $s_\xi(\mu)=\eta$, then $s(\lambda_t)$ is the rightmost endpoint of $I_{\wt{\Omega}\mu}^{-1}(\eta)$. Here we view a singleton as a closed interval of length zero.
\end{lem}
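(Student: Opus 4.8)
The plan is to mirror the proof of Lemma \ref{lem:U-gaps}, but now ``read in the opposite direction'': there we knew the section was vertical over $\wt{\Omega}$ and deduced its value at $\mu$; here we know the value $s_\xi(\mu)=\eta$ and the section enters $\wt{\Omega}$ \emph{through} $\mu$, so we must see where the leftmost-up (or rightmost-down) rule sends it. The key point is that the markers in $\MM_{\mu,\wt{\Omega}}'$ supplied by Lemma \ref{lem:dense-rule} act as barriers that the leftmost section cannot cross, and among the barriers lying to one side of $\eta$ the relevant ones accumulate onto the \emph{rightmost} preimage of $\eta$ under $I_{\wt{\Omega}\mu}$.

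First I would set up notation: let $E=I_{\wt{\Omega}\mu}^{-1}(\eta)$, a closed interval (possibly a point) in $\partial_\infty\wt{L}$, and let $\partial_r E$ denote its rightmost endpoint. Since $s_\xi$ reaches $\mu$ from the $\nu$-side and continues into $\wt{\Omega}$, the extension of $s_\xi$ over the half-open interval $\wt{\Omega}\cup\mu\subset\Lambda$ is governed by the one-sided rule (leftmost-up if $\wt{\Omega}$ is on the positive side of $\mu$, rightmost-down otherwise; I will treat one case, the other being symmetric). By Corollary \ref{cor:vertical-section}, whatever value $s_\xi$ takes at some $\lambda_{t_0}\in\wt{\Omega}$, it takes the corresponding vertical value $c_\xi$ for all $t$, so it suffices to identify $s_\xi(\lambda_{t})$ for a single $t$, equivalently to identify the point $\zeta\in\partial_\infty\wt{L}$ with $c_\zeta\subset s_\xi(\Lambda)$.

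Next I would run the barrier argument. Take the sequences $\{m_n^+\}\subset\MM'_{\mu,\wt{\Omega}}$ from Lemma \ref{lem:dense-rule}: their $\mu$-endpoints $\xi_\mu(m_n^+)$ approach $\eta$ from the right (using density of $\xi_\mu(\MM'_{\mu,\wt{\Omega}})$ in $\partial_\infty\mu$ together with item (3)), and by item (3) their $\wt{\Omega}$-endpoints $\xi_{\wt{\Omega}}(m_n^+)$ satisfy $I_{\wt{\Omega}\mu}(\xi_{\wt{\Omega}}(m_n^+))=\xi_\mu(m_n^+)$; since the only preimages of points just to the right of $\eta$ lie just to the right of $\partial_r E$, these $\wt{\Omega}$-endpoints converge to $\partial_r E$ from the right. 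Each marker end $m_n^+$ is, by Lemma \ref{lem:marker-disjointness}, either contained in or disjoint from $s_\xi(\Lambda)$, and since $s_\xi(\mu)=\eta$ lies strictly to the left of $\xi_\mu(m_n^+)$, the leftmost-up section cannot cross $m_n^+$; hence $c_\zeta$ lies weakly to the left of the vertical part $\{(\xi_{\wt{\Omega}}(m_n^+),t)\}$ of $m_n^+$ for every $n$, giving $\zeta\le\partial_r E$. For the reverse inequality, I would argue that the leftmost-up rule forces $s_\xi$ to go \emph{as far right as possible} subject to not crossing any marker with $\mu$-endpoint at $\eta$ or to its left; using Lemma \ref{lem:trivial-holonomy} (or directly the markers of $\MM'_{\mu,\wt{\Omega}}$ whose $\mu$-endpoint equals $\eta$, coming from the endpoints of $\wt{\alpha}^*$ for curves $\alpha^*$ with trivial holonomy) one produces markers whose $\wt{\Omega}$-endpoint is exactly $\partial_r E$, or markers with $\mu$-endpoint approaching $\eta$ from the left whose $\wt{\Omega}$-endpoints approach $\partial_r E$ from the left; these pin $\zeta$ from below, so $\zeta=\partial_r E$. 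This matches the right-hand picture in Figure \ref{fig:rules}.

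The main obstacle I anticipate is the reverse inequality — showing $s_\xi$ actually \emph{attains} the rightmost preimage rather than stopping short. One must rule out the possibility that the leftmost-up staircase, on its way up from $\xi$ through the leaves $\lambda_i$ to $\mu$ and then into $\wt{\Omega}$, has already been ``caught'' by some marker end that confines it strictly left of $\partial_r E$; this requires knowing that the family of markers whose $\mu$-endpoint is exactly $\eta$ is rich enough (which is where the trivial-holonomy construction of Lemma \ref{lem:trivial-holonomy}, applied to the curve whose lift has forward endpoint $\partial_r E$ when $E$ is nondegenerate, or a limiting argument when $E$ is a point) is essential, and that the approximating staircases $\alpha_C$ defining $s_\xi$ can be chosen to use these markers. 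Care is also needed with the two cases (whether $\wt{\Omega}$ is positively or negatively adjacent to $\mu$, i.e.\ leftmost-up vs.\ rightmost-down), but these are genuinely symmetric once the orientation conventions of Section \ref{sec:shadows} are matched with those of Section \ref{sec:universal-circle}.
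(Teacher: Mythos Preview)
Your plan is essentially the paper's argument: the proof there also takes the sequence $m_n\in\MM'_{\mu,\wt{\Omega}}$ from Lemma~\ref{lem:dense-rule} with $\xi_\mu(m_n)\to\eta$ from the right, observes by monotonicity of $I_{\wt{\Omega}\mu}$ and item~(3) that $\xi_{\wt{\Omega}}(m_n)\to\partial_r E$ from the right, and then says the rightmost--down construction (since $\mu\in\partial^+\wt{\Omega}$, extending into $\wt{\Omega}$ means going \emph{down}) forces the value over $\wt{\Omega}$ to be $\partial_r E$. The paper's proof is a short sketch; your upper bound $\zeta\le\partial_r E$ is exactly its content.

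Where your proposal goes wrong is in the reverse inequality. You write that markers with $\mu$-endpoint approaching $\eta$ from the \emph{left} have $\wt{\Omega}$-endpoints approaching $\partial_r E$ from the left. That is false: by item~(3) of Lemma~\ref{lem:dense-rule} and monotonicity, such markers have $\xi_{\wt{\Omega}}(m_k^-)\to\partial_l E$ (the \emph{leftmost} endpoint of $E$) from the left, so they do not pin $\zeta$ up to $\partial_r E$. Likewise, Lemma~\ref{lem:trivial-holonomy} only asserts $\xi_{\wt{\Omega}}(m)\in I_{\wt{\Omega}\mu}^{-1}(\partial^+\wt{\alpha}^*)$; it does not single out $\partial_r E$. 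So neither of the two mechanisms you propose for the lower bound works as stated.

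The correct way to see the lower bound is closer to what the paper leaves implicit. For \emph{any} finite collection $C$, the first marker $m_1\in C$ the rightmost--down staircase meets at level $\mu$ has $\xi_\mu(m_1)$ strictly to the right of $\eta$ for generic $\eta$; then by Lemma~\ref{lem:U-gaps} (which forces $\xi_\mu(m)=I_{\wt{\Omega}\mu}(\xi_{\wt{\Omega}}(m))$ for \emph{every} $m\in\MM_{\mu,\wt{\Omega}}$, not just those in $\MM'_{\mu,\wt{\Omega}}$) one gets $\xi_{\wt{\Omega}}(m_1)>\partial_r E$, and since the staircase over $\wt{\Omega}$ only moves right thereafter, $\max(\alpha_C\cap\partial_\infty\lambda_t)\ge\partial_r E$. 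When $E$ is nondegenerate and there happens to be a marker with $\xi_\mu(m)=\eta$ exactly, Lemma~\ref{lem:marker-disjointness} and the right-side sequence $m_n$ together force $\xi_{\wt{\Omega}}(m)=\partial_r E$ (otherwise $m$ and $m_n$ would have to cross for large $n$), which again gives the bound. This last point is what your proposal is missing; once you have it, the rest of your plan matches the paper.
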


\begin{rmk}\label{rmk}
If $\mu$ is in the negative boundary of $\wt{\Omega}$, the lemma remains true if we replace ``the rightmost endpoint" by ``the leftmost endpoint". The proof is the same.
\end{rmk}

\begin{proof}
By Lemma \ref{lem:dense-rule}, there is a sequence of markers $m_n\in\MM'_{\mu,\wt{\Omega}}\subset\MM_{\mu,\wt{\Omega}}$ so that $\xi_\mu(m_n)$ limits to $\eta$ from the right. By the monotonicity of $I_{\wt{\Omega}\mu}$ and item (3) of Lemma \ref{lem:dense-rule}, we have $\xi_{\wt{\Omega}}(m_n)$ limits to the rightmost endpoint of $I_{\wt{\Omega}\mu}^{-1}(\eta)$ from the right. The value of $s_\xi$ on $\wt{\Omega}$ is determined by going down from $\eta$ and following the rightmost-down rule. The lemma can be seen by running the explicit construction. See also the right hand side of Figure \ref{fig:rules}.
\end{proof}

Lemma \ref{lem:U-gaps} and Lemma \ref{lem:0-to-1} give us the complete rules to build the leftmost section starting from a given point. Lemma \ref{lem:U-gaps} tells us how to extend the leftmost section from a product region to an adjacent type-0 leaf, and Lemma \ref{lem:0-to-1} tells us how to go from a type-0 leaf to an adjacent product region.

\section{Building the homeomorphism}\label{sec:homeomorphism}

Suppose $\lambda$ is a leaf of $\FF$ and $\xi$ is a point in $\partial_\infty\lambda$. Let $V_\xi$ denote $I_\lambda^{-1}(\xi)\subset\partial\OO$, which is either a closed interval or a singleton. If $V_\xi$ is a closed interval, let $\partial_lV_\xi$ and $\partial_rV_\xi$ be the leftmost and the rightmost endpoints of $V_\xi$ respectively. Note by our convention, left means clockwise and right means counterclockwise. We say $V_\xi$ is a stable (resp. unstable) gap of $\lambda$ if $Q_\lambda^{-1}(\xi)$ is a boundary $\FF_\OO^{s}$-leaf (resp. $\FF_\OO^{u}$-leaf) of $p(\lambda)$. Note that a closed interval of $\partial\OO$ cannot be a stable gap of a leaf while being an unstable gap of another. Indeed, a leaf of $\FF_\OO^s$ and a leaf of $\FF_\OO^u$ cannot bound an ideal bigon because $\phi$ has no perfect fits. We say a closed interval of $\partial\OO$ is a stable (resp. an unstable) gap if it is a stable (resp. an unstable) gap of a leaf of $\wt{\FF}$.

Since $\Su_{\mathrm{left}}$ is the completion of the set $LS$ of leftmost sections, to define the homeomorphism $T:\Su_{\mathrm{left}}\to\partial\OO$, it suffices to define $T$ on $LS$ and show that it admits an extension. For any point $\xi$ in $E_\infty$, let $s_\xi$ be the leftmost section starting from $\xi$. The set $LS^*$ of \textbf{pointed leftmost sections} is the set
\[
\{(s,\xi)|~s\in LS, \xi\in E_\infty, s=s_\xi\}.
\]
We point out that this definition is not redundant, for it is possible to have $s_\xi=s_{\xi'}$ for $\xi\neq\xi'$. There is a natural forgetful map $\pi:LS^*\to LS$ defined by forgetting the starting point. We define a map $T^*:LS^*\to\partial\OO$ as follows. For a pointed leftmost section $(s,\xi)$, consider $V_\xi$. If $V_\xi$ is a single point, define $T^*(s,\xi)=V_\xi$. If $V_\xi$ is an unstable gap, define $T^*(s,\xi)=\partial_lV_\xi$. If $V_\xi$ is a stable gap, define $T^*(s,\xi)=\partial_rV_\xi$.

Theorem \ref{thm:main} follows from the following theorem.

\begin{thm}\label{thm:homeo-of-circles}
    The map $T^*$ descends to a map $T:LS\to\partial\OO$ and extends continuously to a map from $\Su_\mathrm{left}$ to $\partial\OO$, which we will again denote by $T$. The extension $T$ is injective and preserves the cyclic order, hence a homeomorphism. Moreover, $T$ is $\pi_1(M)$-equivariant, and for any $\lambda\in\Lambda$ and any $s\in\Su_\mathrm{left}$, we have $U_\lambda(s)=I_\lambda(T(s))$. In other words, $T$ is an isomorphism of universal circles between $\Su_\mathrm{left}$ and $\partial_\OO$.
\end{thm}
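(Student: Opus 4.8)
The plan is to verify in turn the four assertions of Theorem \ref{thm:homeo-of-circles}: well-definedness of $T$ on $LS$, cyclic-order-preservation (hence injectivity and continuous extendibility), $\pi_1(M)$-equivariance, and the structure-map identity $U_\lambda = I_\lambda \circ T$. The conceptual engine for all of these is the pair of Lemmas \ref{lem:U-gaps} and \ref{lem:0-to-1}, which translate the inductive construction of a leftmost section $s_\xi$ into the language of the maps $I_{\wt\Omega\mu}$ and the shadows $p(\lambda)$, together with Corollary \ref{cor:vertical-section} which says leftmost sections are vertical inside product regions.

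First I would show $T^*$ descends through $\pi:LS^*\to LS$, i.e.\ that $T^*(s,\xi)=T^*(s,\xi')$ whenever $s_\xi=s_{\xi'}$. The idea: if two starting points give the same leftmost section, then by the construction rules their sections agree on every leaf, and in particular $\xi$ and $\xi'$ both lie on $s(\lambda)=s(\lambda')$ for the appropriate leaves; chasing through Lemma \ref{lem:0-to-1} and Lemma \ref{lem:U-gaps} one sees that $V_\xi$ and $V_{\xi'}$ are nested or equal gaps with a common left endpoint (if unstable) or common right endpoint (if stable), so the chosen endpoint is the same. Here is the first place the choice of left endpoint for unstable gaps and right endpoint for stable gaps is forced: it matches the asymmetry in the leftmost-up / rightmost-down rule. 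Next, to define $T$ on the whole of $LS$ and extend it, I would prove cyclic-order preservation: given three leftmost sections $s_1, s_2, s_3$, pick (by \cite[Lemma 6.25]{Calegari:2003aa}) an embedded line $\ell$ in $\Lambda$ along which they have a well-defined cyclic order; realize this cyclic order at a single leaf $\lambda$ on $\ell$ as the cyclic order of $s_1(\lambda), s_2(\lambda), s_3(\lambda)$ in $\partial_\infty\lambda$; then apply the monotone surjection $Q_\lambda^{-1}$ (Lemmas \ref{lem:c-continuous-extension}, \ref{lem:nc-continuous-extension}) and note that $Q_\lambda^{-1}$ together with the left/right endpoint convention sends distinct points of $\partial_\infty\lambda$ to appropriately ordered points of $\partial\OO$, so that $T^*(s_i,\cdot)$ inherits the cyclic order. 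Once $T$ is monotone (order-preserving) and has dense image — density follows because the $I_\lambda$-images of marker endpoints are dense by Thurston's Leaf Pocket Theorem \ref{thm:leaf-pocket} combined with Lemma \ref{lem:marker-closure} — it extends to a continuous surjection $\Su_{\mathrm{left}}\to\partial\OO$, and injectivity will follow from the structure-map identity together with the fact that distinct leftmost sections are distinguished by their value on some leaf.

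For equivariance, I would simply observe that the whole construction of $s_\xi$, the maps $I_\lambda$, $I_{\wt\Omega\mu}$, the shadows, and the endpoint conventions (left/right is preserved by the orientation-preserving $\pi_1(M)$-action on both $\Su_{\mathrm{left}}$ and $\partial\OO$) are $\pi_1(M)$-natural: $g\cdot s_\xi = s_{g\xi}$ and $g V_\xi = V_{g\xi}$ with left and right endpoints exchanged consistently, so $T^*(g\cdot(s,\xi)) = g\cdot T^*(s,\xi)$. The last and most substantial step is the identity $U_\lambda(s) = I_\lambda(T(s))$ for every leaf $\lambda$ and every $s\in\Su_{\mathrm{left}}$; it suffices to check it on $s = s_\xi \in LS$, where $U_\lambda(s_\xi) = s_\xi(\lambda)$. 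I would argue by induction on the combinatorial distance in $\Lambda^*$ from the leaf $\nu$ (on which $\xi$ lives) to $\lambda$. The base case is $\lambda = \nu$, where $U_\nu(s_\xi) = \xi$ and $I_\nu(T^*(s_\xi,\xi)) = I_\nu$ evaluated at the chosen endpoint of $V_\xi$; since $V_\xi = I_\nu^{-1}(\xi)$ this is exactly $\xi$ by definition of $I_\nu$ on a gap. For the inductive step, crossing one edge of $\Lambda^*$ means either moving from a product region to an adjacent type-0 leaf $\mu$ — controlled by Lemma \ref{lem:U-gaps}, which says $c_\xi(\mu) = I_{\wt\Omega\mu}(\xi)$ and matches the compatibility $I_\mu = I_{\wt\Omega\mu}\circ I_{\lambda_t}$ — or from a type-0 leaf $\mu$ to an adjacent product region, controlled by Lemma \ref{lem:0-to-1} and Remark \ref{rmk}, which say that $s_\xi(\lambda_t)$ is the rightmost (resp.\ leftmost, on the negative side) endpoint of $I_{\wt\Omega\mu}^{-1}(s_\xi(\mu))$; one checks this agrees with what $I_{\lambda_t}\circ T$ produces precisely because of the left/right endpoint convention in the definition of $T^*$ and because $I_{\lambda_t}^{-1}$ of a point is exactly such a preimage interval. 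Finally, passing through the interior of a product region changes nothing by Corollary \ref{cor:vertical-section} ($s_\xi$ is vertical, $I_{\lambda_t}$ is independent of $t$ after the canonical identification).

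I expect the main obstacle to be the bookkeeping in this last inductive step: keeping the left/right (clockwise/counterclockwise) conventions consistent across the three distinct regimes — type-0 leaves, product-region interiors, and the non-Hausdorff branching where one turns around to reach incomparable leaves — and checking that at a branching leaf $\lambda_1$ (where $s_\xi$ is continuously extended from a product region on one side and then re-started by the rightmost-down rule on the other) the value still equals $I_{\lambda_1}(T(s_\xi))$. The resolution should be that Lemma \ref{lem:U-gaps} identifies $s_\xi(\lambda_1)$ as $I_{\wt\Omega\mu}$ of the common vertical value, which is forced regardless of which side one approaches from, so the branching introduces no new case beyond the two edge-types already handled. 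Once the identity $U_\lambda = I_\lambda\circ T$ is established, injectivity of $T$ is automatic: if $T(s) = T(s')$ then $U_\lambda(s) = U_\lambda(s')$ for all $\lambda$, and since the $U_\lambda$ jointly separate points of $\Su_{\mathrm{left}}$ (a leftmost section is determined by its values), $s = s'$; combined with monotonicity and surjectivity this gives the homeomorphism, and the identity itself is exactly the statement that $T$ is an isomorphism of universal circles.
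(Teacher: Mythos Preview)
Your overall strategy---tracking leftmost sections along paths in $\Lambda^*$ using Lemmas \ref{lem:U-gaps}, \ref{lem:0-to-1} and Corollary \ref{cor:vertical-section}---is the same as the paper's. But the paper organizes the argument around a single key containment that you have essentially rediscovered in your ``structure-map identity'' induction, without realizing it does the rest of the work for free. The paper proves first (Lemma \ref{lem:well-definedness}) that for any pointed section $(s,\xi_0)$ and \emph{any} leaf $\mu$, one has $T^*(s,\xi_0)\in V_{s(\mu)}$. This is exactly your induction: the equation $U_\mu(s_\xi)=I_\mu(T^*(s_\xi,\xi))$ is equivalent to $T^*(s_\xi,\xi)\in I_\mu^{-1}(s_\xi(\mu))=V_{s_\xi(\mu)}$. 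The paper tracks the interval $V_i:=V_{s(\lambda_i)}$ along the path and uses a stable/unstable dichotomy (Lemma \ref{lem:entering-exiting-gap}) to control precisely when $V_i$ can shrink---only when exiting an unstable gap going up (or stable gap going down), and then always to the endpoint singled out by your left/right convention; this is the bookkeeping you flag as the main obstacle, and it is where Lemma \ref{lem:disjoint-edges} is needed. Once the containment holds for all $\mu$, Corollary \ref{cor:intersection} shows $T^*(s,\xi_0)=\bigcap_\lambda V_{s(\lambda)}$ is a singleton depending only on $s$: well-definedness is then immediate, with no separate nesting argument needed. (Your sketch for well-definedness, as written, does not handle the case where $V_\xi$ and $V_{\xi'}$ are of opposite stability type, or one is a singleton.) Cyclic-order preservation also follows directly from $T(s)\in V_{s(\lambda)}$, without invoking $Q_\lambda^{-1}$, which is not a map.

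Two further gaps. Your density argument via Theorem \ref{thm:leaf-pocket} and Lemma \ref{lem:marker-closure} does not work: those results concern density of marker endpoints in $\partial_\infty\lambda$ and in $\mathcal L(\wt\UU^+_\mu)\subset\partial_\infty\wt L$, not in $\partial\OO$. The paper instead observes that $T(LS)$ is $\pi_1(M)$-invariant and non-empty, and invokes Fenley's minimality of the $\pi_1(M)$-action on $\partial\OO$. Second, your injectivity argument assumes the structure maps $U_\lambda$ jointly separate points of the \emph{completion} $\Su_{\mathrm{left}}$; this holds for $LS$ (a section is its values) but is not established for $\Su_{\mathrm{left}}$ in the paper or in \cite{Calegari:2003aa}. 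The paper sidesteps this by getting the homeomorphism directly from cyclic-order preservation, dense image, and the definition of $\Su_{\mathrm{left}}$ as the order completion of $LS$.
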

 
The rest of this section will be contributed to proving Theorem \ref{thm:homeo-of-circles}.

\begin{lem}\label{lem:well-definedness}
    Suppose $(s,\xi_0)$ is an element in $LS^*$. Let $\mu$ be any leaf of $\widetilde{\FF}$ and suppose $s(\mu)=\xi$. Then $T^*(s,\xi_0)\in V_\xi$.
\end{lem}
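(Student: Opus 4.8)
The plan is to show that the point $T^*(s,\xi_0)\in\partial\OO$, which was defined using the starting data $(s,\xi_0)$, is correctly ``seen'' by the value of $s$ at every leaf $\mu$; that is, it lies in the fiber $V_\xi=I_\mu^{-1}(s(\mu))$. The natural approach is induction on the combinatorial distance in the dual graph $\Lambda^*$ (or equivalently in $\Lambda$) between the leaf $\lambda$ on which $\xi_0$ lives and the target leaf $\mu$. This matches the way $s_{\xi_0}$ itself is built: its value at $\lambda$ is $\xi_0$, its values at comparable leaves are obtained by the leftmost-up / rightmost-down staircase procedure, and its values at incomparable leaves are obtained by branching out through the tree $\Lambda^*$ one step at a time. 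So it suffices to handle a single ``elementary move'' and then iterate.

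\textbf{Base case and reduction.} If $\mu=\lambda$ then $\xi=\xi_0$ and the statement is immediate from the definition of $T^*$, since $T^*(s,\xi_0)$ is chosen to be either $V_{\xi_0}$ itself, $\partial_l V_{\xi_0}$, or $\partial_r V_{\xi_0}$, all of which lie in $V_{\xi_0}$. For the inductive step I would split into the two types of elementary move along $\Lambda^*$: (a) moving within a single product region $\wt\Omega$ from one leaf $\lambda_{t}$ to another leaf $\lambda_{t'}$, and (b) crossing from a product region $\wt\Omega$ to an adjacent type-0 leaf $\mu$ (or vice versa). For move (a): by Corollary \ref{cor:vertical-section} the leftmost section $s$ is vertical on $\wt\Omega$, so $s(\lambda_t)=(\xi,t)$ for a fixed $\xi\in\partial_\infty\wt L$ independent of $t$; under the canonical trivialization the maps $I_{\lambda_t}$ restricted to $\overline{p(\lambda_t)}$ agree for all $t$ (all orbits crossing one $\lambda_t$ cross every other), so $V_{(\xi,t)}$ is literally the same subset of $\partial\OO$ for all $t$ in the interval, and there is nothing to prove. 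For move (b), crossing from $\wt\Omega$ to a positively adjacent type-0 leaf $\mu$: here I would use Lemma \ref{lem:U-gaps} and Lemma \ref{lem:0-to-1} together with the compatibility relation $I_\mu = I_{\wt\Omega\mu}\circ I_{\lambda_t}$ (equivalently $I_\mu=I_{\lambda\mu}\circ I_\lambda$ of Section \ref{sec:shadows}) to relate $V_\xi\subset\partial\OO$ computed from the $\mu$-side with $V_{\xi'}$ computed from the $\wt\Omega$-side. Concretely: if the direction of extension points from $\wt\Omega$ into $\mu$, then $s(\mu)=I_{\wt\Omega\mu}(\xi)$ by Lemma \ref{lem:U-gaps}, and one checks $I_\mu^{-1}(I_{\wt\Omega\mu}(\xi))\supseteq I_{\lambda_t}^{-1}(\xi)$ because $I_\mu=I_{\wt\Omega\mu}\circ I_{\lambda_t}$; if the direction points from $\mu$ into $\wt\Omega$, then $s(\lambda_t)$ is the appropriate extreme endpoint of $I_{\wt\Omega\mu}^{-1}(\eta)$ where $\eta=s(\mu)$, by Lemma \ref{lem:0-to-1} and Remark \ref{rmk}, and one verifies that this extreme endpoint's preimage under $I_{\lambda_t}$ is contained in $I_\mu^{-1}(\eta)$. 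In either sub-case the containment $T^*(s,\xi_0)\in V_\xi$ is preserved, completing the induction.

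\textbf{Main obstacle.} The delicate point is move (b) in the direction ``from $\mu$ into $\wt\Omega$'', i.e. when the section is being defined on $\wt\Omega$ using already-known values on $\mu$. There the leftmost / rightmost rules force $s(\lambda_t)$ to be a specific \emph{endpoint} of the interval $I_{\wt\Omega\mu}^{-1}(\eta)$ rather than an interior point, so one must check that the previously-accumulated point $T^*(s,\xi_0)\in\partial\OO$ — which lives inside $V_\eta = I_\mu^{-1}(\eta)$ by the inductive hypothesis — actually lands in the preimage under $I_{\lambda_t}$ of precisely that chosen endpoint, and not of a nearby point of $I_{\wt\Omega\mu}^{-1}(\eta)$. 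This requires tracking carefully how the gap $V_\eta$ of $I_\mu$ subdivides into gaps of $I_{\lambda_t}$ under the refinement $I_\mu = I_{\wt\Omega\mu}\circ I_{\lambda_t}$, and matching the ``leftmost/rightmost'' choice in the Calegari–Dunfield construction with the ``$\partial_l$ for unstable, $\partial_r$ for stable'' choice in the definition of $T^*$ — using that the sides of $p(\mu)$ adjacent to the relevant gap are stable on one end and unstable on the other (Lemma \ref{lem:nc-shadow}, Lemma \ref{lem:disjoint-edges}), so the two conventions are consistent. The no-perfect-fits hypothesis (via Lemma \ref{lem:stabilizer} and the fact that stable and unstable leaves share no ideal endpoint) is what makes these endpoint identifications unambiguous.
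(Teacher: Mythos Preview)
Your overall framework---induction along paths in $\Lambda^*$ with Lemmas \ref{lem:U-gaps} and \ref{lem:0-to-1} handling the elementary moves---matches the paper's approach. You also correctly identify the main obstacle: the ``from $\mu$ into $\wt\Omega$'' move, where the interval $V_i$ may strictly shrink.

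However, your resolution of this obstacle has a genuine gap. The inductive hypothesis ``$T^*(s,\xi_0)\in V_{\text{current}}$'' is too weak to survive a shrinkage step. Concretely: if $V_\eta=I_\mu^{-1}(\eta)$ is a nontrivial interval and $s(\lambda_t)$ is (say) its rightmost endpoint in $\partial_\infty\wt L$ via Lemma \ref{lem:0-to-1}, then $V_{s(\lambda_t)}$ can be a proper subset of $V_\eta$, and knowing only $T^*(s,\xi_0)\in V_\eta$ does not tell you $T^*(s,\xi_0)\in V_{s(\lambda_t)}$. Your proposed fix---``the sides of $p(\mu)$ adjacent to the relevant gap are stable on one end and unstable on the other''---is not the right fact and does not close this gap.

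The paper's resolution is different and more global. It proves a sub-lemma (Lemma \ref{lem:entering-exiting-gap}): along an upward path, if shrinkage occurs from a type-0 value $V_{2i+1}$ to a type-1 value $V_{2i+2}$, then $V_{2i+1}$ must be an \emph{unstable} gap and $V_{2i+2}$ is exactly the singleton $\partial_l V_{2i+1}$. Combined with the observation that \emph{before} any change the intervals $V_i$ are all equal to $V_0$, this means the first shrinkage (if any) takes $V_0$ to $\partial_l V_0$---which is precisely $T^*(s,\xi_0)$ by the definition of $T^*$ on unstable gaps. After that the sequence is monotone increasing. So the paper does not run a naive step-by-step induction; it tracks the entire sequence $V_0,V_1,\ldots$ and uses that the first change determines everything. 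To repair your argument, you would need either to prove this sub-lemma and run the same global case analysis, or to strengthen your inductive hypothesis to record not just containment but the precise position of $T^*(s,\xi_0)$ within $V_i$ (e.g.\ ``if $V_i$ is an unstable gap then $T^*(s,\xi_0)=\partial_l V_i$'').
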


\begin{proof}
    By definition we have $s=s_{\xi_0}$ is the leftmost section starting from $\xi_0$, where $\xi_0\in\partial_\infty\lambda_0$ for some $\lambda_0\in\Lambda$. We assume that $\lambda_0$ is a type-1 leaf for simplicity. The case when $\lambda_0$ is type-0 is basically the same.

    We first consider the case when $\lambda_0$ and $\mu$ are comparable and $\lambda_0<\mu$. Take a sequence of $\wt{\FF}$-leaves
    \[
    \lambda_0\to\lambda_1\to\lambda_2\to\cdots\to\lambda_n=\mu
    \]
where $\lambda_{2i+1}$ is a type-0 leaf, $\lambda_{2i}$ is a type-1 leaf and $\lambda_{i}\lesssim\lambda_{i+1}$. The sequence represents the shortest path from $\lambda_0$ to $\mu$ in $\Lambda^*$, viewing a type-1 leaf as the vertex representing the corresponding product region. We record the value of $s$ along this sequence by $s_i=s(\lambda_i)$ and let $V_i=V_{s(\lambda_i)}$. We have a sequence of closed intervals (possibly with length zero) $V_i\subset\partial\OO$. The goal is to show that for all $\lambda_i$, $T^*(s,\xi_0)\in V_i$. In particular, this will implies $T^*(s,\xi_0)\in V_n=V_\xi$. We will show this by tracking how the intervals $V_i$ vary along the sequence $\lambda_i$. By Lemma \ref{lem:U-gaps}, we have $V_{2i}\subset V_{2i+1}$; by Lemma \ref{lem:0-to-1}, we have $V_{2i+1}\subset V_{2i+2}$.

\begin{lem}\label{lem:entering-exiting-gap}
If $V_{2i}\subsetneq V_{2i+1}$, then $V_{2i+1}$ is a stable gap. If $V_{2i+2}\subsetneq V_{2i+1}$, then $V_{2i+1}$ is an unstable gap and $V_{2i+2}=\partial_l V_{2i+1}$.
\end{lem}

\begin{proof}
First, suppose $V_{2i}\subsetneq V_{2i+1}$. The leaf $e:=Q_{\lambda_{2i+1}}^{-1}\left(I_{\lambda_{2i+1}}(V_{2i+1})\right)$ is a side of $p(\lambda_{2i+1})$ containing in the interior of $p(\lambda_{2i})$. The fixed point $x_e$ in $e$ under $\mathrm{Stab}(e)$ corresponds to a periodic $\wt{\phi}$-orbit $\gamma$ in $\wt{M}$ intersecting $\lambda_{2i}$ but not intersecting $\lambda_{2i+1}$. Since every $\wt{\phi}$-orbit in $\wt{\FF}^s(\phi)$ is forward asymptotic to $\gamma$, we see that $\FF^s_\OO(x)$ is not contained in $p(\lambda_{2i+1})$. This means $e=\FF_\OO^s(x)$ and $V_{2i+1}$ is a stable gap.

Now suppose $V_{2i+1}\subsetneq V_{2i+2}$. A similar argument as above shows $V_{2i+1}$ is an unstable gap. By Lemma \ref{lem:0-to-1} and Remark \ref{rmk}, if $V_{2i+2}$ is a closed interval of positive length, there will be a boundary leaf $e$ of $p(\lambda_{2i+1})$ and a boundary leaf $e'$ of $p(\lambda_{2i+2})$ different from $e$ and sharing the leftmost endpoint $\chi$ with $e$. But this contradicts Lemma \ref{lem:disjoint-edges}. Therefore, $V_{2i+2}$ is a single point, and it is the leftmost endpoint of $V_{2i+1}$ by Lemma \ref{lem:0-to-1}.
\end{proof}

We continue the proof that $T^*(s,\xi_0)\in V_i$ for all $0\leq i\leq n$. It is obvious that $T^*(s,\xi_0)\in V_0$ by the definition of $T$. If $V_0$ is a single point or a stable gap of $\lambda_0$, by Lemma \ref{lem:entering-exiting-gap} we have $V_i\subset V_{i+1}$ for all $i$. So we have $T^*(s,\xi_0)\in V_i$.

If $V_0$ is an unstable gap, then there are two cases. If for all $0\leq i\leq n$ we have
$V_i=V_0$
then there is nothing to prove. If this is not the case, let $N$ be the first positive integer so that $V_N\neq V_0$. If $V_0\subsetneq V_N$, then $V_N$ is a stable gap by Lemma  \ref{lem:entering-exiting-gap}, and we again have $V_i\subset V_{i+1}$ for all $i\geq N-1$. If $V_N\subsetneq V_0$, then $V_N=T^*(s,\xi_0)=\partial_l V_0$ by definition and Lemma \ref{lem:entering-exiting-gap}. We use Lemma \ref{lem:entering-exiting-gap} again to see that $\{V_i\}_{i\geq N}$ is a monotone increasing sequence of closed intervals. In any case, we have $T^*(s,\xi_0)\in V_i$ for all $i$.

The case when $\mu<\lambda_0$ can be proved using a similar argument. Thus the lemma is proved for $\mu$ comparable to $\lambda_0$.

    Now suppose $\mu$ is not comparable to $\lambda_0$. We again consider the shortest path from $\lambda_0$ to $\mu$ in $\Lambda$, and track how $V_i$ changes along the path. To illustrate the idea, we consider the following example. Suppose the shortest path from $\lambda$ to $\mu$ in $\Lambda^*$ is of length five:
    \[
    \lambda_0\to\lambda_1\to\cdots\to\lambda_4=\mu
    \]
where $\lambda_0,\lambda_2,\lambda_4$ are type-1 leaves, $\lambda_1,\lambda_3$ are type-0 leaves, and they satisfy $\lambda_0\lesssim\lambda_1\lesssim\lambda_2$ and $\lambda_4\lesssim\lambda_3\lesssim\lambda_2$. Let $V_{i}=V_{s(\lambda_i)}$. We made a turn at $\lambda_2$ from the positive flow direction to the negative direction. Our previous discussion shows that $T^*(s,\xi_0)\subset V_i$ for $i=0,1,2$. Since $\lambda_1$ and $\lambda_3$ are incomparable, the core of $I_{\lambda_1}$ is contained in a single unstable gap $G$ of $I_{\lambda_3}$. The interval $V_3$ must be the gap $G$ containing $V_2$ by Lemma \ref{lem:U-gaps}, Lemma \ref{lem:0-to-1} and the definition of $T$. In particular, we have $T^*(s,\xi_0)\subset V_3$. Since $\lambda_3$ is negatively adjacent to $\lambda_2$ and $V_2\subsetneq V_3$, a similar argument to the proof of Lemma \ref{lem:entering-exiting-gap} shows that $V_3$ is an unstable gap, and an unstable gap will only become larger as we track $V_i$ backwards. Therefore, we have $T^*(s,\xi_0)\in V_3\subset V_4$.

In general, the path from $\lambda$ to $\mu$ has a finite number of turns. If we track the interval $V_i$ along the path, at a turn from the positive direction to the negative direction, $V_i$ will become an unstable gap and can only grow larger until the next turn happens. Similarly, if we turn from the negative direction to the positive direction, $V_i$ will become a stable gap and can only grow larger until the next turn happens. Hence, $V_i$ will be non-decreasing after we made the first turn. But we have show that $T^*(s,\xi_0)$ is in $V_i$ before we made any turn in the first part of the proof. So the proof is complete.
\end{proof}

\begin{cor}\label{cor:intersection}
    Let $(s,\xi_0)\in LS^*$ be a pointed leftmost section. Then we have $T^*(s,\xi_0)=\bigcap_{\lambda\in\Lambda}V_{s(\lambda)}$.
\end{cor}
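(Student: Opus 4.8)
The plan is to establish the equality as two inclusions. The inclusion $T^*(s,\xi_0)\in\bigcap_{\lambda\in\Lambda}V_{s(\lambda)}$ is exactly Lemma \ref{lem:well-definedness}: for every leaf $\mu$ one has $s(\mu)=\xi$ and $T^*(s,\xi_0)\in V_\xi=V_{s(\mu)}$. All the work is in the reverse inclusion, namely in showing that $\bigcap_{\lambda}V_{s(\lambda)}$ is the \emph{single} point $T^*(s,\xi_0)$.

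I would argue by contradiction. Suppose the intersection contains a point $\zeta\neq T^*(s,\xi_0)$. Since each $V_{s(\lambda)}=I_\lambda^{-1}(s(\lambda))$ is a closed \emph{proper} sub-arc of $\partial\OO$ (proper because $I_\lambda$ is monotone onto $\partial_\infty\lambda$, which has more than one point) containing the two distinct points $T^*(s,\xi_0)$ and $\zeta$, it contains one of the two closed arcs bounded by them. The first step is to see that it is always the \emph{same} arc $J$ as $\lambda$ varies: if not, two sides of shadows would carry the two complementary arcs separately, and then by Lemma \ref{lem:disjoint-edges} (and the no-perfect-fits hypothesis, via Lemma \ref{lem:stabilizer}) these two sides would have to coincide and bound, on their two sides, shadows whose $I_\lambda$-gaps are the two complementary arcs --- a configuration one rules out. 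Thus there is a nondegenerate closed arc $J$ with $J\subseteq V_{s(\lambda)}$ for every $\lambda$. Then, for each $\lambda$, the preimage $V_{s(\lambda)}$ contains the nondegenerate arc $J$, so $I_\lambda$ collapses a nondegenerate interval, so $V_{s(\lambda)}$, being the full preimage of $s(\lambda)$, is a closed gap $\overline{V_{e_\lambda}}$ of $I_\lambda$ for some side $e_\lambda$ of $p(\lambda)$ --- here one uses the description of $I_\lambda$ built from the monotone maps $Q_\lambda$ of Lemma \ref{lem:c-continuous-extension} and Lemma \ref{lem:nc-continuous-extension}, whose cores are exactly $\overline{p(\lambda)}\cap\partial\OO$, together with the fact that a monotone circle map does not collapse a nondegenerate sub-arc of its core. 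The interior of $\overline{V_{e_\lambda}}$ is disjoint from $\overline{p(\lambda)}$, and any $\theta$ in the interior of $J$ has a neighbourhood contained in $J\subseteq\overline{V_{e_\lambda}}$, so we conclude $\theta\notin\overline{p(\lambda)}$ for \emph{every} leaf $\lambda$ of $\wt{\FF}$.

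This contradicts minimality of the $\pi_1(M)$-action on $\partial\OO$: the interior of $J$ is a nonempty open subset of $\partial\OO$, so its $\pi_1(M)$-saturation is all of $\partial\OO$; on the other hand, for any fixed leaf $\lambda_*$ the set $\overline{p(\lambda_*)}\cap\partial\OO$ is nonempty --- it is the core of the monotone surjection $Q_{\lambda_*}\colon\partial p(\lambda_*)\to\partial_\infty\lambda_*$ of Lemma \ref{lem:c-continuous-extension} (resp.\ Lemma \ref{lem:nc-continuous-extension}) --- so we may write some $\eta\in\overline{p(\lambda_*)}\cap\partial\OO$ as $\eta=\gamma\theta'$ with $\gamma\in\pi_1(M)$ and $\theta'$ in the interior of $J$, whence $\theta'=\gamma^{-1}\eta\in\overline{p(\gamma^{-1}\lambda_*)}\cap\partial\OO$ by equivariance of shadows, contradicting the previous paragraph applied to the leaf $\gamma^{-1}\lambda_*$. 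Therefore no such $\zeta$ exists and the corollary follows. The step I expect to be the main obstacle is the ``coherence of $J$'' in the second paragraph --- ruling out that the two complementary arcs between $T^*(s,\xi_0)$ and $\zeta$ are realized by different families of leaves; everything after that is the soft minimality argument above, and everything before it is Lemma \ref{lem:well-definedness}.
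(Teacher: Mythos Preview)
Your overall strategy is different from the paper's and the final minimality paragraph is fine, but the ``coherence of $J$'' step --- which you correctly flag as the crux --- has a real gap. Lemma \ref{lem:disjoint-edges} only compares a side of $p(\lambda)$ with a side of $p(\mu)$ when $\lambda$ and $\mu$ are \emph{adjacent}; it says nothing about two arbitrary leaves $\lambda_1,\lambda_2$. More seriously, nothing prevents one side $e_{\lambda_1}\subset\FF_\OO^s$ and another $e_{\lambda_2}\subset\FF_\OO^u$ from crossing once in $\OO$, in which case their endpoint pairs on $\partial\OO$ link and the two gaps carry the two complementary arcs between $T^*(s,\xi_0)$ and $\zeta$. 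So your proposed justification via Lemma \ref{lem:disjoint-edges} and Lemma \ref{lem:stabilizer} does not rule this out.

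The natural way to rescue the step is to track $V_i=V_{s(\lambda_i)}$ along paths in $\Lambda^*$ starting from $\lambda_0$, exactly as in the proof of Lemma \ref{lem:well-definedness}: by Lemma \ref{lem:entering-exiting-gap}, the \emph{only} way the sequence $\{V_i\}$ can strictly decrease is by collapsing to the singleton $\{T^*(s,\xi_0)\}$. Hence, under your contradiction hypothesis (no $V_{s(\lambda)}$ is a singleton), one gets $V_{\xi_0}\subseteq V_{s(\lambda)}$ for every $\lambda$, and $J:=V_{\xi_0}$ gives your coherent arc. But note that once you have this mechanism in hand, the minimality argument is no longer needed: this is precisely the paper's route. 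The paper simply exhibits a leaf $\lambda$ for which $V_{\xi_0}\subseteq V_{s(\lambda)}$ must fail --- namely any leaf intersected by the periodic orbit $p^{-1}(x_e)$, where $x_e$ is the periodic point on the side $e$ facing $V_{\xi_0}$ --- and then Lemma \ref{lem:entering-exiting-gap} forces some $V_i$ along the path to $\lambda$ to be a singleton. That one-line construction replaces both your coherence step and your minimality step.
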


\begin{proof}
Lemma \ref{lem:well-definedness} already shows that $T^*(s,\xi_0)\in\bigcap_{\lambda\in\wt{\FF}}V_{s(\lambda)}$, so it suffices to prove that there is some $\lambda$ with $V_{s(\lambda)}$ being a singleton. Suppose $\xi_0$ is at the infinity of the leaf $\lambda_0$. If $V_{\xi_0}$ is a single point, it is trivial. If $V_{\xi_0}$ is a nontrivial closed interval, let $e$ be the side of $p(\lambda_0)$ facing $V_{\xi_0}$ and let $x_e$ be the periodic point in $e$ as in Proposition \ref{prop:local-dynamics}. The periodic orbit $p^{-1}(x_e)$ intersects some leaf $\lambda$ comparable to $\lambda_0$. If we take a path in $\Lambda^*$ from $\lambda_0$ to $\lambda$ and record the intervals $V_i$ as in the proof of Lemma \ref{lem:well-definedness}, there must be some $i$ so that $V_{i+1}\subsetneq V_i$. Otherwise we have $V_0\subset V_1\subset\cdots$ and so $I_\lambda$ has a gap containing $V_{\xi_0}$, but that contradicts $x_e\in p(\lambda)$. By Lemma \ref{lem:entering-exiting-gap}, $V_{i+1}$ is a single point, so is the intersection $\bigcap_{\lambda\in\wt{\FF}}V_{s(\lambda)}$. The lemma is proved.
\end{proof}

\begin{cor}\label{cor:pointed-descend}
There is a map $T:LS\to\partial\OO$ so that the following diagram commutes:
\[
\xymatrix{
LS^*\ar[dr]_{T^*}\ar[r]^{\pi} & LS\ar[d]^{T}\\
 & \partial\OO
}
\]
\end{cor}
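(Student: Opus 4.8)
The plan is to deduce everything from Corollary \ref{cor:intersection}, which already does the real work. First I would observe that the forgetful map $\pi:LS^*\to LS$ is surjective: by construction every leftmost section $s\in LS$ is of the form $s_\xi$ for some $\xi\in E_\infty$, so $(s,\xi)\in LS^*$ and $\pi(s,\xi)=s$. Thus to produce $T:LS\to\partial\OO$ with $T\circ\pi=T^*$ it suffices to check that $T^*$ is constant on the fibers of $\pi$, i.e. that $T^*(s,\xi_0)=T^*(s,\xi_0')$ whenever $(s,\xi_0)$ and $(s,\xi_0')$ are pointed leftmost sections with the same underlying section $s$.

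The key step is that Corollary \ref{cor:intersection} gives, for any $(s,\xi_0)\in LS^*$,
\[
T^*(s,\xi_0)=\bigcap_{\lambda\in\Lambda}V_{s(\lambda)},
\]
and the right-hand side depends only on the section $s$ (through its values $s(\lambda)\in\partial_\infty\lambda$), not on the chosen starting point $\xi_0$. Hence if $(s,\xi_0)$ and $(s,\xi_0')$ lie in the same fiber of $\pi$, then $T^*(s,\xi_0)=\bigcap_{\lambda}V_{s(\lambda)}=T^*(s,\xi_0')$. Therefore $T^*$ factors uniquely through $\pi$, defining $T:LS\to\partial\OO$ by $T(s):=T^*(s,\xi)$ for any choice of $\xi$ with $s_\xi=s$; by construction $T\circ\pi=T^*$, which is exactly the commutativity of the stated diagram.

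There is essentially no obstacle here: the content has been front-loaded into Lemma \ref{lem:well-definedness} and Corollary \ref{cor:intersection}. The only minor point to be careful about is confirming that the intersection formula of Corollary \ref{cor:intersection} is genuinely a single point (so that $T(s)$ is a well-defined point of $\partial\OO$ rather than a degenerate interval), but this is precisely the assertion proved in Corollary \ref{cor:intersection} via Lemma \ref{lem:entering-exiting-gap}. Injectivity, order-preservation, continuity of the extension to $\Su_\mathrm{left}$, and $\pi_1(M)$-equivariance are not needed at this stage and will be addressed in the subsequent steps toward Theorem \ref{thm:homeo-of-circles}.
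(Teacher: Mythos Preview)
Your proposal is correct and follows essentially the same approach as the paper: both define $T(s)=T^*(s,\xi)$ for any starting point $\xi$ and invoke Corollary~\ref{cor:intersection} to see that the right-hand side $\bigcap_\lambda V_{s(\lambda)}$ depends only on $s$, so the choice of $\xi$ is immaterial. Your explicit remark that $\pi$ is surjective is a harmless elaboration the paper leaves implicit.
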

\begin{proof}
For any $s\in LS$, pick a starting point $\xi$ for $s$ and define $T(s)=T^*(s,\xi)$. By Corollary \ref{cor:intersection}, we have $T^*(s,\xi)=T^*(s,\xi')$ for any $(s,\xi),(s,\xi')\in LS^*$. Therefore, the map $T$ is well-defined.
\end{proof}

\begin{cor}\label{cor:cyclic-order-preserving}
    The map $T$ preserves the cyclic order of elements in $LS$.
\end{cor}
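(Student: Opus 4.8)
The plan is to reduce the claim to a statement about three pointed leftmost sections restricted to a single well-chosen line in $\Lambda$, where everything becomes an order comparison on a cylinder. First I would recall from \cite[Lemma 6.25]{Calegari:2003aa} that for any three distinct sections $s^1, s^2, s^3 \in LS$ there is an embedded line $\ell \subset \Lambda$ such that the restrictions $s^j|_\ell$ are pairwise disjoint embedded transversals of the cylinder $E_\infty|_\ell$, and their cyclic order there computes the cyclic order in $\Su_\mathrm{left}$. So it suffices to show: if $s^1, s^2, s^3$ appear in this (say, counterclockwise) cyclic order along $E_\infty|_\ell$, then $T(s^1), T(s^2), T(s^3)$ appear in counterclockwise order on $\partial\OO$.

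Next I would pick starting points. For each $j$, by Corollary \ref{cor:intersection} we have $T(s^j) = \bigcap_{\lambda} V_{s^j(\lambda)}$, and that intersection is achieved at some leaf — in fact we may choose the leaf to lie on $\ell$, since the argument in Corollary \ref{cor:intersection} produces a leaf comparable to the starting leaf, and by re-choosing the starting point of $s^j$ on $\ell$ we can assume the relevant periodic orbit hits a leaf of $\ell$. Now fix a single leaf $\lambda \in \ell$ and compare. The structure map $I_\lambda : \partial\OO \to \partial_\infty\lambda$ is monotone, and by construction $T(s^j) \in V_{s^j(\lambda)} = I_\lambda^{-1}(s^j(\lambda))$. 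If the three values $s^1(\lambda), s^2(\lambda), s^3(\lambda)$ are pairwise distinct, then since $I_\lambda$ is monotone and the cyclic order of the $s^j(\lambda) = U_\lambda(s^j)$ on $\partial_\infty\lambda$ agrees with the cyclic order of the $s^j$ (monotonicity of $U_\lambda$ together with the fact that leftmost sections don't cross on $E_\infty|_\ell$), the points $T(s^j)$, lying in the correspondingly ordered gaps $V_{s^j(\lambda)}$, inherit the same cyclic order. The remaining case is when two of the values coincide at every leaf of $\ell$, say $s^1(\lambda) = s^2(\lambda)$ for all $\lambda \in \ell$; then on the cylinder $E_\infty|_\ell$ the transversals $s^1|_\ell$ and $s^2|_\ell$ coincide, contradicting that $s^1 \ne s^2$ have disjoint images on this particular $\ell$ — so this case does not occur, or can be sidestepped by choosing $\ell$ along which all three are distinct somewhere.

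The subtle point — and what I expect to be the main obstacle — is handling the case where the sections agree at many leaves but separate only far out in $\Lambda$, together with the bookkeeping of which \emph{endpoint} of a gap $T^*$ selects (leftmost for unstable gaps, rightmost for stable gaps). One must check that when $V_{s^1(\lambda)} = V_{s^2(\lambda)}$ is a common nontrivial gap at the leaf $\lambda$ where, say, $T(s^1)$ is pinned down, the two sections $s^1$ and $s^2$ must have exited this gap on opposite sides further along $\ell$ (one following the leftmost-up rule, the other the rightmost-down rule relative to the branching), so that the endpoint conventions in the definition of $T^*$ place $T(s^1)$ and $T(s^2)$ at the correct two endpoints of the gap in the order consistent with the cyclic order of $s^1, s^2$ on $E_\infty|_\ell$. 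This is exactly the content encoded by Lemma \ref{lem:entering-exiting-gap} and the leftmost/rightmost extension rules of Lemma \ref{lem:U-gaps} and Lemma \ref{lem:0-to-1}: tracking $V_i$ along $\ell$ shows that a section crossing into a stable gap lands at its rightmost endpoint and into an unstable gap at its leftmost endpoint, which is precisely the side $T^*$ records. Assembling these observations — monotonicity of $I_\lambda$ for the generic comparison, and the entering/exiting-gap analysis for the degenerate comparison — gives that $T$ preserves cyclic order on all of $LS$, completing the proof.
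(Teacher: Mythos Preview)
Your core plan is the paper's proof: reduce to section values along an embedded line in $\Lambda$ via \cite[Lemma~6.25]{Calegari:2003aa}, then use Lemma~\ref{lem:well-definedness} (i.e.\ $T(s)\in V_{s(\lambda)}$ for every $\lambda$, equivalently $I_\lambda\circ T=U_\lambda$ on $LS$) together with monotonicity of $I_\lambda$ to conclude. The paper's argument is literally one sentence to this effect, so your first two paragraphs are a faithful unpacking.

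Your final ``subtle point'' paragraph, however, is both unnecessary and partly misdescribed. The sentence ``$V_{s^1(\lambda)}=V_{s^2(\lambda)}$ is a common nontrivial gap at the leaf $\lambda$ where $T(s^1)$ is pinned down'' is self-contradictory: being pinned down (in the sense of Corollary~\ref{cor:intersection}) means $V_{s^1(\lambda)}$ is a singleton. More importantly, the left/right endpoint convention in the definition of $T^*$ refers to the gap at the \emph{starting} leaf of the pointed section, not at an arbitrary comparison leaf $\lambda$, so the mechanism you sketch for placing $T(s^1)$ and $T(s^2)$ at opposite endpoints of a shared gap does not operate as you describe. You do not need to re-invoke Lemma~\ref{lem:entering-exiting-gap} or Lemmas~\ref{lem:U-gaps}--\ref{lem:0-to-1} here; that machinery already went into establishing Lemma~\ref{lem:well-definedness}, which is the only input the paper uses. (Note also that your claim that the restrictions $s^j|_\ell$ are ``pairwise disjoint'' is stronger than what \cite[Lemma~6.25]{Calegari:2003aa} provides---the sections may touch along $\ell$ while still having a well-defined cyclic order---so you cannot dismiss the degenerate case quite that quickly; but Lemma~\ref{lem:well-definedness} applied at \emph{all} leaves of $\Lambda$, not just those on $\ell$, is what resolves it.)
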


\begin{proof}
    The cyclic order of leftmost sections are determined by the cyclic order of their values on embedded lines in the leaf space (\cite[Lemma 6.25]{Calegari:2003aa}, see also Section \ref{sec:universal-circle}). Their images under $T$ must follow the same cyclic order by Lemma \ref{lem:well-definedness}.
\end{proof}

\begin{lem}\label{lem:dense-image}
    The image of $LS$ under $T$ is dense.
\end{lem}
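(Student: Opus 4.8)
The plan is to show that $T(LS)$ is dense in $\partial\OO$ by producing, near any target point of $\partial\OO$, the image under $T$ of a leftmost section. The natural points to aim for are the endpoints of sides of shadows, since these are the points where we have the most direct control: by Lemma \ref{lem:density-of-edges} and Lemma \ref{lem:nc-density-of-edges} the endpoints of sides of $p(\lambda)$ are dense in $\partial p(\lambda)\cap\partial\OO$, and it will be enough (using minimality of the $\pi_1(M)$-action on $\partial\OO$, which follows from the atoroidal hypothesis via \cite{Fenley2005IdealBO}) to hit a dense set of such endpoints for a single leaf $\lambda$. So fix a type-0 leaf $\lambda$ and let $\xi\in\partial_\infty\lambda$ be a \emph{marker endpoint} — these are dense in $\partial_\infty\lambda$ by Thurston's Leaf Pocket Theorem (Theorem \ref{thm:leaf-pocket}). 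It suffices to show that $T(s)$, for $s$ the leftmost section through $(\lambda,\xi)$, lies in $V_\xi=I_\lambda^{-1}(\xi)$, and that these sets $V_\xi$ as $\xi$ ranges over a dense subset of $\partial_\infty\lambda$ have endpoints dense in $\partial p(\lambda)\cap\partial\OO$; the first point is exactly Corollary \ref{cor:intersection} (which gives $T(s)=\bigcap_{\nu}V_{s(\nu)}\subset V_\xi$), and the second is immediate since $Q_\lambda$ is monotone with core $\overline{p(\lambda)}\cap\partial\OO$ by Lemma \ref{lem:c-continuous-extension}, so preimages of a dense subset of $\partial_\infty\lambda$ have union dense in the core.

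More carefully, I would argue as follows. Let $J\subset\partial\OO$ be any nonempty open arc; I want to find $s\in LS$ with $T(s)\in J$. By minimality of the $\pi_1(M)$-action on $\partial\OO$, some translate $gJ$ meets $\overline{p(\lambda)}\cap\partial\OO$ in a nonempty relatively open set, and since $T$ is $\pi_1(M)$-equivariant (which is part of what Theorem \ref{thm:homeo-of-circles} asserts, proved using equivariance of $T^*$, $I_\lambda$ and the leftmost-section construction) it is enough to find $s$ with $T(s)\in gJ$; so without loss of generality $J\cap(\overline{p(\lambda)}\cap\partial\OO)$ contains a nonempty open arc $J'$ of the core of $Q_\lambda$. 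Since $Q_\lambda|_{\partial p(\lambda)}$ is monotone onto $\partial_\infty\lambda$ with core $\overline{p(\lambda)}\cap\partial\OO$, the image $Q_\lambda(J')$ has nonempty interior in $\partial_\infty\lambda$, so it contains a marker endpoint $\xi$; moreover $V_\xi=Q_\lambda^{-1}(\xi)$ is either a point of $J'$ or a closed gap whose intersection with the core is a single point of $J'$, so in either case $V_\xi$ (hence both $\partial_l V_\xi$ and $\partial_r V_\xi$) meets $\overline{J'}\subset J$ — here one uses that a gap of $Q_\lambda$ has at most its two endpoints in the core, which follows from nowhere-density of $\partial p(\lambda)\cap\partial\OO$. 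Let $s=s_\xi$ be the leftmost section through the marker endpoint $\xi\in\partial_\infty\lambda$. By Corollary \ref{cor:intersection}, $T(s)=T^*(s,\xi)\in V_{s(\lambda)}=V_\xi$, and by the definition of $T^*$ (it outputs an endpoint of $V_\xi$ when $V_\xi$ is a gap, and $V_\xi$ itself when it is a point), $T(s)\in\{\partial_lV_\xi,\partial_rV_\xi\}\cup V_\xi\subset J$. This proves density.

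The one genuine subtlety — and the step I'd expect to require the most care — is matching up which endpoint of the gap $V_\xi$ the map $T^*$ actually selects with the requirement that this endpoint land in the core of $Q_\lambda$. When $V_\xi$ is a nontrivial gap (a stable or unstable gap of $\lambda$), $T^*(s,\xi)$ is $\partial_rV_\xi$ or $\partial_lV_\xi$ according to the type of the gap, and a priori one of the two endpoints of a gap could fail to be in $\overline{p(\lambda)}\cap\partial\OO$; but in fact both endpoints of a side $e$ of $p(\lambda)$ lie in $\overline{p(\lambda)}\cap\partial\OO$ by definition of $\partial p(\lambda)$ in $\overline\OO$, and $V_\xi$ is exactly the arc cut off by such a side $e=Q_\lambda^{-1}(\xi)$, so $\partial_lV_\xi,\partial_rV_\xi\in\partial e\subset\overline{p(\lambda)}\cap\partial\OO$ are endpoints of $e$, hence genuinely in the core and, by the construction above, in $J$. (Alternatively one can sidestep the bookkeeping entirely: choosing $\xi$ among marker endpoints for which $V_\xi$ is a single point already suffices for density, provided such $\xi$ are dense — which they are, since the set of $\xi$ with $V_\xi$ a nontrivial gap is countable while marker endpoints are dense and $\partial_\infty\lambda$ is uncountable.) Either route completes the proof; I would present the second, shorter one as the main line and remark on the first.
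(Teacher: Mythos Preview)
Your proposal reaches the right conclusion but takes a far longer route than necessary, and both of your suggested routes have gaps as written.

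The paper's proof is two sentences: $T$ is $\pi_1(M)$-equivariant (which you yourself note), so $T(LS)$ is a nonempty $\pi_1(M)$-invariant subset of $\partial\OO$; density follows immediately from minimality of the $\pi_1(M)$-action on $\partial\OO$. You invoke exactly this minimality, but only to translate $J$ so that it meets $\mathrm{core}(I_\lambda)$ for a fixed $\lambda$, and then work to produce a specific $T(s)$ inside $J$. That extra work is unnecessary: once you have exhibited a single element of $T(LS)$ --- and any $T(s_\xi)$ will do --- equivariance and minimality finish the proof.

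Both of your routes, as written, have genuine gaps. In the second (shorter) route you argue that marker endpoints with $V_\xi$ a singleton are dense because the set of $\xi$ with $V_\xi$ a nontrivial gap is countable; but a dense set minus a countable set need not be dense (take $\Q\setminus\Q$ in $\R$), and the Leaf Pocket Theorem only gives density of marker endpoints, not uncountability in every arc. In the first route you assert that ``$V_\xi$ (hence both $\partial_l V_\xi$ and $\partial_r V_\xi$) meets $\overline{J'}\subset J$'', but your construction only places \emph{one} endpoint of $V_\xi$ in $J'$ (the core point witnessing $\xi\in Q_\lambda(J')$), whereas $T^*(s,\xi)$ selects a \emph{specific} endpoint according to whether the gap is stable or unstable; you have not shown this is the one lying in $J$. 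Also, $\overline{J'}\subset J$ is not justified (the closure of $J\cap\mathrm{core}$ can meet $\partial J$), and $J'$ is not an ``open arc'' since the core is nowhere dense.
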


\begin{proof}
    It is clear that $T$ is $\pi_1(M)$-equivariant, so $T(LS)$ is a $\pi_1(M)$-invariant subset of $\partial\OO$. The lemma follows from the minimality of the $\pi_1(M)$-action on $\partial\OO$ \cite{Fenley2009}.
\end{proof}

\begin{lem}\label{lem:continuous-extension}
    The map $T:LS\to\partial\OO$ extends continuously to a homeomorphism $T:\Su_\mathrm{left}\to\partial\OO$.
\end{lem}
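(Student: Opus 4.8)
The plan is to upgrade the order-preserving injection $T\colon LS\to\partial\OO$ to a homeomorphism $\Su_{\mathrm{left}}\to\partial\OO$ using the general principle that a cyclic-order-preserving map from a dense subset of a circle to a dense subset of a circle extends uniquely to a continuous surjection, and that such an extension is a homeomorphism when the original map is injective. First I would observe that $\Su_{\mathrm{left}}$ is by definition the order-completion of $LS$, so $LS$ is dense in $\Su_{\mathrm{left}}$; by Lemma \ref{lem:dense-image}, $T(LS)$ is dense in $\partial\OO$. Since $T$ preserves cyclic order (Corollary \ref{cor:cyclic-order-preserving}), for any point $\sigma\in\Su_{\mathrm{left}}$ one approximates $\sigma$ from the left and from the right by sequences in $LS$; their $T$-images are monotone sequences in the circle $\partial\OO$, hence converge. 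I would define $T(\sigma)$ to be this common limit, checking it is independent of the approximating sequences precisely because $T(LS)$ is dense (the ``left limit'' and ``right limit'' cannot be separated by a complementary gap, since such a gap would contain points of $T(LS)$, contradicting that points of $LS$ accumulate to $\sigma$ from both sides).

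Next I would verify that the extension $T\colon\Su_{\mathrm{left}}\to\partial\OO$ is continuous: this is automatic for a monotone (cyclic-order-preserving) map between circles whose image is dense, since a monotone map with dense image has no jump discontinuities. Surjectivity then follows from density of $T(LS)$ together with the fact that $\partial\OO$ is compact and $T(\Su_{\mathrm{left}})$ is closed (being the continuous image of a compact set) and dense. For injectivity, I would argue that if $T(\sigma_1)=T(\sigma_2)$ with $\sigma_1\neq\sigma_2$, then the whole arc of $\Su_{\mathrm{left}}$ between them (on the side not containing most of $LS$) would be collapsed to a point; but this arc contains points of $LS$, on which $T$ is already injective (it extends the injective $T^*$ via Corollary \ref{cor:intersection} and the explicit formula for $T^*$). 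Actually I would want to be slightly careful here: I expect the cleanest route is to use Corollary \ref{cor:intersection}, which identifies $T(s)=\bigcap_{\lambda}V_{s(\lambda)}$, to show directly that distinct leftmost sections $s\neq s'$ have $T(s)\neq T(s')$ — two distinct leftmost sections differ in value on some leaf $\lambda$, and since the map $U_\lambda$ has the gap $V$ of $I_\lambda$ over any ideal point, distinct values $s(\lambda)\neq s'(\lambda)$ on the \emph{core} of $U_\lambda$ force the intervals $V_{s(\lambda)}$ and $V_{s'(\lambda)}$ to have disjoint interiors, and one then checks the selected endpoints $T^*$ picks out are genuinely different. From injectivity on the dense set $LS$ plus order-preservation, injectivity of the extension is forced.

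The main obstacle I anticipate is the injectivity argument at the level of $LS$, specifically ruling out that two distinct leftmost sections agree on \emph{every} leaf. The definition of $\Su_{\mathrm{left}}$ already quotients out sections with identical images, but two pointed leftmost sections $(s,\xi)$, $(s,\xi')$ can share the same underlying section $s$; the subtlety is the converse — that genuinely order-distinct elements of $\Su_{\mathrm{left}}$ map to distinct points of $\partial\OO$. I would handle this by noting that if $s\neq s'$ in $LS$, then there is a leaf $\lambda$ and an embedded line $\ell\ni\lambda$ in $\Lambda$ on which $s|_\ell$ and $s'|_\ell$ are distinct embedded transversals in the cylinder $E_\infty|_\ell$, so they differ on an open set of leaves in $\ell$; picking $\lambda$ where $s(\lambda),s'(\lambda)$ both lie in $\mathrm{core}(U_\lambda)$ (which is possible by the leaf pocket theorem, Theorem \ref{thm:leaf-pocket}, since marker endpoints are dense and leftmost sections through marker endpoints have their values in the core), the corresponding gaps $V_{s(\lambda)},V_{s'(\lambda)}\subset\partial\OO$ are distinct closed intervals with disjoint interiors, forcing $T(s)\ne T(s')$ via Corollary \ref{cor:intersection}. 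Once all of this is in place, a cyclic-order-preserving continuous bijection of circles is a homeomorphism, completing the proof; the $\pi_1(M)$-equivariance and the identity $U_\lambda = I_\lambda\circ T$ are then immediate from Lemma \ref{lem:well-definedness} and the evident equivariance of $T^*$, and I would defer those last two verifications to the proof of Theorem \ref{thm:homeo-of-circles} proper rather than repeat them here.
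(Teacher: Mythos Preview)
Your approach is exactly the paper's: the proof of Lemma~\ref{lem:continuous-extension} there is a single sentence citing Corollary~\ref{cor:cyclic-order-preserving}, Lemma~\ref{lem:dense-image}, and the fact that $\Su_{\mathrm{left}}$ is the order completion of $LS$, which are precisely the three ingredients you assemble. Your extended discussion of injectivity on $LS$ goes beyond what the paper supplies, but you over-engineer it --- there is no need to locate a leaf where both $s(\lambda)$ and $s'(\lambda)$ lie in $\mathrm{core}(U_\lambda)$ (and your leaf-pocket justification of that step is not actually complete); for \emph{any} $\lambda$ with $s(\lambda)\neq s'(\lambda)$ the fibers $V_{s(\lambda)}=I_\lambda^{-1}(s(\lambda))$ and $V_{s'(\lambda)}=I_\lambda^{-1}(s'(\lambda))$ are preimages of distinct points under a map and hence already disjoint, so Corollary~\ref{cor:intersection} gives $T(s)\neq T(s')$ immediately.
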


\begin{proof}
By Corollary \ref{cor:cyclic-order-preserving}, Lemma \ref{lem:dense-image} and the fact that $\Su_\mathrm{left}$ is the completion of $\QQ$, there is a unique continuous extension of $T$ to $\Su_\mathrm{left}$, and the extended $T$ is a homeomorphism between $\Su_\mathrm{left}$ and $\partial\OO$.
\end{proof}

\begin{proof}[Proof of Theorem \ref{thm:homeo-of-circles}]
It suffices to show the ``moreover'' part about the map $T$ defined above. The $\pi_1(M)$-equivariance is automatic from the way we define $T$. The structure maps are intertwined by $T$ because of Lemma \ref{lem:well-definedness}.
\end{proof}

It is also possible to define the universal circle from rightmost sections $\Su_{\mathrm{right}}$, by considering the completion of rightmost sections (i.e. the sections of $E_\infty$ that go rightmost-up and leftmost-down). In general, there is no reason to expect $\Su_\mathrm{left}=\Su_\mathrm{right}$. However, we have the following corollary of Theorem \ref{thm:main}.

\begin{cor}
Under the assumption of Theorem \ref{thm:main}, the universal circles $\Su_\mathrm{left}$ and $\Su_\mathrm{right}$ are isomorphic.
\end{cor}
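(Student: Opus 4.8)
The plan is to deduce this directly from Theorem \ref{thm:main}, together with the observation that the construction of $\Su_\mathrm{right}$ is completely symmetric to that of $\Su_\mathrm{left}$. By Theorem \ref{thm:homeo-of-circles}, $\Su_\mathrm{left}$ is isomorphic, as a universal circle in the sense of Definition \ref{def:universal-circle}, to $(\partial\OO,\{I_\lambda\})$. Since isomorphism of universal circles is visibly an equivalence relation, it suffices to exhibit an isomorphism between $\Su_\mathrm{right}$ and $(\partial\OO,\{I_\lambda\})$ as well; the composition of the two isomorphisms, with one of them inverted, will then be the desired $\pi_1(M)$-equivariant homeomorphism $\Su_\mathrm{left}\to\Su_\mathrm{right}$ intertwining the two families of structure maps.

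To obtain $\Su_\mathrm{right}\cong(\partial\OO,\{I_\lambda\})$, I would rerun the argument of Sections \ref{sec:universal-circle} and \ref{sec:homeomorphism} with the roles of ``left'' and ``right'' interchanged throughout. The markers, the local description of $E_\infty$ near type-$0$ leaves and near product regions, and the classification of sides of shadows into stable and unstable are untouched by this exchange; what changes is only that the analogues of Lemma \ref{lem:U-gaps}, Lemma \ref{lem:0-to-1} and Remark \ref{rmk} now attain the \emph{opposite} endpoint of each relevant gap, since a rightmost section obeys the rightmost-up and leftmost-down rules. Accordingly I would define $T^*_\mathrm{right}$ on a pointed rightmost section $(s,\xi)$ by $T^*_\mathrm{right}(s,\xi)=V_\xi$ when $V_\xi$ is a singleton, $T^*_\mathrm{right}(s,\xi)=\partial_r V_\xi$ when $V_\xi$ is an unstable gap, and $T^*_\mathrm{right}(s,\xi)=\partial_l V_\xi$ when $V_\xi$ is a stable gap. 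With this definition, the analogue of Lemma \ref{lem:entering-exiting-gap}, and then Lemma \ref{lem:well-definedness}, Corollary \ref{cor:intersection}, Corollary \ref{cor:pointed-descend}, Corollary \ref{cor:cyclic-order-preserving}, Lemma \ref{lem:dense-image} and Lemma \ref{lem:continuous-extension} all carry over verbatim, producing a $\pi_1(M)$-equivariant, cyclic-order-preserving homeomorphism $T_\mathrm{right}\colon\Su_\mathrm{right}\to\partial\OO$ with $I_\lambda\circ T_\mathrm{right}=U^{\mathrm{right}}_\lambda$ for every leaf $\lambda$ of $\wt\FF$, where $U^{\mathrm{right}}_\lambda$ denotes the structure maps of $\Su_\mathrm{right}$, which is a universal circle by the evident symmetric version of Theorem \ref{thm:leftmost-universal-circle}. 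Hence $T_\mathrm{right}$ is an isomorphism of universal circles $\Su_\mathrm{right}\cong(\partial\OO,\{I_\lambda\})$, and $T_\mathrm{right}^{-1}\circ T$ is the required isomorphism $\Su_\mathrm{left}\cong\Su_\mathrm{right}$.

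The only genuinely delicate point is the bookkeeping of the left/right exchange, and in particular verifying that $T_\mathrm{right}$ preserves, rather than reverses, the cyclic order. The resolution is that the exchange is applied twice --- once in the construction of rightmost sections and once in the definition of $T^*_\mathrm{right}$ --- so the two reversals cancel. Moreover, whether a gap is declared stable or unstable in the analogue of Lemma \ref{lem:entering-exiting-gap} is dictated solely by the dynamics of $\wt\phi$ and the no-perfect-fits hypothesis, both of which are insensitive to the exchange, so that classification is literally the same; only which endpoint of the gap is attained flips, and this flip is matched precisely by the modified definition of $T^*_\mathrm{right}$. Granting this consistency check, the corollary follows.
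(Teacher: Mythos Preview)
Your proposal is correct and follows essentially the same approach as the paper's proof, which simply asserts that the argument of Theorem \ref{thm:main} applies verbatim to $\Su_\mathrm{right}$ to yield $\Su_\mathrm{right}\cong\partial\OO$, whence $\Su_\mathrm{left}\cong\Su_\mathrm{right}$. Your version is more detailed in spelling out the left/right swap in the definition of $T^*_\mathrm{right}$ and in the analogues of Lemma \ref{lem:0-to-1} and Remark \ref{rmk}, but the underlying idea is identical.
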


\begin{proof}
Using the same proof of Theorem \ref{thm:main}, it can be shown that $\Su_\mathrm{right}$ is isomorphic to $\partial\OO$, hence isomorphic to $\Su_\mathrm{left}$.
\end{proof}

\section{Invariant laminations}\label{sec:invariant-laminations}

We conclude with a discussion of the invariant laminations on $\partial\OO\cong\Su_\mathrm{left}$. See Section \ref{sec:preliminaries} for a discussion about laminations on a circle.

Any $\lambda\in\Lambda$ separates $\Lambda$ into two components, the one $\Lambda^+(\lambda)$ containing the flow positive side of $\lambda$ and the one $\Lambda^-(\lambda)$ containing the flow negative side of $\lambda$. The leaf $\lambda$ also separates $\wt{M}$ into two parts, denoted by $\wt{M}^+(\lambda)$ and $\wt{M}^-(\lambda)$ with the same sign convention. Define a subset $\Xi^\pm(\lambda)$ of $\mathrm{Symm}_2(\partial\OO)$ on by
\[
\Xi^\pm(\lambda)=\partial\mathrm{CH}\big(\bigcup_{\mu\in\Lambda^\pm(\lambda)}\mathrm{core}(I_\mu)\big).
\]
The set $\Xi^\pm$ are then defined as
\[
\Xi^\pm=\overline{\bigcup_{\lambda\in\Lambda}\Xi^\pm(\lambda)}.
\]
It is proved in \cite{Calegari2002PromotingEL} that $\Xi^\pm$ is indeed a pair of $\pi_1(M)$-invariant laminations on $\partial\OO$.

We are now ready to prove Theorem \ref{thm:invariant-laminations}. Recall that $\mathcal{L}_\OO^{s/u}$ is the lamination on $\partial\OO$ induced by $\FF_\OO^{s/u}$. 

\begin{proof}[Proof of Theorem \ref{thm:invariant-laminations}]
Take any type-0 leaf $\lambda$ and consider the shadow $p(\lambda)$. Since $\lambda$ is not a fiber, \cite[Proposition 4.6]{Fenley1999823} shows that $\partial p(\lambda)$ has both leaves in $\FF_\OO^u$ and $\FF_\OO^s$. Suppose that $e$ is a side of $p(\lambda)$ that is contained in a leaf of $\FF_\OO^s$ and consider the leftmost section $s$ starting from $Q_\lambda(e)$. If $\mu$ is a leaf in $\Lambda^+(\lambda)$, we take a path in $\Lambda^*$ from $\lambda$ to $\mu$ and track the closed interval $V_i$ as in the proof of Lemma \ref{lem:well-definedness}. The interval $V_0$ is the stable gap of $I_\lambda$ facing $e$, and the proof of Lemma shows \ref{lem:well-definedness} that $V_i$ is monotone increasing along the path. This means the shadow of $\mu$ is on the same side of $e$ as $p(\lambda)$. The side $e$, viewed as an element of $\mathrm{Symm}_2(\partial\OO)$, is therefore in $\Xi^+(\lambda)$, hence in $\Xi^+$. By transitivity of $\phi$, every leaf of $\FF^s$ is dense in $M$. This implies that the $\pi_1(M)$-image of $\partial e$ is dense in $\mathcal{L}_\OO^s$ because $\phi$ has no perfect fits. Since both $\Xi^+$ and $\FF_\OO^s$ are $\pi_1(M)$-invariant and closed, we have $\Xi^+\supset\mathcal{L}_\OO^s$.

If $\Xi^+-\mathcal{L}_\OO^s$ is not empty, the difference must be a union of diagonals of complementary regions of $\mathcal{L}_\OO^s$. Note that these diagonals cannot be approximated by leaves in $\mathcal{L}_\OO^s\cup\mathcal{L}_\OO^u$, so there must be such a diagonal $d$ in $\Xi^+(\lambda)$ for some $\lambda$. The corresponding complementary polygon comes from a singular leaf $l$ of $\FF_\OO^s$, and we denote the singularity in $l$ by $s$.

Suppose $d$ has endpoints $\xi$ and $\xi'$. Then there is a sequence of leaves $\lambda_n\in\Lambda^+(\lambda)$, sides $e_n$ of $p(\lambda_n)$ and endpoints $\xi_n$ of $e_n$ so that $\xi_n$ converges to $\xi$. Up to taking a subsequence, we can assume that all $e_n$ are contained in leaves of $\FF_\OO^s$ or leaves of $\FF_\OO^u$. If all $e_n$ are contained in leaves of $\FF_\OO^u$, then $d$ cannot be a boundary component of the convex hull of $\bigcup_{\mu\in\Lambda^\pm(\lambda)}\mathrm{core}(I_\mu)$, because $e_n$ will eventually crosses $d$ by the absence of perfect fits. So all $e_n$ are contained in leaves of $\FF_\OO^s$. In particular, the singularity $s$ can be approximated by points in shadows of leaves in $\Lambda^+(\lambda)$. We will show that this is impossible, a contradiction.

If $s$ is in $p(\lambda)$, then there are points of $\mathrm{core}(I_\lambda)$ on both sides of $d$, contradicting the assumption that $d\in\Xi^+(\lambda)$. Therefore, the $\wt{\phi}$-orbit $p^{-1}(s)$ is disjoint from $\lambda$. If $p^{-1}(s)$ is contained in $\wt{M}^+(\lambda)$, it contradicts our assumption that $d\in\Xi^+(\lambda)$ by a similar reason. So $p^{-1}(s)$ is contained in $\wt{M}^-(\lambda)$. Note that $s$ is not in $\partial p(\lambda)$, otherwise a face of $l$ will be a side of $p(\lambda)$, and the face will be a leaf of $\Xi^+(\lambda)$ as we showed above. This again contradict the assumption that $d\in\Xi^+(\lambda)$. So orbits close enough to $p^{-1}(s)$ will stay in $\wt{M}^-(\lambda)$, giving the desired contradiction.

Therefore, $\Xi^+$ must be the same as $\mathcal{L}_\OO^s$. By the same reason we have $\Xi^-=\FF_\OO^u$, finishing the proof of Theorem \ref{thm:invariant-laminations}.
\end{proof}

\bibliographystyle{myplain}
\bibliography{refs}

\end{document}